\documentclass[reqno]{amsart}
\usepackage{mathrsfs}
\usepackage{color}
\usepackage{amsmath}
\usepackage{amsfonts}
\usepackage{amssymb}
\usepackage{graphicx}
\usepackage{hyperref}


 \newtheorem{Theorem}{Theorem}[section]
 \newtheorem{Corollary}[Theorem]{Corollary}
 \newtheorem{Lemma}[Theorem]{Lemma}
 \newtheorem{Proposition}[Theorem]{Proposition}

 \newtheorem{Definition}[Theorem]{Definition}

 \newtheorem{Remark}[Theorem]{Remark}

 \newtheorem{Example}[Theorem]{Example}
 \numberwithin{equation}{section}


\begin{document}

\title[On the multipoled global Zhou weights and Zhou numbers]
 {On the multipoled global Zhou weights and semi-continuity for Zhou numbers}

\author{Shijie Bao}
\address{Shijie Bao: Institute of Mathematics, Academy of Mathematics and Systems Science, Chinese Academy of Sciences, Beijing 100190, China.}
\email{bsjie@amss.ac.cn}

\author{Qi'an Guan}
\address{Qi'an Guan: School of
Mathematical Sciences, Peking University, Beijing 100871, China.}
\email{guanqian@math.pku.edu.cn}

\author{Zhitong Mi}
\address{Zhitong Mi: School of Mathematics and Statistics, Beijing Jiaotong University, Beijing, 100044, China.}
\email{zhitongmi@amss.ac.cn}

\author{Zheng Yuan}
\address{Zheng Yuan: Institute of Mathematics, Academy of Mathematics and Systems Science, Chinese Academy of Sciences, Beijing 100190, China.}
\email{yuanzheng@amss.ac.cn}

\thanks{}

\subjclass[2020]{Primary: 32U35 Secondary: 14B05 32U15 32U25}

\keywords{multipoled global Zhou weight, Zhou number, strong openness property, semi-continuity}

\date{}

\dedicatory{}

\commby{}


\begin{abstract}
In the present paper, we give the definition and properties of the multipoled global Zhou weights. Some approximation and convergence results of multipoled global Zhou weights are given. We also establish a semi-continuity result for the Zhou numbers.
\end{abstract}

\maketitle
\section{Introduction}

\subsection{Background}

Measuring the singularity of a plurisubharmonic function near its pole is an important problem in several complex variables, complex geometry and algebraic geometry. Many mathematical concepts were established to do this, such as the Lelong numbers (\cite{Lel57}), Kiselman numbers (\cite{Ki87}), Lelong-Demailly numbers (\cite{Dem82}), and relative types (\cite{Rash06}). The Lelong numbers and Kiselman numbers have some good properties, called tropical multiplicativity ann tropical additivity (see \cite{CADG,Rash06}). For any plurisubharmonic function $u$ near the origin $o$ in $\mathbb{C}^n$, the Lelong number of $u$ at $o$ is defined by:
\[\nu(u,o):=\sup\big\{u\le c\log|z|+O(1) \ \text{near} \ o\big\}.\]
It can be easily checked that for any plurisubharmonic function $u$ and $v$ near $o$, the following properties hold for the Lelong number:
\begin{flalign*}
    \begin{split}
        \nu(u+v,o)=\nu(u,o)+\nu(v,o) \  &(\emph{\text{tropical multiplicativity}})\\
        \nu(\max\{u,v\},o)=\min\{\nu(u,o),\nu(v,o)\} \ &(\emph{\text{tropical additivity}}).
    \end{split}
\end{flalign*}

However, the Lelong numbers and Kiselman numbers are still very rough for measuring the singularities of plurisubharmonic functions. Meanwhile, in general, the Lelong-Demailly numbers do not satisfy tropical additivity, and the relative types do not satisfy tropical multiplicativity.

The relative types were introduced by Rashkovskii, which measure the plurisubharmonic functions by the maximal weights. More precisely, let $\varphi$ be a \emph{maximal weight} near $o$, i.e., there exists an open neighborhood $U$ of $o$ such that $(dd^c\varphi)^n\equiv 0$ on $U\setminus\{o\}$, then for any plurisubharmonic function $u$ near $o$, the \emph{relative type} of $u$ to $\varphi$ is defined as:
\[\sigma_{o}(u,\varphi):=\sup\big\{c\ge 0 : u\le c\varphi+O(1) \ \text{near} \ o \big\}.\]
The maximality of $\varphi$ verifies that $u\le\sigma_o(u,\varphi)\varphi+O(1)$ near $o$ (see \cite{Rash06}).

Since the relative types to maximal weights are not tropically multiplicative in general, it may be better to narrow the range of maximal weights considered. For this purpose, \cite{BGMY23} introduced a class of tame maximal weights, called \emph{Zhou weights}, including the local version and global version, whose relative types satisfy both the tropical multiplicativity ann tropical additivity (see Corollary \ref{cor-tropical.multi.additi}). The relative types to the Zhou weights are also called \emph{Zhou numbers} specifically. We recall the definitions of the local Zhou weights and global weights here, and a more detailed review of them are given in Section \ref{sec-preliminaries}.

Let $f_{0}=(f_{0,1},\cdots,f_{0,m})$ be a vector,
where $f_{0,1},\cdots,f_{0,m}$ are holomorphic functions near the origin $o\in\mathbb{C}^n$. Denote $|f_{0}|^{2}=|f_{0,1}|^{2}+\cdots+|f_{0,m}|^{2}$. Let $\varphi_{0}$ be a plurisubharmonic function near $o$,
such that $|f_{0}|^{2}e^{-2\varphi_{0}}$ is integrable near $o$.

\begin{Definition}[Definition 1.2 in \cite{BGMY23}]\label{def-max.relat.in.intro}
    We call that $\phi^{f_0,\varphi_0}_{o,\max}$ ($\phi_{o,\max}$ for short) is a \textbf{local Zhou weight related to $|f_{0}|^{2}e^{-2\varphi_{0}}$ near $o$},
    if the following three statements hold:
    
    (1) $|f_{0}|^{2}e^{-2\varphi_{0}}|z|^{2N_{0}}e^{-2\phi_{o,\max}}$ is integrable near $o$ for large enough $N_{0}\gg 0$;
    
    (2) $|f_{0}|^{2}e^{-2\varphi_{0}}e^{-2\phi_{o,\max}}$
    is not integrable near $o$;
    
    (3) for any plurisubharmonic function $\varphi'\geq\phi_{o,\max}+O(1)$ near $o$ such that $|f_{0}|^{2}e^{-2\varphi_{0}}e^{-2\varphi'}$
    is not integrable near $o$, $\varphi'=\phi_{o,\max}+O(1)$ holds.
\end{Definition}

The global Zhou weights were defined to verify that any local Zhou weight can be replaced by a continuous and maximal plurisubharmonic function with the equivalent singularity. For the definition, first, let $D$ be a domain in $\mathbb{C}^n$ with $o\in D$. Addtionally, let $f_0$ and $\varphi_0$ be a holomorphic vector and a plurisubharmonic function near $o$ respectively as before the definition of the local Zhou weight. 
	\begin{Definition}[Definition 1.15 in \cite{BGMY23}]\label{def-global.Zhou.weight.in.intro}
		We call that a negative plurisubharmonic function $\Phi^{f_0,\varphi_0,D}_{o,\max}$ ($\Phi^{D}_{o,\max}$ for short) on $D$ a \textbf{global Zhou weight related to $|f_0|^2e^{-2\varphi_0}$ on $D$} if the following statements hold:
		
		$(1)$ $|f_0|^2e^{-2\varphi_0}|z|^{2N_0}e^{-2\Phi^{D}_{o,\max}}$ is integrable near $o$ for large enough $N_0\gg 0$;
		
		$(2)$ $|f_0|^2e^{-2\varphi_0-2\Phi^{D}_{o,\max}}$ is not integrable near $o$;
		
		$(3)$ for any negative plurisubharmonic function $\tilde\varphi$ on $D$ satisfying that $\tilde\varphi\ge\Phi^{D}_{o,\max}$ on $D$ and $|f_0|^2e^{-2\varphi_0-2\tilde\varphi}$ is not integrable near $o$, $\tilde\varphi=\Phi^{D}_{o,\max}$ holds on $D$.
	\end{Definition}

    The construction of the Zhou weights in \cite{BGMY23} relies on the strong openness property of multiplier ideal sheaves. For any plurisubharmonic function $u$ near $z\in\mathbb{C}^n$, the stalk of the multiplier ideal sheaf $\mathcal{I}(u)$ at $z$ is defined as (see \cite{Na90, AMAG}):
    \[\mathcal{I}(u)_{z}:=\big\{(f,z)\in\mathcal{O}_z : |f|^2e^{-2u} \ \text{is integrable near} \ z\big\}.\]
    The strong openness property (\cite{GZ15a}) says that
    \[\mathcal{I}(u)=\mathcal{I}_+(u):=\bigcup_{p>1}\mathcal{I}(pu).\]
    More detailed statements of the strong openness property can be seen in Section \ref{subsec-required.lemma}.

It is verified in \cite{BGMY23} that the global Zhou weights are generalizations of the pluricomplex Green functions on hyperconvex domain, while the multipoled pluricomplex Green functions are on the another direction to generalize the pluricomplex Green function, which have several distinct poles on hyperconvex domains.

Let $D$ be a hyperconvex domain, and $\mathbf{Z}:=\{\mathbf{z}_1,\ldots,\mathbf{z}_p\}$ be $p$ distinct points in $D$, $p\in\mathbb{N}_+$. The \emph{multipoled pluricomplex Green function} $G_{D,(\mathbf{z}_1,\ldots,\mathbf{z}_p)}$ ($G_{D,\mathbf{Z}}$ for simplicity) on $D$ with the poles $\mathbf{z}_1,\ldots,\mathbf{z}_p$ is defined by:
\begin{Definition}[multipoled pluricomplex Green function, \cite{Lel89}]\label{def-multipoled.Green.function}
    For any $z\in D$,
    \begin{align*}
        G_{D,(\mathbf{z}_1,\ldots,\mathbf{z}_p)}(z):=\sup\big\{\phi(z) : \phi\in \mathrm{PSH}^-(D), \ \text{and} \ \phi(\cdot)\le \log|\cdot-\mathbf{z}_i|+O(1)& \\
        \text{near any } \mathbf{z}_i, \ i=1,\ldots, p&\big\}.
    \end{align*}
\end{Definition}

It is well-known that the multipoled pluricomplex Green function satisfies:

\emph{$e^{G_{D,\mathbf{Z}}}$ is continuous on $D$, $G_{D,\mathbf{Z}}$ is exhaustive on $D$, $G_{D,\mathbf{Z}}$ is maximal on $D\setminus\mathbf{Z}$, and
\[G_{D,\mathbf{Z}}(z)=\log|z-\mathbf{z}_i|+O(1) \ \text{near} \ \mathbf{z}_i, \ \forall i=1,\ldots,p.\]}

The multipoled pluricomplex Green functions inspire us to consider the multipoled global Zhou weights on hyperconvex domains.

We give the definition and basic properties of multipoled global Zhou weights on hyperconvex domains in the present paper, including generalizing the properties of multipoled pluricomplex Green functions mentioned above. We also consider the convergence of the multipoled global Zhou weights when the poles move, where the corresponding result of pluricomplex Green functions were proved by Demailly \cite{Dem87b}.   

The celebrated Siu's semi-continuity theorem (\cite{Siu74}) of Lelong numbers is important and widely used. As generalizations of Lelong numbers, the corresponding semi-continuity result of Zhou numbers is worth of being established. We will prove a semi-continuity result of Zhou numbers in the present paper.

\subsection{Multipoled global Zhou weights}\label{subsec.basic.property}
Let $D$ be a hyperconvex domain in $\mathbb{C}^n$. Denote by $\mathrm{PSH}^-(D)$ the set of all negative plurisubharmonic functions on $D$. Suppose we are given a priori of data
\[\mathscr{P}=\bigg(\{\mathbf{z}_i\}_{i=1}^p, \{\mathbf{f}_{0,i}\}_{i=1}^p, \{u_{0,i}\}_{i=1}^p\bigg)\]
for some $p\in\mathbb{N}_+$, which satisfies:

(a) $\mathbf{z}_i$ ($1\le i\le p$) are distinct points in $D$;

(b) $\mathbf{f}_{0,i}=(f_{0,i,1},\ldots,f_{0,i,k_i})$ is a holomorphic vector near $\mathbf{z}_i$ for any $i$, where $f_{0,i,j}\in \mathcal{O}_{\mathbf{z}_i}$, $\forall 1\le i\le p$, $1\le j\le k_{i}$;

(c) $u_{0,i}$ is a plurisubharmonic function near $\mathbf{z}_i$ such that $|\mathbf{f}_{0,i}|^2e^{-2u_{0,i}}$ is $L^1$ integrable near $\mathbf{z}_i$ for any $i$, where $|\mathbf{f}_{0,i}|^2:=|f_{0,i,1}|^2+\cdots+|f_{0,i,k_i}|^2$.

\begin{Definition}\label{def-Zhou.maximal.weight}
We call $\Phi^D_{\mathscr{P},\max}\in\mathrm{PSH}^-(D)$ a \textbf{global Zhou weight related to $\mathscr{P}$ on $D$}, if the following statements hold:

(1) for sufficiently large $N_1, \ldots, N_p>0$,
\[|\mathbf{f}_{0,i}|^2e^{-2u_{0,i}}|z-\mathbf{z}_i|^{2N_i}e^{-2\Phi_{\mathscr{P},\max}^D}\]
is integrable near $\mathbf{z}_i$ for any $i=1,\ldots,p$;

(2) $|\mathbf{f}_{0,i}|^2e^{-2u_{0,i}-2\Phi^D_{\mathscr{P},\max}}$ is not integrable near $\mathbf{z}_i$ for any $i=1,\ldots,p$;

(3) for any $\Psi\in\mathrm{PSH}^-(D)$ satisfying that $\Psi\ge\Phi_{\mathscr{P},\max}^D$ on $D$ and $|\mathbf{f}_{0,i}|^2e^{-2u_{0,i}-\Psi}$ is not integrable near $\mathbf{z}_i$ for any $i$, $\Psi=\Phi_{\mathscr{P},\max}^D$ holds on $D$. 
\end{Definition}

The following remark verifies the existence of multipoled global Zhou weights. The existence can also be seen by Proposition \ref{prop-local.produce.global}. 
\begin{Remark}\label{rem-existence}
    Assume that there exists $\Phi\in\mathrm{PSH}^-(D)$ such that for sufficiently large $N_1,\ldots, N_p>0$,
    \[|\mathbf{f}_{0,i}|^2e^{-2u_{0,i}}|z-\mathbf{z}_i|^{2N_i}e^{-2\Phi}\]
    is integrable near $\mathbf{z}_i$, and $(\mathbf{f}_{0,i},\mathbf{z}_i)\notin\mathcal{I}(u_{0,i}+\Phi)_{\mathbf{z}_i}$ for any $i=1,\ldots,p$. Then there exists a global Zhou weight $\Phi_{\mathscr{P},\max}^D$ on $D$ related to $\mathscr{P}$ such that $\Phi_{\mathscr{P},\max}^D\ge \Phi$ on $D$.

    Moreover, there exists $N>0$ such that
    \begin{equation}\label{eq-Phi.ge.NG_D}
    \Phi_{\mathscr{P},\max}^D(z)\ge NG_{D,(\mathbf{z}_1,\ldots,\mathbf{z}_p)}(z), \ \forall z\in D,
    \end{equation}
    where $G_{D,(\mathbf{z}_1,\ldots,\mathbf{z}_p)}(z)$ is the multipoled pluricomplex Green function on $D$ with the poles $\mathbf{z}_1,\ldots,\mathbf{z}_p$ (see Definition \ref{def-multipoled.Green.function}).
\end{Remark}

Near each pole, the multipoled is also a local Zhou weight. The following proposition shows this fact.

\begin{Proposition}\label{prop-global.is.local}
Let $\Phi_{\mathscr{P},\max}^D$ be a global Zhou weight related to the priori $\mathscr{P}$ on $D$, then $\Phi_{\mathscr{P},\max}^D$ is a local Zhou weight related to $|\mathbf{f}_{0,i}|^2e^{-2u_{0,i}}$ near $\mathbf{z}_i$ for any $i=1,\ldots,p$.
\end{Proposition}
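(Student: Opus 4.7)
The plan is to verify in turn the three defining properties of a local Zhou weight at $\mathbf{z}_i$. Properties (1) and (2) of Definition \ref{def-max.relat.in.intro} are the same statements as (1) and (2) of Definition \ref{def-Zhou.maximal.weight} read at the $i$-th pole, so there is nothing to prove for them.

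The substance is condition (3). Let $\varphi'$ be a plurisubharmonic function on some open neighborhood of $\mathbf{z}_i$ with $\varphi' \geq \Phi^D_{\mathscr{P},\max} - C$ there (for a constant $C > 0$) and with $|\mathbf{f}_{0,i}|^2 e^{-2u_{0,i}-2\varphi'}$ not integrable near $\mathbf{z}_i$; I must show $\varphi' = \Phi^D_{\mathscr{P},\max} + O(1)$ near $\mathbf{z}_i$. The strategy is to globalize $\varphi'$ by a $\max$-gluing and then invoke the uniqueness clause (3) of Definition \ref{def-Zhou.maximal.weight}. Choose concentric balls $V_{r/2} \subset\subset V_r$ centered at $\mathbf{z}_i$ with $\bar V_r$ contained in the domain of $\varphi'$ and disjoint from $\{\mathbf{z}_j : j \neq i\}$. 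By Remark \ref{rem-existence} we have $\Phi^D_{\mathscr{P},\max} \geq N\, G_{D,(\mathbf{z}_1,\ldots,\mathbf{z}_p)}$ on $D$; since $G_{D,\mathbf{Z}}$ is continuous off the poles, $\Phi^D_{\mathscr{P},\max}$ is bounded below on the compact annulus $\bar V_r \setminus V_{r/2}$. Choosing $M$ sufficiently large, the inequality $\varphi' - M \leq \Phi^D_{\mathscr{P},\max}$ holds on that annulus, so the piecewise definition
\[
\Psi := \begin{cases} \max\{\varphi' - M,\ \Phi^D_{\mathscr{P},\max}\} & \text{on } V_r, \\ \Phi^D_{\mathscr{P},\max} & \text{on } D \setminus V_r \end{cases}
\]
produces a negative plurisubharmonic function on $D$ with $\Psi \geq \Phi^D_{\mathscr{P},\max}$.

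To apply Definition \ref{def-Zhou.maximal.weight} (3) to $\Psi$ I check its non-integrability at each pole. For $j \neq i$, $\Psi = \Phi^D_{\mathscr{P},\max}$ near $\mathbf{z}_j$, and (2) of the same definition supplies the non-integrability. For $j = i$, decompose $V_{r/2}$ into $A := \{\varphi' - M \geq \Phi^D_{\mathscr{P},\max}\}$, where $\Psi = \varphi' - M$, and $A^c$, where $\Psi = \Phi^D_{\mathscr{P},\max}$ and where $\Phi^D_{\mathscr{P},\max} - C \leq \varphi' \leq \Phi^D_{\mathscr{P},\max} + M$ forces $e^{-2\varphi'}$ and $e^{-2\Phi^D_{\mathscr{P},\max}}$ to differ only by the bounded factor $e^{2\max(C,M)}$. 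Since $\int_{V_{r/2}} |\mathbf{f}_{0,i}|^2 e^{-2u_{0,i}-2\varphi'}$ diverges by hypothesis, the integral over $A$ or over $A^c$ must diverge, and in either case the bounded comparisons above translate this into divergence of $\int |\mathbf{f}_{0,i}|^2 e^{-2u_{0,i}-2\Psi}$. Definition \ref{def-Zhou.maximal.weight} (3) then yields $\Psi = \Phi^D_{\mathscr{P},\max}$ on $D$, which on $V_r$ says $\varphi' - M \leq \Phi^D_{\mathscr{P},\max}$; combined with $\varphi' \geq \Phi^D_{\mathscr{P},\max} - C$, this gives $\varphi' = \Phi^D_{\mathscr{P},\max} + O(1)$ near $\mathbf{z}_i$, as required.

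The main obstacle is producing the global $\Psi$ from the local datum $\varphi'$: the standard $\max$-gluing requires a pointwise lower bound on $\Phi^D_{\mathscr{P},\max}$ on an annular region around $\mathbf{z}_i$, which is not immediate from Definition \ref{def-Zhou.maximal.weight} but is furnished by the Green-function estimate in Remark \ref{rem-existence} together with the continuity of multipoled pluricomplex Green functions off their poles. A secondary subtlety is that the non-integrability hypothesis on $\varphi'$ cannot simply be carried over to $\varphi' - M$ when $M$ is large; this is why the two-set decomposition is needed, so that on the piece $A^c$ where $\Psi$ reverts to $\Phi^D_{\mathscr{P},\max}$, $\varphi'$ is pinched in a bounded tube around $\Phi^D_{\mathscr{P},\max}$ and comparability of the exponential weights takes over.
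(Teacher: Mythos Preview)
Your argument is correct, but it takes a genuinely different route from the paper's. The paper proves Proposition~\ref{prop-global.is.local} by way of an auxiliary lemma (Lemma~\ref{lem-sigma.le.PhiP.le.min}): for each $i$ it builds a single-poled global Zhou weight $\Phi_{D,\mathbf{z}_i}$ dominating $\Phi^D_{\mathscr{P},\max}$, proves the sandwich $\sum_i \Phi_{D,\mathbf{z}_i}\le \Phi^D_{\mathscr{P},\max}\le \min_i \Phi_{D,\mathbf{z}_i}$, and in particular $\Phi^D_{\mathscr{P},\max}=\Phi_{D,\mathbf{z}_i}+O(1)$ near $\mathbf{z}_i$; the single-pole result (Proposition~\ref{prop-global.is.local.single.pole}) then finishes. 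Your approach bypasses this machinery entirely: you verify condition~(3) of the local definition directly by a $\max$-gluing of the local competitor $\varphi'$ against $\Phi^D_{\mathscr{P},\max}$, using only the Green-function lower bound of Remark~\ref{rem-existence} to make the gluing work. This is more elementary and self-contained for the proposition at hand, while the paper's detour through Lemma~\ref{lem-sigma.le.PhiP.le.min} pays off because the sandwich inequality is reused for Propositions~\ref{prop-maximal} and~\ref{prop-continuous.exhaustive}.

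One small simplification: your two-set decomposition at $\mathbf{z}_i$ is unnecessary. Since $\Phi^D_{\mathscr{P},\max}\le \varphi'+C$ and $\varphi'-M\le \varphi'\le \varphi'+C$, you get $\Psi=\max\{\varphi'-M,\Phi^D_{\mathscr{P},\max}\}\le \varphi'+C$ on $V_r$ directly, hence $e^{-2\Psi}\ge e^{-2C}e^{-2\varphi'}$, and non-integrability at $\mathbf{z}_i$ follows in one line. Also, when choosing $M$ be sure it is large enough that $\varphi'-M<0$ on $\bar V_r$ (so that $\Psi$ is genuinely negative); this is implicit in ``sufficiently large'' but worth stating.
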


The following proposition states that we can produce a global Zhou weight with several specific poles by some local Zhou weights at each pole.

\begin{Proposition}\label{prop-local.produce.global}
    Let $\phi_{\mathbf{z}_i,\max}$ be a local Zhou weight related to some $|\mathbf{f}_{0,i}|^2e^{-2u_{0,i}}$ near $\mathbf{z}_i$ for any $i=1,\ldots,p$. Then the function
    \begin{flalign*}
        \begin{split}
            \Phi_{\mathscr{P},\max}^D(z):=\sup\big\{\phi(z) :\ &\phi\in\mathrm{PSH}^-(D), \ (\mathbf{f}_{0,i},\mathbf{z}_i)\notin\mathcal{I}(u_{0,i}+\phi)_{\mathbf{z}_i},\\
            &\text{and} \ \phi\ge \phi_{\mathbf{z}_i,\max}+O(1) \ \text{near} \ \mathbf{z}_i, \ \forall i=1,\ldots,p \big\},
        \end{split}
        \end{flalign*}
is a global Zhou weight on $D$ related to
\[\mathscr{P}=\bigg(\{\mathbf{z}_i\}_{i=1}^p, \{\mathbf{f}_{0,i}\}_{i=1}^p, \{u_{0,i}\}_{i=1}^p\bigg).\]
Moreover, for any $i\in\{1,\ldots,p\}$, the function
\[\Phi_{\mathscr{P},\max}^D=\phi_{\mathbf{z}_i,\max}+O(1) \ \text{near} \ \mathbf{z}_i.\]
\end{Proposition}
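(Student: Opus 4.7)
The plan is to verify that $\Phi:=\Phi_{\mathscr{P},\max}^D$ satisfies the moreover clause first, from which the three conditions of Definition \ref{def-Zhou.maximal.weight} follow.

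First I would check that the defining family is nonempty and that, after passing to the upper semicontinuous regularization if necessary, $\Phi$ is plurisubharmonic and negative on $D$. A convenient admissible candidate is $\phi_0:=\sum_{i=1}^{p}\Phi_i$, where each $\Phi_i$ is a single-pole global Zhou weight on $D$ equivalent to $\phi_{\mathbf{z}_i,\max}$ near $\mathbf{z}_i$, obtained by the single-pole construction of \cite{BGMY23}. Near each $\mathbf{z}_i$, the summands $\Phi_j$ with $j\ne i$ are bounded, so $\phi_0=\phi_{\mathbf{z}_i,\max}+O(1)$ near $\mathbf{z}_i$, and the non-integrability $(\mathbf{f}_{0,i},\mathbf{z}_i)\notin\mathcal{I}(u_{0,i}+\phi_0)_{\mathbf{z}_i}$ follows from the analogous property of $\Phi_i$. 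Choquet's lemma then gives $\Phi^{*}\in\mathrm{PSH}^{-}(D)$ with $\Phi^{*}=\Phi$ outside a pluripolar set, and I will work with $\Phi^{*}$, still denoted $\Phi$.

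The central step is the moreover statement $\Phi=\phi_{\mathbf{z}_i,\max}+O(1)$ near each $\mathbf{z}_i$. The lower bound is immediate from $\Phi\ge\phi_0$. For the matching upper bound, the strategy is to show $\Phi$ itself lies in the defining family, i.e., $|\mathbf{f}_{0,i}|^2e^{-2(u_{0,i}+\Phi)}$ is not integrable near $\mathbf{z}_i$ for every $i$; property~(3) of the local Zhou weight $\phi_{\mathbf{z}_i,\max}$ in Definition \ref{def-max.relat.in.intro} then forces $\Phi=\phi_{\mathbf{z}_i,\max}+O(1)$ near $\mathbf{z}_i$. I expect to obtain the required non-integrability by contradiction using the strong openness property of \cite{GZ15a}: if $|\mathbf{f}_{0,i}|^2e^{-2(u_{0,i}+\Phi)}$ were integrable near $\mathbf{z}_i$, strong openness would upgrade this to integrability of $|\mathbf{f}_{0,i}|^2e^{-2p(u_{0,i}+\Phi)}$ for some $p>1$, providing a quantitative margin that can be compared, via H\"older's inequality on a neighborhood of $\mathbf{z}_i$, with members $\phi$ of the defining family that approximate $\Phi$ from below; this will clash with the non-integrability built into the family. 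This strong openness step is the principal obstacle, but the pole-by-pole locality of the non-integrability condition should allow the single-pole argument of \cite{BGMY23} to pass through at each $\mathbf{z}_i$ independently.

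Once the moreover clause is in hand, the three conditions of Definition \ref{def-Zhou.maximal.weight} drop out quickly. Condition~(1) is transported from the integrability of $|\mathbf{f}_{0,i}|^2e^{-2u_{0,i}}|z-\mathbf{z}_i|^{2N_i}e^{-2\phi_{\mathbf{z}_i,\max}}$ for large $N_i$, which is part of being a local Zhou weight, via $\Phi=\phi_{\mathbf{z}_i,\max}+O(1)$ near $\mathbf{z}_i$. Condition~(2) is exactly the admissibility of $\Phi$ established in the previous step. For condition~(3), any $\Psi\in\mathrm{PSH}^{-}(D)$ with $\Psi\ge\Phi$ on $D$ and the non-integrability at every pole automatically satisfies $\Psi\ge\Phi\ge\phi_{\mathbf{z}_i,\max}+O(1)$ near $\mathbf{z}_i$, so $\Psi$ itself belongs to the defining family; hence $\Psi\le\Phi$ by the sup definition, forcing $\Psi=\Phi$ on $D$.
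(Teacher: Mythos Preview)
Your overall strategy---show that the supremum $\Phi$ (after upper regularization) itself belongs to the defining family, then read off the three conditions of Definition~\ref{def-Zhou.maximal.weight}---is sound and in fact more direct than the paper's route. The paper instead first invokes Remark~\ref{rem-existence} (Zorn's lemma together with Lemma~\ref{lem-effect_GZ}) to produce an abstract multipoled global Zhou weight $\Psi_{\mathscr{P},\max}^D\ge\sum_i\Psi_{\mathbf{z}_i}^D$, and only afterwards identifies it with the supremum by showing that every $\phi$ in the family satisfies $\max\{\Psi_{\mathscr{P},\max}^D,\phi\}=\Psi_{\mathscr{P},\max}^D$.

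However, your central step---establishing $(\mathbf{f}_{0,i},\mathbf{z}_i)\notin\mathcal{I}(u_{0,i}+\Phi^*)_{\mathbf{z}_i}$ via strong openness and H\"older---does not work as sketched. Every competitor $\phi$ satisfies $\phi\le\Phi^*$, so $e^{-2(u_{0,i}+\phi)}\ge e^{-2(u_{0,i}+\Phi^*)}$; the margin $p>1$ from strong openness points in the wrong direction to deduce integrability of $|\mathbf{f}_{0,i}|^2e^{-2(u_{0,i}+\phi)}$ from that of $|\mathbf{f}_{0,i}|^2e^{-2p(u_{0,i}+\Phi^*)}$ by H\"older, since $\Phi^*-\phi$ is not bounded near the pole. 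The alternative---an increasing approximating sequence inside the family so that Lemma~\ref{lem-effect_GZ} and Remark~\ref{rem-effect_GZ} apply---requires the family to be closed under finite maxima, but non-integrability is not obviously preserved under $\max$, so this needs an additional input as well.

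The clean fix is already in your hands: every $\phi$ in the defining family satisfies the single-pole admissibility conditions at $\mathbf{z}_i$, so $\phi\le\Phi_i$ on all of $D$ by Proposition~\ref{prop-local.produce.global.single.pole}. Taking the supremum and regularizing gives $\Phi^*\le\Phi_i$ on $D$, and since $(\mathbf{f}_{0,i},\mathbf{z}_i)\notin\mathcal{I}(u_{0,i}+\Phi_i)_{\mathbf{z}_i}$, the same holds for $\Phi^*$. This one inequality simultaneously delivers the non-integrability at each pole, shows the family is closed under maxima (whence $\Phi^*=\Phi$), and gives the upper bound in the moreover clause---all without appealing to strong openness at this stage. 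This is precisely the pointwise bound the paper exploits, just inserted earlier in the argument.
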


Combining Proposition \ref{prop-global.is.local} and Proposition \ref{prop-local.produce.global}, we can get that the following corollary holds.
\begin{Corollary}
    Let $\Phi_{\mathscr{P},\max}^D$ be a global Zhou weight related to the priori $\mathscr{P}$ on $D$, then for any $z\in D$,
    \begin{flalign*}
        \begin{split}
            \Phi_{\mathscr{P},\max}^D(z)=\sup\big\{\phi(z) : \ &\phi\in\mathrm{PSH}^-(D), \ (\mathbf{f}_{0,i},\mathbf{z}_i)\notin\mathcal{I}(u_{0,i}+\phi)_{\mathbf{z}_i},\\
            &\text{and} \ \phi\ge\Phi_{\mathscr{P},\max}^D+O(1) \ \text{near} \ \mathbf{z}_i, \ \forall i=1,\ldots,p \big\},
        \end{split}
        \end{flalign*}
\end{Corollary}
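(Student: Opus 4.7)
The plan is to combine the two preceding propositions, as the statement preceding the corollary suggests. Denote the right-hand side of the claimed identity by $\widetilde\Phi(z)$. The first move is to apply Proposition \ref{prop-global.is.local} to conclude that $\Phi_{\mathscr{P},\max}^D$ itself is a local Zhou weight related to $|\mathbf{f}_{0,i}|^2e^{-2u_{0,i}}$ near each $\mathbf{z}_i$. Thus one may set $\phi_{\mathbf{z}_i,\max}:=\Phi_{\mathscr{P},\max}^D$ (viewed as a germ near $\mathbf{z}_i$) in the hypothesis of Proposition \ref{prop-local.produce.global}.

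Feeding these local Zhou weights into Proposition \ref{prop-local.produce.global} yields that the function $\widetilde\Phi$ defined by the supremum on the right-hand side of the corollary is itself a global Zhou weight on $D$ related to $\mathscr{P}$. In particular, $\widetilde\Phi\in\mathrm{PSH}^-(D)$ and $|\mathbf{f}_{0,i}|^2e^{-2u_{0,i}-2\widetilde\Phi}$ is non-integrable near each $\mathbf{z}_i$.

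Next I would verify $\Phi_{\mathscr{P},\max}^D\le\widetilde\Phi$ on $D$ by showing that $\Phi_{\mathscr{P},\max}^D$ itself is admissible in the defining supremum. Negativity and plurisubharmonicity are immediate. The local lower bound $\Phi_{\mathscr{P},\max}^D\ge\Phi_{\mathscr{P},\max}^D+O(1)$ near each $\mathbf{z}_i$ is trivially satisfied. The condition $(\mathbf{f}_{0,i},\mathbf{z}_i)\notin\mathcal{I}(u_{0,i}+\Phi_{\mathscr{P},\max}^D)_{\mathbf{z}_i}$ is exactly condition $(2)$ in Definition \ref{def-Zhou.maximal.weight} for the global Zhou weight. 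Hence $\Phi_{\mathscr{P},\max}^D$ is one of the competitors, giving $\widetilde\Phi\ge\Phi_{\mathscr{P},\max}^D$.

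To close the argument, I would apply condition $(3)$ in Definition \ref{def-Zhou.maximal.weight} for $\Phi_{\mathscr{P},\max}^D$ with $\Psi:=\widetilde\Phi$: we have $\widetilde\Phi\ge\Phi_{\mathscr{P},\max}^D$ on $D$ and the required non-integrability at each $\mathbf{z}_i$ (from $\widetilde\Phi$ being a global Zhou weight), so the uniqueness clause forces $\widetilde\Phi=\Phi_{\mathscr{P},\max}^D$ on $D$. There is no real obstacle here; the only mildly delicate point is recognising that $\Phi_{\mathscr{P},\max}^D$ qualifies as a competitor in its own defining supremum, which is exactly where condition $(2)$ of Definition \ref{def-Zhou.maximal.weight} comes in.
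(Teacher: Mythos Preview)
Your argument is correct and is precisely the route the paper indicates: the corollary is stated immediately after the sentence ``Combining Proposition \ref{prop-global.is.local} and Proposition \ref{prop-local.produce.global}, we can get that the following corollary holds,'' with no further details given. Your steps---identifying $\Phi_{\mathscr{P},\max}^D$ as a local Zhou weight at each pole via Proposition \ref{prop-global.is.local}, feeding it into Proposition \ref{prop-local.produce.global} to see that the supremum $\widetilde\Phi$ is itself a global Zhou weight, and then invoking clause~(3) of Definition \ref{def-Zhou.maximal.weight} together with the trivial inequality $\widetilde\Phi\ge\Phi_{\mathscr{P},\max}^D$---are exactly the intended filling-in.
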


For any $\psi\in\mathrm{PSH}(D)$, we denote
     \[\sigma_{\mathbf{Z}}\big(\psi,\Phi_{\mathscr{P},\max}^D\big):=\min_{1\le i\le p}\sigma_{\mathbf{z}_i}\big(\psi,\Phi_{\mathscr{P},\max}^D\big),\]
where
\[\sigma_{\mathbf{z}_i}\big(\psi,\Phi_{\mathscr{P},\max}^D\big)=\sup\big\{c\ge 0 : \psi\le c\Phi_{\mathscr{P},\max}^D+O(1) \ \text{near} \ \mathbf{z}_i\big\}.\]

\begin{Proposition}\label{prop-psi.le.sigmaZ.Phi}
    Let $\Phi_{\mathscr{P},\max}^D$ be a global Zhou weight related to $\mathscr{P}$ on $D$. Then for any $\psi\in\mathrm{PSH}^-(D)$, the following inequality holds on $D$:
    \[\psi\le\sigma_{\mathbf{Z}}\big(\psi,\Phi_{\mathscr{P},\max}^D\big)\Phi_{\mathscr{P},\max}^D.\]
    
\end{Proposition}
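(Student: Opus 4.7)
The plan is to leverage the maximality condition (3) of Definition \ref{def-Zhou.maximal.weight} by feeding it a well-chosen comparison function. Set $c_0 := \sigma_{\mathbf{Z}}(\psi, \Phi_{\mathscr{P},\max}^D)$; the case $c_0 = 0$ is immediate since $\psi \le 0 = c_0\Phi_{\mathscr{P},\max}^D$, so assume $c_0 > 0$ and fix $c$ with $0 < c < c_0$. By the definition of $c_0$ as a minimum of relative types, $c < \sigma_{\mathbf{z}_i}(\psi, \Phi_{\mathscr{P},\max}^D)$ for every $i$, hence there exist constants $C_i$ and neighborhoods $U_i \ni \mathbf{z}_i$ with $\psi \le c\Phi_{\mathscr{P},\max}^D + C_i$ on $U_i$.

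Now consider the candidate
\[\tilde{\Psi} := \max\big\{\psi/c,\ \Phi_{\mathscr{P},\max}^D\big\} \quad \text{on } D.\]
Since both functions are non-positive and plurisubharmonic, $\tilde{\Psi} \in \mathrm{PSH}^-(D)$, and by construction $\tilde{\Psi} \ge \Phi_{\mathscr{P},\max}^D$ on $D$. Near each $\mathbf{z}_i$, the local estimate gives $\psi/c \le \Phi_{\mathscr{P},\max}^D + C_i/c$, so $\tilde{\Psi} \le \Phi_{\mathscr{P},\max}^D + M_i$ on $U_i$ with $M_i := \max\{C_i/c,0\} \ge 0$. Therefore
\[|\mathbf{f}_{0,i}|^2 e^{-2u_{0,i} - 2\tilde{\Psi}} \ge e^{-2M_i}\, |\mathbf{f}_{0,i}|^2 e^{-2u_{0,i} - 2\Phi_{\mathscr{P},\max}^D} \quad \text{on } U_i,\]
and the right-hand side is non-integrable near $\mathbf{z}_i$ by property (2) of the global Zhou weight, forcing the same for the left-hand side. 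All three hypotheses of property (3) are thus met, so $\tilde{\Psi} = \Phi_{\mathscr{P},\max}^D$ on $D$; unpacking the maximum yields $\psi/c \le \Phi_{\mathscr{P},\max}^D$, i.e.\ $\psi \le c\Phi_{\mathscr{P},\max}^D$ on $D$.

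Letting $c \nearrow c_0$ gives the sharp bound $\psi \le c_0\Phi_{\mathscr{P},\max}^D$ pointwise on $D$ (the limit is fine: either $\Phi_{\mathscr{P},\max}^D(z)$ is finite and $c\Phi_{\mathscr{P},\max}^D(z) \to c_0\Phi_{\mathscr{P},\max}^D(z)$, or it is $-\infty$, in which case $\psi(z) \le c\Phi_{\mathscr{P},\max}^D(z) = -\infty$ already). The main technical point is the choice of $\tilde{\Psi}$: scaling $\psi$ by $1/c$ (rather than using a shifted version such as $(\psi - M)/c$) keeps $\tilde{\Psi}$ within $O(1)$ of $\Phi_{\mathscr{P},\max}^D$ near every pole, which is exactly what is needed to transport non-integrability from property (2) to $\tilde{\Psi}$ while simultaneously giving a clean conclusion $\psi \le c\Phi_{\mathscr{P},\max}^D$ with no additive constant left over in the limit $c \to c_0$.
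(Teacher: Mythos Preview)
Your proof is correct and follows essentially the same strategy as the paper: feed the comparison function $\max\{\psi/c,\Phi_{\mathscr{P},\max}^D\}$ into condition (3) of Definition \ref{def-Zhou.maximal.weight}. The only difference is that the paper takes $c=c_0$ directly, which requires knowing that $\psi/c_0\le\Phi_{\mathscr{P},\max}^D+O(1)$ near each $\mathbf{z}_i$; for this the paper invokes Proposition \ref{prop-global.is.local} (the global Zhou weight is a local Zhou weight near each pole) together with Proposition \ref{prop-psi.le.sigma.phi.local}. Your choice $c<c_0$ makes the local $O(1)$ bound immediate from the definition of relative type, at the cost of a harmless limit $c\nearrow c_0$ at the end; this makes your argument slightly more self-contained.
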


The local boundedness and maximality of multipoled global Zhou weights are established in Proposition \ref{prop-maximal}, while Proposition \ref{prop-continuous.exhaustive} establishes the exhaustion and continuity. 

\begin{Proposition}\label{prop-maximal}
    Let $\Phi_{\mathscr{P},\max}^D$ be a global Zhou weight related to $\mathscr{P}$ on $D$. Then
    \[\Phi_{\mathscr{P},\max}^D\in L_{\mathrm{loc}}^{\infty}(D\setminus\{\mathbf{z_1},\ldots,\mathbf{z}_n\}),\]
    and
    \[\big(dd^c\Phi_{\mathscr{P},\max}^D\big)^n=0 \ \text{on} \ D\setminus\{\mathbf{z_1},\ldots,\mathbf{z}_n\}).\]
\end{Proposition}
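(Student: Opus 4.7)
The plan is to prove the two conclusions in order: local boundedness first via Remark~\ref{rem-existence}, then maximality on $D\setminus\{\mathbf{z}_1,\ldots,\mathbf{z}_p\}$ by contradiction, improving $\Phi_{\mathscr{P},\max}^D$ on a small ball disjoint from the poles through a Perron--Bremermann/Bedford--Taylor construction and then invoking the uniqueness property~(3) of the global Zhou weight.

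For the first assertion, Remark~\ref{rem-existence} supplies $N>0$ with $\Phi_{\mathscr{P},\max}^D\ge N\,G_{D,(\mathbf{z}_1,\ldots,\mathbf{z}_p)}$ on $D$. Since $e^{G_{D,\mathbf{Z}}}$ is continuous on $D$ and $G_{D,\mathbf{Z}}>-\infty$ off $\mathbf{Z}$, the multipoled Green function belongs to $L^{\infty}_{\mathrm{loc}}(D\setminus\mathbf{Z})$, and combined with $\Phi_{\mathscr{P},\max}^D\le 0$ this yields the desired local boundedness.

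For the second assertion, write $\Phi:=\Phi_{\mathscr{P},\max}^D$ and suppose for contradiction that $(dd^c\Phi)^n\not\equiv 0$ on some ball $B\Subset D\setminus\mathbf{Z}$. Since $\Phi$ is locally bounded on $D\setminus\mathbf{Z}$ by the previous step, Bedford--Taylor's characterization of maximality applies, so $\Phi$ fails to be maximal on $B$; the Perron--Bremermann envelope then furnishes $v\in\mathrm{PSH}(B)\cap L^{\infty}(B)$ with $v\ge\Phi$, $v(z_0)>\Phi(z_0)$ for some $z_0\in B$, and $\limsup_{z\to\zeta}v(z)\le\Phi(\zeta)$ for every $\zeta\in\partial B$. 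Define
\[
\tilde\varphi:=\begin{cases}\max(v,\Phi) & \text{on }B,\\ \Phi & \text{on }D\setminus B.\end{cases}
\]
The boundary $\limsup$-condition on $v$ combined with upper semicontinuity of $\Phi$ shows that $\tilde\varphi\in\mathrm{PSH}^-(D)$ (note $v\le 0$ on $B$ by the maximum principle applied to the boundary data $\Phi|_{\partial B}\le 0$), with $\tilde\varphi\ge\Phi$ globally and $\tilde\varphi(z_0)>\Phi(z_0)$. Since $B$ avoids $\mathbf{Z}$, $\tilde\varphi=\Phi$ near each pole $\mathbf{z}_i$, so $|\mathbf{f}_{0,i}|^2e^{-2u_{0,i}-2\tilde\varphi}$ fails to be integrable near $\mathbf{z}_i$ by property~(2) of $\Phi$. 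Property~(3) of the global Zhou weight then forces $\tilde\varphi=\Phi$ on $D$, contradicting the strict inequality at $z_0$.

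The principal obstacle is the gluing step: one must produce an honest competitor $\tilde\varphi\in\mathrm{PSH}^-(D)$ that strictly improves $\Phi$ on $B$ while coinciding with $\Phi$ outside. The local boundedness of $\Phi$ on $D\setminus\mathbf{Z}$ from the first step is exactly what activates Bedford--Taylor's Dirichlet-problem machinery on $B$, and the sharp $\limsup$ boundary condition it provides is precisely what is needed to patch $v$ into $\Phi$ across $\partial B$ without destroying upper semicontinuity or plurisubharmonicity.
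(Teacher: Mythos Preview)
Your proof is correct. The maximality argument is essentially the paper's: both invoke the Bedford--Taylor characterization (Lemma~\ref{lem-maximal.ddcn=0}) together with the defining property~(3) of the global Zhou weight, and your explicit Perron--Bremermann competitor is exactly what the paper's one-line citation of Lemma~\ref{lem-maximal.ddcn=0} is hiding. The genuine difference is in the local boundedness step. You obtain $\Phi_{\mathscr{P},\max}^D\in L^\infty_{\mathrm{loc}}(D\setminus\mathbf{Z})$ directly from the Green-function lower bound of Remark~\ref{rem-existence}, whereas the paper first builds the single-poled global Zhou weights $\Phi_{D,\mathbf{z}_i}$ via Lemma~\ref{lem-sigma.le.PhiP.le.min}, invokes the single-poled local boundedness (Proposition~\ref{prop-maximal.single.pole}), and then sandwiches $\Phi_{\mathscr{P},\max}^D$ between $\sum_i\Phi_{D,\mathbf{z}_i}$ and $\min_i\Phi_{D,\mathbf{z}_i}$. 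Your route is shorter and avoids the auxiliary decomposition lemma; the paper's route, on the other hand, yields as a byproduct the two-sided comparison \eqref{eq-sumPhi.le.Phimax.le.min} with the single-poled weights, which is reused elsewhere (e.g.\ in the proofs of Proposition~\ref{prop-global.is.local} and Proposition~\ref{prop-continuous.exhaustive}).
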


\begin{Proposition}\label{prop-continuous.exhaustive}
    Let $\Phi_{\mathscr{P},\max}^D$ be a global Zhou weight related to $\mathscr{P}$ on $D$, where $D$ is a bounded hyperconvex domain. Then $e^{\Phi_{\mathscr{P},\max}^D}$ is continuous on $D$, and $\Phi_{\mathscr{P},\max}^D(z)\to 0$ as $z\to \partial D$.
\end{Proposition}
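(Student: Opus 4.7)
The proposition makes two assertions, which I would address separately. For the exhaustion, Remark \ref{rem-existence} supplies an $N>0$ with $\Phi_{\mathscr{P},\max}^D \ge N\, G_{D,\mathbf{Z}}$ on $D$. Since $D$ is bounded hyperconvex, $G_{D,\mathbf{Z}}$ is an exhaustion, so $G_{D,\mathbf{Z}}(z)\to 0$ as $z\to\partial D$; combining with $\Phi_{\mathscr{P},\max}^D\le 0$, the squeeze forces $\Phi_{\mathscr{P},\max}^D(z)\to 0$ as $z\to\partial D$.

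For the continuity of $e^{\Phi_{\mathscr{P},\max}^D}$ at each pole $\mathbf{z}_i$, I would use Proposition \ref{prop-global.is.local}, which says $\Phi_{\mathscr{P},\max}^D$ restricts to a local Zhou weight at $\mathbf{z}_i$, and then invoke the single-pole theory of \cite{BGMY23}: there exists a global single-pole Zhou weight $\Phi^D_{\mathbf{z}_i,\max}$ on $D$ (in the sense of Definition \ref{def-global.Zhou.weight.in.intro}) with equivalent singularity at $\mathbf{z}_i$, i.e., $\Phi^D_{\mathbf{z}_i,\max} = \Phi_{\mathscr{P},\max}^D + O(1)$ near $\mathbf{z}_i$, and with $e^{\Phi^D_{\mathbf{z}_i,\max}}$ continuous on $D$. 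Continuity of $e^{\Phi^D_{\mathbf{z}_i,\max}}$ together with the integrability of $|\mathbf{f}_{0,i}|^2e^{-2u_{0,i}}$ near $\mathbf{z}_i$ and the non-integrability condition (2) of Definition \ref{def-global.Zhou.weight.in.intro} forces $e^{\Phi^D_{\mathbf{z}_i,\max}}(\mathbf{z}_i)=0$. Hence $0\le e^{\Phi_{\mathscr{P},\max}^D}\le C\, e^{\Phi^D_{\mathbf{z}_i,\max}}$ near $\mathbf{z}_i$, and together with upper semi-continuity of $e^{\Phi_{\mathscr{P},\max}^D}$, the squeeze yields continuity at $\mathbf{z}_i$ with value $0$.

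For the continuity on $D\setminus\mathbf{Z}$, Proposition \ref{prop-maximal} gives that $\Phi_{\mathscr{P},\max}^D$ is locally bounded and maximal there, while the first two steps show $e^{\Phi_{\mathscr{P},\max}^D}$ admits continuous limits on $\partial D\cup\mathbf{Z}$ (equal to $1$ on $\partial D$ and to $0$ at each pole). The plan is to exhaust $D\setminus\mathbf{Z}$ by the open sets $\Omega_{c,c'}=\{c<e^{\Phi_{\mathscr{P},\max}^D}<c'\}$ for $0<c<c'<1$, on whose topological boundary $\Phi_{\mathscr{P},\max}^D$ takes only the two constant values $\log c$ and $\log c'$; on each $\Omega_{c,c'}$ I would solve the Bedford--Taylor Dirichlet problem with these boundary data, obtaining a continuous plurisubharmonic solution which, by the comparison principle for maximal locally bounded plurisubharmonic functions, must coincide with $\Phi_{\mathscr{P},\max}^D|_{\Omega_{c,c'}}$. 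Letting $c\downarrow 0$ and $c'\uparrow 1$ then gives continuity on all of $D\setminus\mathbf{Z}$. The main obstacle is that $\Omega_{c,c'}$ is a priori only the sublevel set of an upper semi-continuous function and need not be strictly pseudoconvex with smooth boundary, so direct application of Bedford--Taylor requires a smoothing---e.g., via Demailly's regularization of $\Phi_{\mathscr{P},\max}^D$ preserving equivalent singularity---followed by a limit argument in which the comparison principle passes through.
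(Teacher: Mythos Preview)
Your exhaustion argument and the continuity at the poles are fine and essentially parallel the paper (the paper uses Lemma~\ref{lem-sigma.le.PhiP.le.min} and Proposition~\ref{prop-exhaustion.conti.single.pole} instead of the multipoled Green function, but the squeeze is the same).

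The gap is in your argument for continuity on $D\setminus\mathbf{Z}$. The Dirichlet--comparison scheme is circular as written: since $e^{\Phi_{\mathscr{P},\max}^D}$ is only known to be upper semicontinuous, the set $\{e^{\Phi_{\mathscr{P},\max}^D}>c\}$ need not be open, so $\Omega_{c,c'}$ need not be open; and even on the part that is open, the assertion that $\Phi_{\mathscr{P},\max}^D$ takes the constant values $\log c$, $\log c'$ on the topological boundary already presupposes the continuity you are trying to prove. Without continuous boundary data you cannot invoke Bedford--Taylor, and without knowing $\Phi_{\mathscr{P},\max}^D$ attains those boundary values you cannot run the comparison principle against the solution.

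Your proposed fix via Demailly regularization is where the real content lies, but Lemma~\ref{lem-Dem.approx} by itself only gives the one-sided bound $\Phi_m\ge\Phi_{\mathscr{P},\max}^D-C_1/m$ together with pointwise/$L^1_{\mathrm{loc}}$ convergence; that is not enough to conclude continuity of the limit. The missing ingredient is a matching \emph{upper} bound on $\Phi_m$ in terms of $\Phi_{\mathscr{P},\max}^D$ with error $\to 0$. The paper obtains this as follows: for $F\in A^2(D,2m\Phi_{\mathscr{P},\max}^D)$ one has $(F,\mathbf{z}_i)\in\mathcal{I}(m\Phi_{\mathscr{P},\max}^D)_{\mathbf{z}_i}$, hence by Theorem~\ref{thm-jump.no.Zhou.no.control} the Zhou number $\sigma_{\mathbf{z}_i}(\log|F|,\Phi_{\mathscr{P},\max}^D)\ge m-C_2$, and then Proposition~\ref{prop-psi.le.sigmaZ.Phi} applied on a subdomain $D_\epsilon\Subset D$ yields $\log|F|\le(m-C_2)\Phi_{\mathscr{P},\max}^{D_\epsilon}$ there. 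Taking the supremum over $F$ and using $\Phi_{\mathscr{P},\max}^{D_\epsilon}\ge\Phi_{\mathscr{P},\max}^{D}-\delta_\epsilon$ with $\delta_\epsilon\to 0$ gives the two-sided sandwich
\[
\tfrac{m}{m-C_2}\bigl(\Phi_m-\tfrac{1}{m}\log\tfrac{C_3}{R_\epsilon^n}\bigr)+\delta_\epsilon\ \le\ \Phi_{\mathscr{P},\max}^D\ \le\ \Phi_m+\tfrac{C_1}{m},
\]
and since $\Phi_m$ is continuous this forces continuity of $\Phi_{\mathscr{P},\max}^D$. Your outline does not supply this upper bound, and without it the Demailly approximants cannot be upgraded from pointwise to locally uniform approximation.
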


\begin{Example}
    By \cite[Example 1.5 and Proposition 1.6]{BGMY23}, the function $\phi=\log|z|$ is a local Zhou weight related to $e^{-(2n-1)\log|z|}$ near $o$. Then, for a hyperconvex domain $D\subset\mathbb{C}^n$, and $\mathbf{Z}:=\big\{\mathbf{z}_1,\ldots,\mathbf{z}_p\big\}\subset D$, since the multipoled pluricomplex Green function with the pole set $\mathbf{Z}$ satisfies:
    \begin{align*}
        G_{D,(\mathbf{z}_1,\ldots,\mathbf{z}_p)}(z)=\sup\big\{\phi(z) : \phi\in \mathrm{PSH}^-(D), \ \text{and} \ \phi(\cdot)=\log|\cdot-\mathbf{z}_i|+O(1)& \\
        \text{near any } \mathbf{z}_i, \ i=1,\ldots, p&\big\},
    \end{align*}
    according to Proposition \ref{prop-local.produce.global}, the function $G_{D,(\mathbf{z}_1,\ldots,\mathbf{z}_p)}(z)$ is a multipoled global Zhou weight on $D$ related to
    \[\mathscr{P}=\bigg(\{\mathbf{z}_i\}_{i=1}^p, \{1\}_{i=1}^p, \{e^{-(2n-1)\log|z-\mathbf{z}_i|}\}_{i=1}^p\bigg).\] 
\end{Example}

\subsection{Approximations of multipoled global Zhou weights}
In this section, we give results about the approximations of the global Zhou weights with several poles. 
   
Let $D$ be a hyperconvex domain in $\mathbb{C}^n$, and $\Phi_{\mathscr{P},\max}^D$ be a global Zhou weight related to the priori
\[\mathscr{P}=\bigg(\{\mathbf{z}_i\}_{i=1}^p, \{\mathbf{f}_{0,i}\}_{i=1}^p, \{u_{0,i}\}_{i=1}^p\bigg)\]
on $D$. For any $m\in\mathbb{N}_+$, we define two compact subsets of $\mathcal{O}(D)$ (in the topology of uniform convergence on any compact subsets) as follows:
	\[\mathscr{E}_m(D):=\{f\in\mathcal{O}(D) : \sup_{z\in D}|f(z)|\le 1, (f,\mathbf{z}_i)\in\mathcal{I}(m\Phi^D_{\mathscr{P},\max})_{\mathbf{z}_i}, \ \forall i=1,\ldots,p\},\]
	\[\mathscr{A}^2_m(D):=\{f\in\mathcal{O}(D) : \|f\|_D\le 1, (f,\mathbf{z}_i)\in\mathcal{I}(m\Phi^D_{\mathscr{P},\max})_{\mathbf{z}_i}, \ \forall i=1,\ldots,p\},\] 
   where $\|f\|_D^2:=\int_D|f|^2$. Then compactness of $\mathscr{E}_m(D)$ and $\mathscr{A}^2_m(D)$ is due to Montel's theorem and Lemma \ref{lem-module}. We also define two continuous and plurisubharmonic functions $\phi_m$ and $\varphi_m$ on $D$ for any $m$ by:
   \begin{equation}\label{eq-phim}
	\phi_m(z):=\sup_{f\in\mathscr{E}_m(D)} \frac{1}{m}\log |f(z)|, \ \forall z\in D,
   \end{equation}

   \begin{equation}\label{eq-varphim}
	\varphi_m(z):=\sup_{f\in\mathscr{A}_m^2(D)}\frac{1}{m}\log|f(z)|, \ \forall z\in D.
\end{equation}
Here the continuity of $\phi_m$ and $\varphi_m$ actually means that $e^{\phi_m}$ and $e^{\varphi_m}$ are continuous.
   
   We will show that the following theorem holds.
   
   \begin{Theorem}\label{thm-approximation}
	   If $D$ is a bounded strictly hyperconvex domain (see Definition \ref{def-strhpconvex}), then

	   (1) we have
	   \[\lim_{m\to\infty}\phi_m(z)=\lim_{m\to\infty}\varphi_m(z)=\Phi^D_{\mathscr{P},\max}(z),\ \forall z\in D;\]

		(2) there exists a positive constant $\mathsf{C}$ independent of $m$, such that for any $m\in\mathbb{N}_+$,
		\[1-\frac{\mathsf{C}}{m}\le\sigma_{\mathbf{z}_i}\big(\phi_m,\Phi^D_{\mathscr{P},\max}\big)\le 1, \ \forall i\in\{1,\ldots,p\},\]
		and
		\[1-\frac{\mathsf{C}}{m}\le\sigma_{\mathbf{z}_i}\big(\varphi_m,\Phi^D_{\mathscr{P},\max}\big)\le 1,  \ \forall i\in\{1,\ldots,p\}.\]

   \end{Theorem}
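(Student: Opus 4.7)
The plan is to prove (1) by establishing matching one-sided pointwise estimates, each accurate to order $1/m$, which will simultaneously deliver (2). First observe that since $D$ is bounded, every $f\in\mathscr{E}_m(D)$ satisfies $f/\sqrt{\mathrm{vol}(D)}\in\mathscr{A}_m^2(D)$, giving $\phi_m\le\varphi_m+O(1/m)$ on all of $D$; conversely, the sub-mean-value inequality on balls $B(z,r)\subset D$ gives $\varphi_m\le\phi_m+O(1/m)$ on every compact subset of $D$. It therefore suffices to prove the upper estimate $\phi_m\le(1-\mathsf{C}/m)\Phi^D_{\mathscr{P},\max}$ on $D$ and the lower estimate $\varphi_m\ge\Phi^D_{\mathscr{P},\max}-\mathsf{C}/m$ pointwise on $D\setminus\mathbf{Z}$.

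For the upper bound, fix $f\in\mathscr{E}_m(D)$ and set $\psi_f:=\tfrac{1}{m}\log|f|\in\mathrm{PSH}^-(D)$. Proposition \ref{prop-psi.le.sigmaZ.Phi} gives $\psi_f\le\sigma_{\mathbf{Z}}(\psi_f,\Phi^D_{\mathscr{P},\max})\,\Phi^D_{\mathscr{P},\max}$, so the task reduces to showing $\sigma_{\mathbf{z}_i}(\psi_f,\Phi^D_{\mathscr{P},\max})\ge 1-\mathsf{C}/m$ for every $i$, uniformly in $f$ and $m$. This in turn follows from an effective pointwise estimate $|f(z)|\le Ce^{(m-\mathsf{C})\Phi^D_{\mathscr{P},\max}(z)}$ near each $\mathbf{z}_i$, which is a purely local statement: by Proposition \ref{prop-global.is.local}, $\Phi^D_{\mathscr{P},\max}$ is a local Zhou weight at $\mathbf{z}_i$, and combining the hypothesis $(f,\mathbf{z}_i)\in\mathcal{I}(m\Phi^D_{\mathscr{P},\max})_{\mathbf{z}_i}$ with strong openness and the defining integrability of a local Zhou weight yields exactly such a bound, extending the single-pole argument from \cite{BGMY23} pole by pole.

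For the lower bound, apply the Ohsawa--Takegoshi $L^2$-extension theorem on the bounded strictly hyperconvex domain $D$ with weight $e^{-2m\Phi^D_{\mathscr{P},\max}}$: for any $z_0\in D\setminus\mathbf{Z}$ with $\Phi^D_{\mathscr{P},\max}(z_0)>-\infty$, extend the value $1$ at $z_0$ to a holomorphic $F_m$ on $D$ with
\[\int_D|F_m|^2e^{-2m\Phi^D_{\mathscr{P},\max}}\,d\lambda\le C_0\,e^{-2m\Phi^D_{\mathscr{P},\max}(z_0)},\]
where $C_0$ depends only on $D$. Finiteness of the left-hand side forces $(F_m,\mathbf{z}_i)\in\mathcal{I}(m\Phi^D_{\mathscr{P},\max})_{\mathbf{z}_i}$ for every $i$, so after the natural normalization $F_m\in\mathscr{A}_m^2(D)$ and
\[\varphi_m(z_0)\ge\tfrac{1}{m}\log|F_m(z_0)|\ge\Phi^D_{\mathscr{P},\max}(z_0)-\mathsf{C}/m.\]
Since $\{\Phi^D_{\mathscr{P},\max}=-\infty\}$ is pluripolar, this extends to all of $D\setminus\mathbf{Z}$.

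Given both bounds, part (2) is nearly automatic: the upper-bound argument directly produces $\sigma_{\mathbf{z}_i}(\phi_m,\Phi^D_{\mathscr{P},\max})\ge 1-\mathsf{C}/m$ (and analogously for $\varphi_m$), while the matching $\sigma_{\mathbf{z}_i}\le 1$ is forced by the lower estimate: if $\sigma_{\mathbf{z}_i}(\phi_m,\Phi^D_{\mathscr{P},\max})>1$, then $\phi_m\le c\Phi^D_{\mathscr{P},\max}+O(1)$ near $\mathbf{z}_i$ with some $c>1$, contradicting $\phi_m\ge\Phi^D_{\mathscr{P},\max}-\mathsf{C}/m+O(1/m)$ (obtained by transferring the lower bound on $\varphi_m$ via the sub-mean-value comparison on a punctured neighborhood of $\mathbf{z}_i$) in view of $\Phi^D_{\mathscr{P},\max}\to-\infty$. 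The principal obstacle will be the effective single-pole estimate in the upper bound, where the constant $\mathsf{C}$ has to be extracted uniformly in $m$ and $f\in\mathscr{E}_m(D)$ from the local Zhou weight structure and strong openness; the multipoled setting itself introduces no new difficulty, since this estimate is local at each $\mathbf{z}_i$ and the Ohsawa--Takegoshi construction dispatches all poles simultaneously through the single global weight $\Phi^D_{\mathscr{P},\max}$.
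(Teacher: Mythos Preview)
Your two main ingredients are the same as the paper's: the upper bound $\phi_m\le(1-\mathsf{C}/m)\Phi^D_{\mathscr{P},\max}$ comes from the jumping-number/Zhou-number comparison (this is exactly Theorem~\ref{thm-jump.no.Zhou.no.control}, which you should invoke in place of the vaguer ``strong openness and defining integrability''), and the lower bound $\varphi_m\ge\Phi^D_{\mathscr{P},\max}-\mathsf{C}/m$ is Demailly's approximation (Lemma~\ref{lem-Dem.approx}), which is Ohsawa--Takegoshi in disguise.

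The gap is in your bridging step ``sub-mean-value gives $\varphi_m\le\phi_m+O(1/m)$ on every compact subset of $D$.'' For $f\in\mathscr{A}^2_m(D)$ the sub-mean-value inequality bounds $|f(z)|$ only at points $z$ with $B(z,r)\subset D$; it does \emph{not} bound $\sup_D|f|$, so the rescaled function $f/C_r$ need not belong to $\mathscr{E}_m(D)$, whose defining condition is a supremum over all of $D$. Consequently the inequality $\varphi_m\le\phi_m+O(1/m)$ is not justified, and with it you lose the lower bound for $\phi_m$, the upper bound for $\varphi_m$, the inequality $\sigma_{\mathbf{z}_i}(\phi_m,\Phi^D_{\mathscr{P},\max})\le 1$, and the inequality $\sigma_{\mathbf{z}_i}(\varphi_m,\Phi^D_{\mathscr{P},\max})\ge 1-\mathsf{C}/m$.

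The paper repairs this by introducing auxiliary domains $D_j\supsetneq D$ and $D_{-k}\subsetneq D$ coming from the strict hyperconvexity of $D$. For the lower bound on $\phi_m$ one runs Demailly's approximation on the larger $D_j$: the resulting $L^2(D_j)$ functions are, by sub-mean-value, bounded on $\overline{D}\subset\subset D_j$, hence after rescaling lie in $\mathscr{E}_m(D)$; this yields $\phi_m\ge\Phi^{D_j}_{\mathscr{P},\max}-O(1/m)$ on $D$, and a separate lemma (Lemma~\ref{lem-approx.Dj.outside}) shows $\Phi^{D_j}_{\mathscr{P},\max}\to\Phi^D_{\mathscr{P},\max}$ uniformly on $\overline{D}$. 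Symmetrically, restricting $\mathscr{A}^2_m(D)$ functions to the smaller $D_{-k}$ makes the sup bound legitimate there, giving $\varphi_m\le(1-\mathsf{C}/m)\Phi^{D_{-k}}_{\mathscr{P},\max}+O(1/m)$ on $D_{-k}$, and Lemma~\ref{lem-approx.D-j.inside} supplies $\Phi^{D_{-k}}_{\mathscr{P},\max}\to\Phi^D_{\mathscr{P},\max}$ pointwise. Since $\Phi^{D_j}_{\mathscr{P},\max}=\Phi^D_{\mathscr{P},\max}+O(1)$ and $\Phi^{D_{-k}}_{\mathscr{P},\max}=\Phi^D_{\mathscr{P},\max}+O(1)$ near each $\mathbf{z}_i$, the relative-type bounds in part~(2) follow as well. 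These two convergence lemmas, and the use of the exhausting family of domains that strict hyperconvexity provides, are precisely what your shortcut via sub-mean-value was meant to avoid but cannot.
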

   When $p=1$, this theorem was proved in \cite{BGMY23}. For the similar methods to approximate the multipoled pluricomplex Green functions and their applications, see e.g. \cite{Ni00, DT16, Ni21}.

\subsection{Convergence of global Zhou weights with distinct poles}
Let $\phi_{o,\max}$ be a local Zhou weights  the origin $o\in\mathbb{C}^n$ related to $|\mathbf{f}_{0}|^2e^{-2u_{0}}$. Denote $\tau_a$ be the translation operator: for any function $f$ defined on a domain in $\mathbb{C}^n$ taking values in $\mathbb{R}$ or $\mathbb{C}$,
   \[\tau_a f(\cdot):=f(\cdot-a).\]
   Note that for any $z_0\in\mathbb{C}^n$, $\tau_{z_0}\phi_{o,\max}$ is a local Zhou weight near $z_0$ related to $|\tau_{z_0}\mathbf{f}_{0}|^2e^{-2\tau_{z_0}u_{0}}$.

   Let $p\in\mathbb{N}_+$, and $\phi_{i,o,\max}$ be local Zhou weights near the origin $o\in\mathbb{C}^n$ related to $|\mathbf{f}_{0,i}|^2e^{-2u_{0,i}}$ for $i=1,\ldots,p$. Let $D$ be a bounded hyperconvex domain in $\mathbb{C}^n$.
   Let
    \[\mathbf{Z}:=\{\mathbf{z}_i\}_{i=1}^{p}, \ \mathbf{f}_i:=\tau_{\mathbf{z}_i}\mathbf{f}_{0,i}, \ u_i:=\tau_{\mathbf{z}_i}u_{0,i}, \ \phi_{\mathbf{z}_i,\max}:=\tau_{\mathbf{z}_i}\phi_{i,o,\max},\]
    for every $i\in\{1,\ldots,p\}$. Then for any distinct $\mathbf{z}_1,\ldots,\mathbf{z}_p\in D$, the function defined by
   \begin{flalign*}
    \begin{split}
        \Phi_{\mathbf{Z},\max}^D(z):=\sup\big\{\phi(z) :\ &\phi\in\mathrm{PSH}^-(D), \ (\mathbf{f}_{i},\mathbf{z}_i)\notin\mathcal{I}(u_{i}+\phi)_{\mathbf{z}_i},\\
        &\text{and} \ \phi\ge \phi_{\mathbf{z}_i,\max}+O(1) \ \text{near} \ \mathbf{z}_i, \ \forall i=1,\ldots,p \big\}
    \end{split}
    \end{flalign*}
    is a global Zhou weight on $D$ related to
    \[\mathscr{P}_{\mathbf{Z}}:=\bigg(\{\mathbf{z}_i\}_{i=1}^p, \{\mathbf{f}_{i}\}_{i=1}^p, \{u_{i}\}_{i=1}^p\bigg).\]
     Now we fix $\{\mathbf{f}_{0,i}\}_{i=1}^p$, $\{u_{i}\}_{i=1}^p$ and $\{\phi_{i,o,\max}\}_{i=1}^p$ to study the behavior of the global Zhou weight $\Phi_{\mathbf{Z},\max}^D$ as the pole set $\mathbf{Z}$ moves in $D$.

    Let $\mathbf{Z}^{\varepsilon}=\{\mathbf{z}_i^{\varepsilon}\}_{i=1}^p$ be a set of $p$ distinct points in $D$ for any $\varepsilon\in (0,1)$, such that $\mathbf{z}_i^{\varepsilon}\to \mathbf{z}_i$ as $\varepsilon\to 0^+$ for any $i=1,\ldots,p$. Let $\Phi_{\mathbf{Z}^{\varepsilon},\max}$ be the multipoled global Zhou weight on $D$ related to
    \[\mathscr{P}_{\mathbf{Z}^{\varepsilon}}:=\bigg(\{\mathbf{z}^{\varepsilon}_i\}_{i=1}^p, \{\mathbf{f}_{i}\}_{i=1}^p, \{u_{i}\}_{i=1}^p\bigg).\]
    We prove the following continuity property for multipoled global Zhou weights.
    \begin{Theorem}\label{thm-Zvarepsilon.to.Z}
        Suppose $\mathbf{Z}=\{\mathbf{z}_1,\ldots,\mathbf{z}_p\}$ is a set of $p$ distinct points in $D$. Then
        \[\lim_{\varepsilon\to 0^+}\Phi_{\mathbf{Z}^{\varepsilon},\max}^D=\Phi_{\mathbf{Z},\max}^D,\]
        where the convergence is pointwise on $\overline{D}$ and uniform on every compact subset of $\overline{D}\setminus\mathbf{Z}$.
    \end{Theorem}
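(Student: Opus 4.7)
The plan is to adapt Demailly's classical continuity theorem for multipoled pluricomplex Green functions \cite{Dem87b} to the Zhou-weight setting, combining the sup-characterization from Proposition \ref{prop-local.produce.global} with the approximation Theorem \ref{thm-approximation}. I would establish the two one-sided estimates $\limsup_{\varepsilon\to 0^+}\Phi^D_{\mathbf{Z}^\varepsilon,\max}\le\Phi^D_{\mathbf{Z},\max}$ and $\liminf_{\varepsilon\to 0^+}\Phi^D_{\mathbf{Z}^\varepsilon,\max}\ge\Phi^D_{\mathbf{Z},\max}$ pointwise on $\overline{D}\setminus\mathbf{Z}$, then upgrade to uniform convergence on compact subsets. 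As a preliminary step, I would verify that $\{\Phi^D_{\mathbf{Z}^\varepsilon,\max}\}_{0<\varepsilon<1}$ is locally uniformly bounded on compact subsets of $\overline{D}\setminus\mathbf{Z}$: the upper bound is immediate, and for the lower bound, Remark \ref{rem-existence} supplies constants $N_\varepsilon$ with $\Phi^D_{\mathbf{Z}^\varepsilon,\max}\ge N_\varepsilon\,G_{D,\mathbf{Z}^\varepsilon}$ that, by inspecting the proof and exploiting translation-invariance of the data, can be chosen uniform in small $\varepsilon$; classical continuity of $G_{D,\mathbf{Z}^\varepsilon}$ in its poles together with Proposition \ref{prop-continuous.exhaustive} then yields uniformity up to the boundary.

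For the limsup direction, set $\Phi^*:=(\limsup_{\varepsilon\to 0^+}\Phi^D_{\mathbf{Z}^\varepsilon,\max})^*\in\mathrm{PSH}^-(D)$. By Proposition \ref{prop-local.produce.global} it suffices to check that $\Phi^*$ is a competitor in the sup-characterization of $\Phi^D_{\mathbf{Z},\max}$. The singularity lower bound $\Phi^*\ge\phi_{\mathbf{z}_i,\max}+O(1)$ near $\mathbf{z}_i$ passes from the analogous bound for $\Phi^D_{\mathbf{Z}^\varepsilon,\max}$ at $\mathbf{z}^\varepsilon_i$ by using $\tau_{\mathbf{z}^\varepsilon_i}\phi_{i,o,\max}\to\tau_{\mathbf{z}_i}\phi_{i,o,\max}$ and the uniform constants from the preliminary step. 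For the non-integrability condition $(\mathbf{f}_i,\mathbf{z}_i)\notin\mathcal{I}(u_i+\Phi^*)_{\mathbf{z}_i}$, I would argue by contradiction: if integrability held, strong openness (Section \ref{subsec-required.lemma}) would yield $p>1$ with $|\mathbf{f}_i|^2e^{-2p(u_i+\Phi^*)}$ integrable in a neighborhood of $\mathbf{z}_i$, and translation-invariance of the germs combined with $\Phi^*\ge\Phi^D_{\mathbf{Z}^\varepsilon,\max}$ in the limsup sense would transfer this (via the change of variables $z\mapsto z+\mathbf{z}^\varepsilon_i-\mathbf{z}_i$) to integrability of $|\mathbf{f}^\varepsilon_i|^2e^{-2(u^\varepsilon_i+\Phi^D_{\mathbf{Z}^\varepsilon,\max})}$ near $\mathbf{z}^\varepsilon_i$ for all small $\varepsilon$, contradicting Definition \ref{def-Zhou.maximal.weight}(2).

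For the liminf direction, fix $z_0\in\overline{D}\setminus\mathbf{Z}$ and $c<\Phi^D_{\mathbf{Z},\max}(z_0)$. By Theorem \ref{thm-approximation} there exist $m$ large and $f\in\mathscr{A}^2_m(D)$ associated to $\mathbf{Z}$ with $\tfrac{1}{m}\log|f(z_0)|>c$. I would construct, via an Ohsawa--Takegoshi extension or a Skoda-type division argument, a family $\{f^\varepsilon\}\subset\mathcal{O}(D)$ with $f^\varepsilon\to f$ locally uniformly, $\|f^\varepsilon\|_D\le 1+o(1)$, and $(f^\varepsilon,\mathbf{z}^\varepsilon_i)\in\mathcal{I}(m\Phi^D_{\mathbf{Z}^\varepsilon,\max})_{\mathbf{z}^\varepsilon_i}$ for each $i$, so that after rescaling $f^\varepsilon$ lies in the analogue of $\mathscr{A}^2_m(D)$ for the pole set $\mathbf{Z}^\varepsilon$. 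Then the corresponding approximation $\varphi_m^{\mathbf{Z}^\varepsilon}$ satisfies $\varphi_m^{\mathbf{Z}^\varepsilon}(z_0)\ge\tfrac{1}{m}\log|f^\varepsilon(z_0)|\to\tfrac{1}{m}\log|f(z_0)|>c$; combined with the quantitative convergence $\varphi_m^{\mathbf{Z}^\varepsilon}\to\Phi^D_{\mathbf{Z}^\varepsilon,\max}$ (uniform in $\varepsilon$ for $z_0$ fixed) from Theorem \ref{thm-approximation}, this gives $\liminf_{\varepsilon\to 0^+}\Phi^D_{\mathbf{Z}^\varepsilon,\max}(z_0)\ge c$, and letting $c\nearrow\Phi^D_{\mathbf{Z},\max}(z_0)$ finishes this step. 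Pointwise convergence on $\partial D$ is immediate from the uniform boundary decay; pointwise convergence at the poles $\mathbf{z}_i$ follows by combining the uniform bound $\Phi^D_{\mathbf{Z}^\varepsilon,\max}(\mathbf{z}_i)\le\phi_{\mathbf{z}^\varepsilon_i,\max}(\mathbf{z}_i)+O(1)\to-\infty$ with the pointwise lower bound; and uniform convergence on compact $K\subset\overline{D}\setminus\mathbf{Z}$ follows by passing to the continuous $e^{\Phi^D_{\mathbf{Z}^\varepsilon,\max}}$ and invoking a Hartogs--Dini-type argument using the uniform boundedness.

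The main obstacle is the construction in the liminf step: producing $\{f^\varepsilon\}$ with simultaneous control of $L^2$-norm on $D$, the prescribed multiplier-ideal vanishing at the moving poles $\mathbf{z}^\varepsilon_i$, and locally uniform convergence to $f$. Transferring the $m$-th order Zhou-singularity from $\mathbf{Z}$ to $\mathbf{Z}^\varepsilon$ is the technical heart of the theorem; I expect this to require an Ohsawa--Takegoshi extension whose constants are uniformly controlled as $\mathbf{z}^\varepsilon_i\to\mathbf{z}_i$, possibly combined with correction factors (products of local units designed to shift zeros from $\mathbf{z}_i$ to $\mathbf{z}^\varepsilon_i$) to absorb the singularity shift modulo $L^2$-negligible error.
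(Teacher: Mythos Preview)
Your approach diverges substantially from the paper's and carries unresolved obstacles in both directions. The paper avoids the limsup/liminf dichotomy entirely: following Demailly's method for Green functions, it proves a single multiplicative comparison lemma asserting that for any $\delta>0$ there exist neighborhoods $U_i\ni w_i$ such that whenever $\mathbf{z}_i,\mathbf{z}'_i\in U_i$ one has
\[(1+\delta)^{-1}\Phi^D_{\mathbf{Z},\max}\le\Phi^D_{\mathbf{Z}',\max}\le(1+\delta)\Phi^D_{\mathbf{Z},\max}\quad\text{on }\overline D\setminus\bigcup_i U_i.\]
The proof is an elementary gluing: bound $\Phi^D_{\mathbf{Z},\max}$ from above near each pole by a single-pole Zhou weight $\Psi_i(\cdot-\mathbf{z}_i)$ on a small ball, and from below on $\overline D$ by $\sum_i\Phi_i(\cdot-\mathbf{z}_i)$ where $\Phi_i$ are single-pole Zhou weights on a large ball containing $D$; then patch $(1+\delta)\Phi^D_{\mathbf{Z},\max}$ with $\sum_i\Phi_i(\cdot-\mathbf{z}'_i)$ across a suitable sublevel set to obtain a competitor for $\Phi^D_{\mathbf{Z}',\max}$. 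No approximation theorem and no $L^2$-methods are needed, and the argument works on any bounded hyperconvex $D$, whereas your liminf step invokes Theorem~\ref{thm-approximation}, which requires \emph{strict} hyperconvexity.

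As you yourself note, your liminf step hinges on constructing $f^\varepsilon$ with $(f^\varepsilon,\mathbf{z}^\varepsilon_i)\in\mathcal{I}(m\Phi^D_{\mathbf{Z}^\varepsilon,\max})_{\mathbf{z}^\varepsilon_i}$ approximating a given $f$; since this multiplier ideal is governed by the possibly intricate singularity of the Zhou weight and not merely by a power of the maximal ideal, there is no evident Ohsawa--Takegoshi or division mechanism that transfers membership along a moving pole with uniform constants---this remains a genuine gap. Your limsup step also has an issue you may be underestimating: the relation ``$\Phi^*\ge\Phi^D_{\mathbf{Z}^\varepsilon,\max}$ in the limsup sense'' does not give a pointwise inequality for any fixed $\varepsilon$, so integrability of $|\mathbf{f}_i|^2e^{-2p(u_i+\Phi^*)}$ near $\mathbf{z}_i$ cannot be directly propagated to integrability of $|\mathbf{f}^\varepsilon_i|^2e^{-2(u^\varepsilon_i+\Phi^D_{\mathbf{Z}^\varepsilon,\max})}$ near $\mathbf{z}^\varepsilon_i$. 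The effectiveness tool you cite (Lemma~\ref{lem-effect_GZ}) requires an increasing sequence with pole fixed at $\mathbf{z}_i$, which is not what translation of $\Phi^D_{\mathbf{Z}^\varepsilon,\max}$ provides.
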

    Theorem \ref{thm-Zvarepsilon.to.Z} shows the continuity of multipoled global Zhou weights with respect to their poles provided that the poles do not collide, i.e.,
    \[\Phi_{\mathbf{Z},\max}^D(z)=\Phi_{\{\mathbf{z}_1,\ldots,\mathbf{z}_p\},\max}^D(z)\]
    is a continuous function defined on
    \[\Big\{(z,\mathbf{z}_1,\ldots,\mathbf{z}_n)\in\overline{D}\times D^p : z\neq\mathbf{z}_i\neq\mathbf{z}_j, \ \forall i\neq j\Big\}.\]

\subsection{Semi-continuity for Zhou numbers}
   The following Siu's semi-continuity theorem is widely used in several complex variables, complex geometry, and pluripotential theory:
   \begin{Theorem}[Siu's semi-continuity theorem \cite{Siu74}, see also \cite{AMAG} Corollary 13.3]
    Let $\varphi$ be a plurisubharmonic function on a complex manifold $X$. Then, for every $c>0$, the Lelong number upper-level set
    \[E_c(\varphi)=\big\{z\in X : \nu(\varphi,z)\ge c\big\}\]
    is an analytic subset of $X$.
   \end{Theorem}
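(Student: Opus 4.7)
The plan is to establish analyticity of $E_c(\varphi)$ via multiplier ideal sheaves and Skoda's integrability criterion, in the spirit of Demailly's $L^2$-method approach. Since analyticity is a local property, I would fix an arbitrary $z_0\in X$ and work in a Stein coordinate neighborhood. The central tool is Nadel's coherence theorem: for every plurisubharmonic function $\psi$, the multiplier ideal sheaf $\mathcal{I}(\psi)$ is a coherent analytic sheaf, so its zero variety $V(\mathcal{I}(\psi))$ is automatically an analytic subset of $X$. My goal is to realize $E_c(\varphi)$ as such a variety, up to a controlled limiting procedure.

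Second, I would combine Nadel coherence with Skoda's two-sided integrability estimates
\begin{equation*}
\nu(\psi,z)<1\Longrightarrow z\notin V(\mathcal{I}(\psi)),\qquad \nu(\psi,z)\ge n\Longrightarrow z\in V(\mathcal{I}(\psi)).
\end{equation*}
Applied to $\psi=(n/c)\varphi$, the second implication gives $E_c(\varphi)\subseteq V(\mathcal{I}((n/c)\varphi))$; applied to $\psi=(1/c)\varphi$, the first implication gives $V(\mathcal{I}((1/c)\varphi))\subseteq E_c(\varphi)$. These inclusions sandwich $E_c(\varphi)$ between two analytic sets, but they differ by the dimension factor $n$, so neither alone identifies $E_c(\varphi)$.

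Third, to close the gap I would invoke a Hörmander $L^2$ construction. For each $z_0$ with $\nu(\varphi,z_0)\ge c$ and each small rational $\varepsilon>0$, solve a $\bar\partial$-equation with weight $c\varphi+(n+\varepsilon)\log|z-z_0|$ on a small ball to produce holomorphic germs whose vanishing order is dictated precisely by $c\varphi$. Intersecting the resulting analytic loci over a countable family of rational parameters, and using the Noetherian property of coherent analytic ideals so that the descending chain stabilizes locally, produces a single analytic set that coincides with $E_c(\varphi)$ near $z_0$. Patching these local descriptions via Nadel coherence yields the global analytic structure of $E_c(\varphi)$ on $X$. The strong openness property of multiplier ideal sheaves recalled in the present paper can be invoked here to convert the limit over rational parameters into a single multiplier ideal, which streamlines the Noetherian step.

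The principal obstacle is precisely closing the dimension-sized gap between the two halves of Skoda's inequality. This requires the full force of the $L^2$-existence theorem for $\bar\partial$ with singular weights (Hörmander, Andreotti--Vesentini, or Ohsawa--Takegoshi); one must carefully verify that each weight retains plurisubharmonicity together with the required curvature lower bound, so that the $\bar\partial$-solutions are genuine holomorphic sections with the prescribed vanishing order. Beyond that delicate step, the remaining ingredients (Nadel coherence, Noetherian stabilization, strong openness) are by now standard, and assembling them gives the theorem.
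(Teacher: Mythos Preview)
The paper does not prove this statement; it is quoted as a classical result from \cite{Siu74} and \cite{AMAG}, and serves only as motivation for the paper's own Theorem~\ref{thm-semi.continuity}. So there is no ``paper's proof'' to compare against directly. That said, the route the paper takes to its generalization is quite different from yours: specializing Theorem~\ref{thm-semi.continuity} to $\phi_{o,\max}=\log|z|$ recovers Siu's theorem, and that proof goes through Rashkovskii's analyticity lemma (Lemma~\ref{Lem-Rash.ana.thm}), which in the Appendix is established via Kiselman's minimum principle applied to $\mathcal{U}(\zeta,t)=\sup_{\{\varphi_\zeta<\mathrm{Re}\,t\}}u$, followed by H\"ormander--Bombieri--Skoda. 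This is the Kiselman--Demailly argument, not the Nadel-coherence argument you outline.

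Your first two steps are fine, but step~3 as written does not work. You propose, for each $z_0\in E_c(\varphi)$, to solve $\bar\partial$ with weight $c\varphi+(n+\varepsilon)\log|z-z_0|$ and then ``intersect the resulting analytic loci over a countable family of rational parameters.'' The difficulty is that the pole $z_0$ ranges over the (a priori uncountable, non-analytic) set $E_c(\varphi)$ itself, so you have no countable family to intersect, and the holomorphic germs you produce live in different ideals at different points; there is no single coherent sheaf whose variety this procedure identifies. In the standard multiplier-ideal proof (Demailly, \cite{AMAG}) the $L^2$ step is global, not pointwise over $E_c$: one forms the Bergman-type approximation $\varphi_m=\tfrac{1}{2m}\log\sum_k|\sigma_{m,k}|^2$ on a fixed domain, obtains $\nu(\varphi,x)-n/m\le\nu(\varphi_m,x)\le\nu(\varphi,x)$, notes that $E_{c'}(\varphi_m)$ is analytic because $\varphi_m$ has analytic singularities, and then writes $E_c(\varphi)=\bigcap_m E_{c-n/m}(\varphi_m)$. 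If you replace your step~3 by this, the argument goes through; strong openness is not needed.
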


   Some important generalizations of Siu's semi-continuity theorem were given in \cite{Dem87a}, \cite{Rash06} (Lemma \ref{Lem-Rash.ana.thm} in the present paper), etc.

   We prove the corresponding semi-continuity result for Zhou numbers in the present paper.

   \begin{Theorem}\label{thm-semi.continuity}
    Let $\phi_{o,\max}$ be a local Zhou weights near the origin $o\in\mathbb{C}^n$. Let $D$ be a domain in $\mathbb{C}^n$. Then for any $\psi\in\mathrm{PSH}(D)$, and any $c>0$, the set
   \[E_c(\psi,\phi_{o,\max}):=\big\{z\in D : \sigma_{z}(\psi, \tau_{z}\phi_{o,\max})\ge c\big\}\]
   is an analytic subset of $D$.
   \end{Theorem}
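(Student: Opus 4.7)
The plan is to translate the condition $\sigma_z(\psi,\tau_z\phi_{o,\max})\ge c$ into a multiplier-ideal (non-integrability) condition at the moving point $z$, and then invoke Rashkovskii's analyticity theorem (Lemma~\ref{Lem-Rash.ana.thm}) to conclude that the resulting set is analytic. Let $(\mathbf{f}_0,u_0)$ be the data with respect to which $\phi_{o,\max}$ is a local Zhou weight (cf.\ Definition~\ref{def-max.relat.in.intro}). I would first prove the characterization
\[
\sigma_z(\psi,\tau_z\phi_{o,\max})\ge c \iff |\tau_z\mathbf{f}_0|^2 e^{-2\tau_z u_0-(2/c)\psi} \text{ is not integrable near } z.
\]
The forward direction uses the maximality built into $\phi_{o,\max}$: one always has $\psi\le\sigma_z(\psi,\tau_z\phi_{o,\max})\tau_z\phi_{o,\max}+O(1)$ near $z$ (the local analog of Proposition~\ref{prop-psi.le.sigmaZ.Phi}, known from \cite{Rash06}), so $\sigma_z\ge c$ combined with $\phi_{o,\max}\le 0$ gives $\psi/c\le\tau_z\phi_{o,\max}+O(1)$ near $z$, and property~$(2)$ of the local Zhou weight then propagates the non-integrability.

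For the reverse direction I would consider $\varphi':=\max\{\psi/c,\tau_z\phi_{o,\max}\}$ near $z$, a plurisubharmonic function dominating $\tau_z\phi_{o,\max}$. The crucial step is to show that $|\tau_z\mathbf{f}_0|^2 e^{-2\tau_z u_0-2\varphi'}$ is still not integrable near $z$; once this is achieved, property~$(3)$ of the local Zhou weight forces $\varphi'=\tau_z\phi_{o,\max}+O(1)$ near $z$, whence $\psi/c\le\tau_z\phi_{o,\max}+O(1)$ and $\sigma_z(\psi,\tau_z\phi_{o,\max})\ge c$. With the characterization in hand,
\[
E_c(\psi,\phi_{o,\max})=\big\{z\in D:(\tau_z\mathbf{f}_0,z)\notin\mathcal{I}(\tau_z u_0+\psi/c)_z\big\}.
\]
Lemma~\ref{Lem-Rash.ana.thm} then applies to this parametrized family, naturally realized via the single plurisubharmonic function $\Psi(z,\zeta):=u_0(\zeta)+\tfrac{1}{c}\psi(z+\zeta)$ on the open subset of $D\times\mathbb{C}^n$ where $z+\zeta\in D$, together with the holomorphic section $\mathbf{F}(z,\zeta):=\mathbf{f}_0(\zeta)$, yielding that $E_c$ is an analytic subset of $D$.

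The principal obstacle is the reverse direction of the characterization. Property~$(3)$ of the local Zhou weight speaks only to plurisubharmonic functions that already dominate $\phi_{o,\max}+O(1)$, so transferring non-integrability across the $\max$ operation is delicate: because $e^{-2\varphi'}\le e^{-2\psi/c}$ pointwise, the divergence of $\int|\tau_z\mathbf{f}_0|^2 e^{-2\tau_z u_0-(2/c)\psi}$ does not automatically persist for $\varphi'$. To close this gap I would split a neighborhood of $z$ into $A:=\{\psi/c\ge\tau_z\phi_{o,\max}\}$ and its complement---on $A^c$ the $\varphi'$-integrand coincides with $|\tau_z\mathbf{f}_0|^2 e^{-2\tau_z u_0-2\tau_z\phi_{o,\max}}$, which is non-integrable by property~$(2)$---and combine this with the strong openness property of multiplier ideal sheaves to exclude the pathological case where the divergence of the $\psi/c$-integral is confined to a subset of $A$ on which the $\varphi'$-integrand is strictly smaller. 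As a fallback, one can approximate $\phi_{o,\max}$ by plurisubharmonic functions with analytic singularities via Demailly's approximation theorem and apply Siu's theorem to each approximation, recovering $E_c$ as an intersection of analytic subsets.
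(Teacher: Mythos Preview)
Your main characterization is false in general, not merely hard to prove. The right-hand side---non-integrability of $|\tau_z\mathbf{f}_0|^2 e^{-2\tau_z u_0-(2/c)\psi}$ near $z$---depends only on the data $(\mathbf{f}_0,u_0)$, whereas the left-hand side depends on the specific Zhou weight $\phi_{o,\max}$. If the equivalence held, then for any second Zhou weight $\phi'_{o,\max}$ attached to the same $(\mathbf{f}_0,u_0)$, taking $\psi=\phi'_{o,\max}$ and $c=1$ makes the right side true by property~(2), forcing $\sigma_o(\phi'_{o,\max},\phi_{o,\max})\ge 1$; by symmetry and Proposition~\ref{prop-psi.le.sigma.phi.local} this would yield $\phi'_{o,\max}=\phi_{o,\max}+O(1)$. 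But Zhou weights are far from unique in dimension $n\ge 2$ (the Zorn's-lemma construction in Remark~\ref{rem-existence.single.pole} already reflects this, and the associated Zhou valuations on $\mathcal{O}_o$ form a large family). Your splitting argument cannot rescue the reverse implication: property~(2) gives non-integrability on a \emph{full} neighborhood, not on the subset $A^c$, and the minimum of two non-integrable functions may well be integrable; strong openness does not bridge this. There is also a second gap: Lemma~\ref{Lem-Rash.ana.thm} is a statement about relative types to a continuous maximal weight $\varphi(x,\zeta)$ satisfying (i)--(iv), not about multiplier-ideal membership for a general family. Your $\Psi(z,\zeta)=u_0(\zeta)+\tfrac{1}{c}\psi(z+\zeta)$ has no reason to satisfy those hypotheses, and even via coherence of $\mathcal{I}(\Psi)$ on the product you would obtain non-integrability in the $2n$ variables $(z,\zeta)$, which differs from the slicewise condition you need (restriction of multiplier ideals to a slice is not the multiplier ideal of the restriction).

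Your fallback is exactly the paper's route, but ``Siu's theorem'' is not the right tool---Siu handles only Lelong numbers, i.e.\ relative types to $\log|\cdot|$. The paper first uses the two-sided estimate $\phi_{o,\max}-C_1/m\le h_m\le (1-C_2/m)\phi_{o,\max}+C_3$ for the Demailly approximants $h_m$ (the upper bound relies on Theorem~\ref{thm-jump.no.Zhou.no.control}, i.e.\ on the tameness built into Zhou weights), giving $E_c(\psi,\phi_{o,\max})=\bigcap_m E_c(\psi,h_m)$. Each $h_m$ has isolated analytic singularity; by Lemma~\ref{lem-iso.ana.sing.maximal} one replaces it by an equivalent $\tilde h_m=c_m\log|\xi|$ which is maximal and has $e^{\tilde h_m}$ H\"older continuous. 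It is to $\varphi(x,\zeta):=\tilde h_m(x-\zeta)$ that Lemma~\ref{Lem-Rash.ana.thm} legitimately applies, making each $E_c(\psi,h_m)$ analytic.
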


\subsection{Organizations}
This paper is organized as follows. Section \ref{sec-preliminaries} gives some preliminaries, including the definitions and properties of local and global Zhou weights, while some lemmas required in the present paper are also recalled in this section. The properties of multipoled global Zhou weights appeared in Section \ref{subsec.basic.property} are proved in Section \ref{sec.basic.property}. We prove the approximation and convergence result of multipoled global Zhou weights in Section \ref{sec-approximation} and Section \ref{sec-convergence} respectively. The proof of the semi-continuity result (Theorem \ref{thm-semi.continuity}) is provided in Section \ref{sec-semicontinuity}. We write the Appendix (Section \ref{sec-appendix}) to recall the proof a Rashkovskii's analyticity result for relative types (Lemma \ref{Lem-Rash.ana.thm}), while the settings and proof are slightly different. 

\section{Preliminaries}\label{sec-preliminaries}
In this section, we review the definitions and properties of the local and global Zhou weights with one single pole in \cite{BGMY23}. Some other required lemmas are also given in this section.

\subsection{Local Zhou weights}
We first recall the definition of the local Zhou weights.

Let $f_{0}=(f_{0,1},\cdots,f_{0,m})$ be a vector,
where $f_{0,1},\cdots,f_{0,m}$ are holomorphic functions near the origin $o\in\mathbb{C}^n$. Denote $|f_{0}|^{2}=|f_{0,1}|^{2}+\cdots+|f_{0,m}|^{2}$. Let $\varphi_{0}$ be a plurisubharmonic function near $o$,
such that $|f_{0}|^{2}e^{-2\varphi_{0}}$ is integrable near $o$.

\begin{Definition}[= Definition \ref{def-max.relat.in.intro}]\label{def-max.relat}
    We call that $\phi^{f_0,\varphi_0}_{o,\max}$ ($\phi_{o,\max}$ for short) is a \textbf{local Zhou weight related to $|f_{0}|^{2}e^{-2\varphi_{0}}$ near $o$},
    if the following three statements hold:
    
    (1) $|f_{0}|^{2}e^{-2\varphi_{0}}|z|^{2N_{0}}e^{-2\phi_{o,\max}}$ is integrable near $o$ for large enough $N_{0}\gg 0$;
    
    (2) $|f_{0}|^{2}e^{-2\varphi_{0}}e^{-2\phi_{o,\max}}$
    is not integrable near $o$;
    
    (3) for any plurisubharmonic function $\varphi'\geq\phi_{o,\max}+O(1)$ near $o$ such that $|f_{0}|^{2}e^{-2\varphi_{0}}e^{-2\varphi'}$
    is not integrable near $o$, $\varphi'=\phi_{o,\max}+O(1)$ holds.
\end{Definition}

The following remark gives the existence of local Zhou weights.
	
\begin{Remark}[Remark 1.3 in \cite{BGMY23}]\label{rem-existence.single.pole}
		Let $\varphi$ be a plurisubharmonic function near $o$. Assume that
        \[|f_{0}|^{2}e^{-2\varphi_{0}}|z|^{2N_{0}}e^{-2\varphi}\]
        is integrable near $o$ for large enough $N_{0}\gg0$,
		and $(f_{0},o)\not\in\mathcal{I}(\varphi+\varphi_{0})$ holds. Then there exists a local Zhou weight $\phi_{o,\max}$ related to $|f_{0}|^{2}e^{-2\varphi_{0}}$ near $o$
		such that $\phi_{o,\max}\geq\varphi$.
		
		Moreover, $\phi_{o,\max}\geq N\log|z|+O(1)$ near $o$ for some $N\gg 0$. 
\end{Remark}

For a local Zhou weight $\phi_{o,\max}$, and any plurisubharmonic function $\psi$ near $o$, denote 
\[\sigma_o(\psi,\phi_{o,\max}):=\sup\big\{b\ge 0 : \psi\leq b \phi_{o,\max}+O(1) \ \text{near} \ o\big\},\]
which is the \emph{relative type} of $\psi$ with respect to $\phi_{o,\max}$, and called \textbf{Zhou number} for local Zhou weights particularly. The Zhou number satisfies:

\begin{Proposition}[see Remark 1.4 in \cite{BGMY23}]\label{prop-psi.le.sigma.phi.local}
    For any plurisubharmonic function $\psi$ near $o$, it holds that
    \[\psi<\sigma_o(\psi,\phi_{o,\max})\phi_{o,\max}+O(1)\]
    near $o$.
\end{Proposition}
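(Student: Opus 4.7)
The plan is to reduce the statement to Rashkovskii's attainment result for relative types with respect to maximal weights. The key point is that a local Zhou weight $\phi_{o,\max}$ is itself a maximal plurisubharmonic weight in a punctured neighborhood of $o$, so the general result of \cite{Rash06} applies and forces the supremum in the definition of $\sigma_o(\psi,\phi_{o,\max})$ to be attained up to an $O(1)$ error.

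Step 1: Maximality of $\phi_{o,\max}$. I would first verify that there is a neighborhood $U$ of $o$ with $(dd^c\phi_{o,\max})^n = 0$ on $U\setminus\{o\}$. Suppose for contradiction this fails: then on some ball $V\Subset U\setminus\{o\}$ one can solve the Dirichlet problem to produce a plurisubharmonic $\tilde\phi$ on $V$ with $\tilde\phi|_{\partial V}=\phi_{o,\max}|_{\partial V}$ and $\tilde\phi>\phi_{o,\max}$ on a set of positive measure in $V$. Gluing $\tilde\phi$ on $V$ with $\phi_{o,\max}$ outside $V$ yields $\varphi'\in\mathrm{PSH}$ with $\varphi'\geq\phi_{o,\max}$, $\varphi'\not\equiv\phi_{o,\max}+O(1)$, and $\varphi'=\phi_{o,\max}$ in a neighborhood of $o$ (so condition~(2) in Definition~\ref{def-max.relat} is preserved). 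This contradicts condition~(3). Hence $\phi_{o,\max}$ is maximal on $U\setminus\{o\}$.

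Step 2: Apply Rashkovskii's attainment. Set $\sigma:=\sigma_o(\psi,\phi_{o,\max})$. For every $b<\sigma$ there is a fixed small neighborhood $U'\subset U$ of $o$ and a constant $C_b$ with $\psi\le b\phi_{o,\max}+C_b$ on $U'$. Working on a sublevel set $\Omega_M:=\{\phi_{o,\max}<-M\}\Subset U'$ and comparing the candidate function $\max(\psi-A,\sigma\phi_{o,\max})$ against $\sigma\phi_{o,\max}$ via the Bedford--Taylor comparison principle (using the maximality of $\sigma\phi_{o,\max}$ on $\Omega_M\setminus\{o\}$), one propagates the boundary inequality inward and obtains $\psi\le\sigma\phi_{o,\max}+A$ near $o$, with $A$ controlled independently of $b$ after absorbing the $b$-dependence into the shell $\{-M-1\le\phi_{o,\max}\le-M\}$.

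The main obstacle is exactly the uniform control of the additive constant as $b\nearrow\sigma$; naively the $C_b$ from the definition of $\sigma$ could blow up. The maximality of $\phi_{o,\max}$ (Step~1) is precisely what rules this out, via the comparison argument on shells of level sets of $\phi_{o,\max}$, which is the delicate ingredient of \cite{Rash06}. Step~1 itself is standard once the Dirichlet-problem construction is set up properly, and the whole proof is essentially a citation of Rashkovskii's theorem combined with the defining maximality of Zhou weights.
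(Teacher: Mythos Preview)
Your Step~1 contains a genuine gap. Condition~(3) in Definition~\ref{def-max.relat} is a statement about \emph{germs at $o$}: it says that any competitor $\varphi'$ satisfying the hypotheses must agree with $\phi_{o,\max}$ up to $O(1)$ \emph{near $o$}. Your glued function $\varphi'$ equals $\phi_{o,\max}$ identically on a neighborhood of $o$ (since $V\Subset U\setminus\{o\}$), so $\varphi'=\phi_{o,\max}+O(1)$ near $o$ holds trivially and there is no contradiction with (3). More to the point, the difference $\varphi'-\phi_{o,\max}$ is zero outside $V$ and bounded on the compact set $\overline{V}$, so $\varphi'=\phi_{o,\max}+O(1)$ on all of $U$; your claim ``$\varphi'\not\equiv\phi_{o,\max}+O(1)$'' is simply false. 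In fact, a local Zhou weight is only determined up to a bounded additive error near $o$, so one can always add a compactly supported bounded bump away from $o$ and still have a local Zhou weight; condition~(3) cannot force $(dd^c\phi_{o,\max})^n=0$ at points away from the pole.

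What \emph{is} true is that every local Zhou weight is $O(1)$-equivalent near $o$ to a function that is maximal off the pole: by Proposition~\ref{prop-local.produce.global.single.pole} one passes to the associated global Zhou weight on a small hyperconvex ball, and that function is maximal by Proposition~\ref{prop-maximal.single.pole}. Since the relative type $\sigma_o(\psi,\cdot)$ and the desired inequality depend only on the $O(1)$-equivalence class of the weight near $o$, this repairs your strategy and Rashkovskii's attainment theorem then applies. So your route is salvageable, but the maximality input must come from the global Zhou weight construction, not from the Dirichlet-gluing argument you wrote. (The paper itself does not reprove the proposition here, citing \cite{BGMY23}; the intended argument in \cite{BGMY23} works directly from condition~(3) via the competitor $\max\{\psi/\sigma,\phi_{o,\max}\}$ together with the integrability machinery around Lemma~\ref{lem-integrable.to.integrable} and the strong openness property, and does not pass through Rashkovskii's theorem.)
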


The Zhou numbers can be computed by the following formula.

\begin{Theorem}[Theorem 1.8 in \cite{BGMY23}]\label{thm-formula.Zhou.number}
    Let $\phi_{o,\max}$ be a local Zhou weight related to $|f_{0}|^{2}e^{-2\varphi_{0}}$ near $o$. Then for any plurisubharmonic function $\psi$ near $o$,
\begin{equation*}
    \sigma_o(\psi,\phi_{o,\max})=\lim_{t\to+\infty}
\frac{\int_{\{\phi_{o,\max}<-t\}}|f_{0}|^{2}e^{-2\varphi_{0}}(-\psi)}{t\int_{\{\phi_{o,\max}<-t\}}|f_{0}|^{2}e^{-2\varphi_{0}}}.
\end{equation*}
\end{Theorem}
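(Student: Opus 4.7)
The plan is to sandwich the ratio
\begin{equation*}
L_t := \frac{\int_{\{\phi_{o,\max}<-t\}}|f_{0}|^{2}e^{-2\varphi_{0}}(-\psi)}{t\int_{\{\phi_{o,\max}<-t\}}|f_{0}|^{2}e^{-2\varphi_{0}}}
\end{equation*}
by $\sigma := \sigma_o(\psi,\phi_{o,\max})$ from both sides. One may assume $\psi\le 0$ near $o$, since adding a constant to $\psi$ only changes $L_t$ by $O(1/t)$ and leaves $\sigma$ unchanged.

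For the lower bound, I would invoke Proposition \ref{prop-psi.le.sigma.phi.local}: for any $c<\sigma$ there is a constant $C_c$ with $\psi\le c\phi_{o,\max}+C_c$ near $o$. For $t$ large, $\{\phi_{o,\max}<-t\}$ lies inside that neighborhood, so $-\psi\ge ct-C_c$ pointwise on it, giving $L_t\ge c-C_c/t$. Letting $t\to\infty$ and then $c\nearrow\sigma$ yields $\liminf_t L_t\ge\sigma$.

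The upper bound $\limsup_t L_t\le\sigma$ is the main difficulty. I would argue by contradiction: if $L:=\limsup_t L_t>\sigma$, pick $\sigma<c<L$. Via the layer-cake formula
\begin{equation*}
\int_{\{\phi_{o,\max}<-t\}}|f_{0}|^{2}e^{-2\varphi_{0}}(-\psi)=\int_0^\infty \mu\bigl(\{\phi_{o,\max}<-t\}\cap\{\psi<-s\}\bigr)\,ds,
\end{equation*}
with $d\mu=|f_0|^2e^{-2\varphi_0}\,dV$ and $F(t)=\mu(\{\phi_{o,\max}<-t\})$, the assumption $L_{t_k}>c$ along a sequence $t_k\to\infty$ forces the ``super-level'' sets $\{\psi<-ct_k\}\cap\{\phi_{o,\max}<-t_k\}$ to carry non-negligible $\mu$-mass relative to $F(t_k)$. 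The goal would then be to produce a plurisubharmonic competitor $\tilde\varphi\ge\phi_{o,\max}$ with $\tilde\varphi\neq\phi_{o,\max}+O(1)$ near $o$ but $|f_0|^2e^{-2\varphi_0-2\tilde\varphi}$ still non-integrable at $o$, contradicting the maximality condition (3) of Definition \ref{def-max.relat}; a natural candidate is a regularization of $\max(\phi_{o,\max},\psi/c+C)$ for a suitable $C$.

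The central obstacle is precisely this last step: turning the averaged integral inequality into a genuine pointwise (multiplier-ideal) violation of the maximality of $\phi_{o,\max}$. I expect this to require the strong openness property of multiplier ideal sheaves to upgrade the sublevel-set mass estimate into an actual failure of $L^1$ integrability for the candidate $\tilde\varphi$. A possibly cleaner alternative would be first to verify the identity on the diagonal $\psi=\phi_{o,\max}$ — where both sides equal $1$, the verification reducing to the decay estimate $\int_t^\infty F(s)\,ds=o(tF(t))$, which in turn follows from the integrability of $|z|^{2N_0}|f_0|^2e^{-2\varphi_0-2\phi_{o,\max}}$ from Definition \ref{def-max.relat}(1) coupled with the non-integrability from (2) — and then bootstrap to general $\psi$ using the tropical multiplicativity/additivity of relative types for Zhou weights.
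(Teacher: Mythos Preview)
The paper does not prove this statement: Theorem~\ref{thm-formula.Zhou.number} is quoted from \cite{BGMY23} as background (``Theorem 1.8 in \cite{BGMY23}'') and is used without proof. There is therefore no proof in the present paper to compare against; I can only comment on the internal soundness of your sketch.

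Your lower bound is correct and complete. The upper bound, however, has two genuine gaps.

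First, the ``cleaner alternative'' is circular. You propose to verify the formula on the diagonal $\psi=\phi_{o,\max}$ and then bootstrap via tropical multiplicativity and additivity. But in \cite{BGMY23} (and as recorded in the present paper), Corollary~\ref{cor-tropical.multi.additi} is stated explicitly as a \emph{consequence} of Theorem~\ref{thm-formula.Zhou.number}. You cannot invoke it to prove the theorem. Moreover, even granting the tropical identities, it is not clear how knowing the formula for $\psi=\phi_{o,\max}$ alone would yield it for arbitrary $\psi$: tropical additivity controls $\sigma(\max\{\psi_1,\psi_2\},\cdot)$, not the numerator $\int(-\psi)\,|f_0|^2e^{-2\varphi_0}$, which is linear in $\psi$ rather than tropical.

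Second, in your primary approach the step from ``$\tilde\varphi:=\max(\phi_{o,\max},\psi/c)$ has $|f_0|^2e^{-2\varphi_0-2\tilde\varphi}$ integrable'' to ``$\limsup_t L_t\le c$'' is not carried out, and the indicated route does not close. Integrability of $|f_0|^2e^{-2\varphi_0-2\tilde\varphi}$ gives $\int_0^\infty e^{2t}\,\mu(\{\phi_{o,\max}<-t,\ \psi<-ct\})\,dt<\infty$, but the tail you need to control in $L_t$ is $\int_{ct}^\infty \mu(\{\phi_{o,\max}<-t,\ \psi<-s\})\,ds$, and for $s=cu>ct$ the set $\{\phi_{o,\max}<-t,\ \psi<-cu\}$ is \emph{larger} than $\{\phi_{o,\max}<-u,\ \psi<-cu\}$, so the integrability above does not directly dominate it. Some additional input---typically the fact that for every $\epsilon>0$ the function $|f_0|^2e^{-2\varphi_0-2(1-\epsilon)\phi_{o,\max}}$ is integrable (a consequence of maximality applied to $(1-\epsilon)\phi_{o,\max}$), which pins down the decay rate of $F(t)=\mu(\{\phi_{o,\max}<-t\})$---is needed to absorb that tail into $o(tF(t))$. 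Your sketch does not identify or use this, and the invocation of strong openness is not the missing ingredient here.
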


Theorem \ref{thm-formula.Zhou.number} yields that Zhou numbers are tropically multiplicative and tropically additive.		
	
	\begin{Corollary}[Corollary 1.9 in \cite{BGMY23}]\label{cor-tropical.multi.additi}
        Let $\psi_{1}$ and $\psi_{2}$ be plurisubharmonic functions near $o$. The following statements hold:
		
		(1) for any $c_{1}\geq 0$ and $c_{2}\geq 0$,
        \[\sigma(c_{1}\psi_{1}+c_{2}\psi_{2},\phi_{o,\max})=c_{1}\sigma(\psi_{1},\phi_{o,\max})+c_{2}\sigma(\psi_{2},\phi_{o,\max});\]
		
		(2) for holomorphic functions $f_{1}$ and $f_{2}$  near $o$,
        \[\sigma(\log|f_{1}+f_{2}|,\phi_{o,\max})\geq\min\{\sigma(\log|f_{1}|,\phi_{o,\max}),\sigma(\log|f_{2}|,\phi_{o,\max})\};\]
		
		(3)
        \[\sigma(\max\{\psi_{1},\psi_{2}\},\phi_{o,\max})=\min\{\sigma(\psi_{1},\phi_{o,\max}),\sigma(\psi_{2},\phi_{o,\max})\}.\]
	\end{Corollary}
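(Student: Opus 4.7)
My plan is to derive all three statements from the integral formula for Zhou numbers in Theorem~\ref{thm-formula.Zhou.number} together with two trivial consequences of the $\sup$ definition of the relative type: (i) $\sigma_o(\psi+g,\phi_{o,\max})=\sigma_o(\psi,\phi_{o,\max})$ whenever $g=O(1)$ near $o$, and (ii) if $\psi_1\le\psi_2+O(1)$ near $o$ then $\sigma_o(\psi_1,\phi_{o,\max})\ge\sigma_o(\psi_2,\phi_{o,\max})$. Both read straight off the definition by absorbing constants into the $O(1)$ term, and I would record them as a short preliminary lemma to reuse throughout.

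For (3), the direction ``$\le$'' is immediate from $\max\{\psi_1,\psi_2\}\ge\psi_i$ together with (ii). For the reverse direction I take any $b'$ strictly below $\min\{\sigma_o(\psi_1,\phi_{o,\max}),\sigma_o(\psi_2,\phi_{o,\max})\}$, use the definition to get $\psi_i\le b'\phi_{o,\max}+O(1)$ for both $i$, take the pointwise maximum of the two inequalities, and let $b'$ increase to the minimum. Statement (2) then follows by combining (3) with the pointwise bound $\log|f_1+f_2|\le\log 2+\max\{\log|f_1|,\log|f_2|\}$: by (i) and (ii), $\sigma_o(\log|f_1+f_2|,\phi_{o,\max})\ge\sigma_o(\max\{\log|f_1|,\log|f_2|\},\phi_{o,\max})$, and the right-hand side equals the required minimum by (3).

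The only substantive part is (1): the ``$\ge$'' direction is immediate from the definition (pair up $\psi_i\le b_i\phi_{o,\max}+O(1)$ and add), but the reverse is not, since a joint domination of $c_1\psi_1+c_2\psi_2$ by $b\phi_{o,\max}+O(1)$ need not decompose into separate dominations of each $\psi_i$. Here I invoke Theorem~\ref{thm-formula.Zhou.number} three times, for $\psi_1$, $\psi_2$, and $c_1\psi_1+c_2\psi_2$: since the denominator $t\int_{\{\phi_{o,\max}<-t\}}|f_{0}|^2 e^{-2\varphi_0}$ is common and the numerator is linear in $\psi$, the existence of the individual limits $\sigma_o(\psi_i,\phi_{o,\max})$ supplied by the theorem lets me split the limit for $c_1\psi_1+c_2\psi_2$ as $c_1\sigma_o(\psi_1,\phi_{o,\max})+c_2\sigma_o(\psi_2,\phi_{o,\max})$. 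The main obstacle is exactly this reliance on Theorem~\ref{thm-formula.Zhou.number}; once the integral formula is granted, the splitting is formal, the only subtlety being to handle the case when some $\sigma_o(\psi_i,\phi_{o,\max})=+\infty$, which I would dispatch separately by a direct check from the definition.
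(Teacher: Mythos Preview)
Your proposal is correct and matches the paper's own justification, which is simply the one-line remark that ``Theorem~\ref{thm-formula.Zhou.number} yields that Zhou numbers are tropically multiplicative and tropically additive''; you have merely spelled out the details, and in fact shown that (2) and (3) follow already from the definition without invoking the integral formula, reserving Theorem~\ref{thm-formula.Zhou.number} for the only place it is genuinely needed, namely the ``$\le$'' direction of (1).
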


    Especially, for any $(f,o)\in\mathcal{O}_o$, denote
\[\nu(f,\phi_{o,\max}):=\sigma(\log|f|,\phi_{o,\max}),\]
then $\nu(\cdot,\phi_{o,\max})$ is a valuation of $\mathcal{O}_o$, which is called \emph{Zhou valuation}.

\begin{Corollary}[Corollary 1.10 in \cite{BGMY23}]
	For any local Zhou weight $\phi_{o,\max}$ near $o$, the function $\nu(\cdot,\phi_{o,\max}):\mathcal{O}_o\rightarrow\mathbb{R}_{\ge0}\cup\{+\infty\}$ satisfies the following statements:
	
	$(1)$ $\nu(fg,\phi_{o,\max})=\nu(f,\phi_{o,\max})+\nu(g,\phi_{o,\max})$;
	
	$(2)$ $\nu(f+g,\phi_{o,\max})\ge\min\{\nu(f,\phi_{o,\max}),\nu(g,\phi_{o,\max})\}$;
	
	$(3)$ $\nu(f,\phi_{o,\max})=0$ for any $f(o)\not=0$.
\end{Corollary}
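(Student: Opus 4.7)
The corollary is almost a reformulation of results already in hand, so the plan is mostly bookkeeping: specialize Corollary \ref{cor-tropical.multi.additi} to the functions $\psi=\log|f|$ and invoke the definition $\nu(f,\phi_{o,\max}):=\sigma(\log|f|,\phi_{o,\max})$. I would dispatch the three items in order.

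For (1), I would take $c_1=c_2=1$, $\psi_1=\log|f|$, $\psi_2=\log|g|$ in Corollary \ref{cor-tropical.multi.additi}(1). Since $\log|fg|=\log|f|+\log|g|$ as plurisubharmonic functions near $o$, the tropical multiplicativity stated there gives directly $\sigma(\log|fg|,\phi_{o,\max})=\sigma(\log|f|,\phi_{o,\max})+\sigma(\log|g|,\phi_{o,\max})$, which is (1). Note that the case $f\equiv 0$ or $g\equiv 0$ is degenerate (both sides are $+\infty$) and can be handled by the convention $\sigma(-\infty,\phi_{o,\max})=+\infty$; I would remark on this briefly.

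For (2), this is precisely Corollary \ref{cor-tropical.multi.additi}(2) applied to $f_1=f$, $f_2=g$, rewritten in the $\nu$-notation.

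For (3), the only slightly non-trivial item, I would first observe that a local Zhou weight $\phi_{o,\max}$ is necessarily unbounded below near $o$: if $\phi_{o,\max}\ge -M$ on some neighborhood $U$ of $o$, then $e^{-2\phi_{o,\max}}\le e^{2M}$ on $U$, so $|f_0|^2e^{-2\varphi_0-2\phi_{o,\max}}\le e^{2M}|f_0|^2e^{-2\varphi_0}$ would be integrable near $o$, contradicting condition (2) of Definition \ref{def-max.relat}. Now suppose $f\in\mathcal{O}_o$ with $f(o)\neq 0$; then $\log|f|$ is bounded near $o$. For any $b>0$, the function $b\phi_{o,\max}+C$ (with $C\in\mathbb{R}$ arbitrary) is also unbounded below on every neighborhood of $o$, so the inequality $\log|f|\le b\phi_{o,\max}+C$ fails on arbitrarily small neighborhoods of $o$. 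Hence the set $\{b\ge 0:\log|f|\le b\phi_{o,\max}+O(1)\text{ near }o\}$ reduces to $\{0\}$ (since for $b=0$ the inequality $\log|f|\le C$ holds for large $C$), giving $\nu(f,\phi_{o,\max})=0$.

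There is essentially no obstacle here: (1) and (2) are direct specializations of a corollary already proved, and (3) only requires the elementary unboundedness observation. The only thing worth flagging in the write-up is the degenerate case in (1) (zero factors) and the convention on $\sigma$ when its argument is $-\infty$; these are purely notational matters and not genuine mathematical difficulties.
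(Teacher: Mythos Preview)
Your proposal is correct. Note that the present paper does not supply its own proof of this statement—it is quoted from \cite{BGMY23} as background in the preliminaries—so there is no in-paper argument to compare against; your derivation of (1) and (2) as direct specializations of Corollary~\ref{cor-tropical.multi.additi} together with the elementary unboundedness argument for (3) is exactly the natural route.
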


Let $(G,o)\in\mathcal{O}_o$. Recall the notation of the jumping number:
\[c_{o}^G(\phi_{o,\max}):=\sup\{c\ge 0 : (G,o)\in\mathcal{I}(c\phi_{o,\max})_o\},\]
and the \emph{complex singularity exponent}: $c_{o}(\phi_{o,\max}):=c_{o}^1(\phi_{o,\max})$.
\begin{Theorem}[Theorem 1.11 in \cite{BGMY23}]\label{thm-jump.no.Zhou.no.control}
    For any $(G,o)\in\mathcal{O}_o$, and local Zhou weight $\phi_{o,\max}$ related to $|f_0|^2e^{-2\varphi_0}$ near $o$,
    \begin{flalign*}
        \begin{split}
        \nu(G,\phi_{o,\max})+c_o(\phi_{o,\max})\le& c^G_o(\phi_{o,\max})\\
				\le& \nu(G,\phi_{o,\max})-\sigma(\log|f_0|,\phi_{o,\max})+\sigma(\varphi_0,\phi_{o,\max})+1.
        \end{split}
    \end{flalign*}
\end{Theorem}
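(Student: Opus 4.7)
The plan is to prove the two inequalities separately.

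For the \textbf{lower bound} $\nu(G,\phi_{o,\max}) + c_o(\phi_{o,\max}) \le c_o^G(\phi_{o,\max})$, I would proceed directly from the definitions. For any $b<\nu:=\nu(G,\phi_{o,\max})$ there is a constant $C_b$ with $\log|G|\le b\phi_{o,\max}+C_b$ near $o$, hence $|G|^2\le e^{2C_b}\,e^{2b\phi_{o,\max}}$; and for any $c'<c_o(\phi_{o,\max})$ the weight $e^{-2c'\phi_{o,\max}}$ is $L^1$-integrable near $o$. Multiplying these two estimates,
\[
|G|^2 e^{-2(b+c')\phi_{o,\max}} \le e^{2C_b}\,e^{-2c'\phi_{o,\max}} \in L^1_{\mathrm{loc}}(o),
\]
so $b+c'\le c_o^G(\phi_{o,\max})$. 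Letting $b\to\nu^-$ and $c'\to c_o(\phi_{o,\max})^-$ delivers the bound.

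For the \textbf{upper bound} $c_o^G(\phi_{o,\max}) \le \nu - \sigma(\log|f_0|,\phi_{o,\max}) + \sigma(\varphi_0,\phi_{o,\max}) + 1$, the strategy is to invoke the Zhou weight uniqueness (condition~(3) of Definition~\ref{def-max.relat}). Write $\sigma_f := \sigma(\log|f_0|,\phi_{o,\max})$ and $\sigma_\varphi := \sigma(\varphi_0,\phi_{o,\max})$. Assume $c<c_o^G(\phi_{o,\max})$, so $|G|^2 e^{-2c\phi_{o,\max}} \in L^1_{\mathrm{loc}}(o)$. The plan is to construct a plurisubharmonic test function $\varphi'$, built as an appropriate positive combination of $\phi_{o,\max}$, $\log|G|$ and $\varphi_0$ (possibly involving a max with $\phi_{o,\max}$ to preserve plurisubharmonicity), tuned via the bounds $\log|G|\le\nu\phi_{o,\max}+O(1)$ and $\varphi_0\le\sigma_\varphi\phi_{o,\max}+O(1)$ together with $\phi_{o,\max}\le 0$ so that $\varphi'\ge\phi_{o,\max}+O(1)$ near $o$; then one verifies that $|f_0|^2 e^{-2\varphi_0-2\varphi'}$ is not integrable near $o$. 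Condition~(3) then forces $\varphi'=\phi_{o,\max}+O(1)$, and the saturation of the coefficient balance translates into the desired inequality $c\le\nu-\sigma_f+\sigma_\varphi+1$. The ``$+1$'' enters because $\sigma(\phi_{o,\max},\phi_{o,\max})=1$.

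The \textbf{main obstacle} is handling the term $-\sigma_f$: since $-\log|f_0|$ is not plurisubharmonic, it cannot enter $\varphi'$ with a positive coefficient, so the $\sigma_f$-correction must be transferred to the integrand. I would effect this by using the pointwise upper estimate $|f_0|^2\le C_b\,e^{2b\phi_{o,\max}}$ (valid for any $b<\sigma_f$) inside the integral; combined with the Zhou-weight non-integrability $|f_0|^2 e^{-2\varphi_0-2\phi_{o,\max}}\notin L^1_{\mathrm{loc}}(o)$, this forces the non-integrability of $e^{-2\varphi_0-2(1-b)\phi_{o,\max}}$ near $o$. A H\"older-type split of $|f_0|^2 e^{-2\varphi_0-2\varphi'}$ into a power of the integrable factor $|G|^2 e^{-2c\phi_{o,\max}}$ and a remainder dominated by the non-integrable expression above then transfers non-integrability to the full integrand, closing the uniqueness argument. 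Letting $b\to\sigma_f^-$ and $c\to c_o^G(\phi_{o,\max})^-$ in the resulting coefficient balance yields the target upper bound. A minor limiting argument handles degenerate cases where $\sigma_f$, $\sigma_\varphi$, or $\nu$ is only a supremum rather than attained.
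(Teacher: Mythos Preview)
This theorem is quoted from \cite{BGMY23} and is \emph{not} proved in the present paper; it appears only as a preliminary (Theorem~\ref{thm-jump.no.Zhou.no.control}) used later in Lemma~\ref{lem-approx.inequalities} and in the proof of Proposition~\ref{prop-continuous.exhaustive}. There is therefore no ``paper's own proof'' to compare against. I can nonetheless comment on the proposal itself.

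Your lower bound is fine: with Proposition~\ref{prop-psi.le.sigma.phi.local} the bound $\log|G|\le\nu\,\phi_{o,\max}+O(1)$ is even attained, and the rest is a direct computation.

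Your upper bound, however, has a genuine gap. The scheme ``build a test function $\varphi'\ge\phi_{o,\max}+O(1)$, verify $|f_0|^2e^{-2\varphi_0-2\varphi'}\notin L^1$, invoke condition~(3), read off a coefficient identity'' is reasonable in outline, but the verification step does not go through as you describe. H\"older's inequality produces \emph{upper} bounds on integrals and therefore transfers integrability, not non-integrability; a ``H\"older-type split'' cannot, by itself, force $|f_0|^2e^{-2\varphi_0-2\varphi'}$ to be non-integrable from the non-integrability of $e^{-2\varphi_0-2(1-b)\phi_{o,\max}}$. To go that direction you would need a pointwise \emph{lower} bound on $|f_0|^2e^{-2\varphi'}$ in terms of $e^{-2(1-b)\phi_{o,\max}}$, which amounts to $\varphi'\le\log|f_0|+(1-b)\phi_{o,\max}+O(1)$; but $\log|f_0|$ has no lower bound (it is $-\infty$ on the zero set of $f_0$), so no plurisubharmonic $\varphi'$ with $\varphi'\ge\phi_{o,\max}+O(1)$ can satisfy this unless $\sigma_f=0$. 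Moreover, you never actually specify $\varphi'$, so the ``coefficient balance'' that is supposed to yield $c\le\nu-\sigma_f+\sigma_\varphi+1$ is never written down. The correct argument (in \cite{BGMY23}) uses finer tools---either the integral formula of Theorem~\ref{thm-formula.Zhou.number} or an integrability-transfer lemma in the spirit of Lemma~\ref{lem-integrable.to.integrable}---rather than a bare H\"older split; you should consult that reference for the missing mechanism.
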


\subsection{Global Zhou weights}
Now we recall the global Zhou weights.

	Let $D$ be a  domain in $\mathbb{C}^n$ with $o\in D$.	Let $f_{0}=(f_{0,1},\cdots,f_{0,m})$ be a vector,
	where $f_{0,1},\cdots,f_{0,m}$ are holomorphic functions near $o$. Denote $|f_{0}|^{2}=|f_{0,1}|^{2}+\cdots+|f_{0,m}|^{2}$. Let $\varphi_{0}$ be a plurisubharmonic function near $o$, such that $|f_{0}|^{2}e^{-2\varphi_{0}}$ is integrable near $o$. 
	\begin{Definition}[=Definition \ref{def-global.Zhou.weight.in.intro}]
		We call that a negative plurisubharmonic function $\Phi^{f_0,\varphi_0,D}_{o,\max}$ ($\Phi^{D}_{o,\max}$ for short) on $D$ a \textbf{global Zhou weight related to $|f_0|^2e^{-2\varphi_0}$ on $D$} if the following statements hold:
		
		$(1)$ $|f_0|^2e^{-2\varphi_0}|z|^{2N_0}e^{-2\Phi^{D}_{o,\max}}$ is integrable near $o$ for large enough $N_0\gg 0$;
		
		$(2)$ $|f_0|^2e^{-2\varphi_0-2\Phi^{D}_{o,\max}}$ is not integrable near $o$;
		
		$(3)$ for any negative plurisubharmonic function $\tilde\varphi$ on $D$ satisfying that $\tilde\varphi\ge\Phi^{D}_{o,\max}$ on $D$ and $|f_0|^2e^{-2\varphi_0-2\tilde\varphi}$ is not integrable near $o$, $\tilde\varphi=\Phi^{D}_{o,\max}$ holds on $D$.
	\end{Definition}

    Similar as the case of local Zhou weights, the following remark shows the existence of the global Zhou weights.

\begin{Remark}[Remark 8.2 in \cite{BGMY23}]
	Assume that there exists a negative plurisubharmonic function $\varphi$ on $D$ such that $|f_0|^2e^{-2\varphi_0-2\varphi}|z|^{2N_0}$ is integrable near $o$ for large enough $N_0\gg 0$ and $(f_0,o)\not\in\mathcal{I}(\varphi+\varphi_0)_o$.
	
	Then there exists a global Zhou weight $\Phi^{D}_{o,\max}$ on $D$ related to $|f_0|^2e^{-2\varphi_0}$ such that $\Phi^{D}_{o,\max}\ge\varphi$ on $D$.
	
\end{Remark}

We recall the definitions of \emph{hyperconvex domains} and \emph{strictly hyperconvex domains} as follows.

\begin{Definition}[see \cite{Ni95}]\label{def-strhpconvex}
	A domain $D\subset\mathbb{C}^n$ is said to be hyperconvex if there exists a continuous plurisubharmonic exhausted function $\varrho : D\to (-\infty,0)$.

	A bounded domain $D\subset\mathbb{C}^n$ is said to be strictly hyperconvex if there exists a bounded domain $\Omega$ and a function $\varrho : \Omega\to (-\infty,1)$ such that $\varrho\in C(\Omega)\cap \mathrm{PSH}(\Omega)$, $D=\{z\in\Omega : \varrho(z)<0\}$, $\varrho$ is exhaustive for $\Omega$ and for any real number $c\in [0,1]$, the open set $\{z\in\Omega : \varrho(z)<c\}$ is connected. 
\end{Definition}

It is well-known that the \emph{pluricomplex Green function} with a pole $z\in D$ on a hyperconvex domain $D\subset\mathbb{C}^n$:
\[G_D(z,\cdot):=\sup\big\{u(\cdot) : u\in\mathrm{PSH}^-(D), \ \limsup_{\zeta\to z}\big(u(\zeta)-\log|\zeta-z|\big)<+\infty\big\}\]
exists and is exhaustive on $D$. For hyperconvex domains, the following proposition gives a comparison between the global Zhou weights and the pluricomplex Green functions.
\begin{Proposition}[Lemma 8.3 in \cite{BGMY23}]
    Let $D\subset\mathbb{C}^n$ be a hyperconvex domain such that $o\in D$. For any global Zhou weight $\Phi^D_{o,\max}$ on $D$ (with the pole $o$), there exists a sufficiently large $N>0$ such that
    \[\Phi_{o,\max}^D\ge NG(z,\cdot)\]
    on $D$.
\end{Proposition}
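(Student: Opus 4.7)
The plan is to invoke property (3) in the definition of the global Zhou weight (Definition \ref{def-global.Zhou.weight.in.intro}) with a carefully chosen competitor, namely
\[\tilde\varphi := \max\bigl\{\Phi^D_{o,\max},\; N\, G_D(o,\cdot)\bigr\}\]
for a sufficiently large $N>0$. Because $D$ is hyperconvex, $G_D(o,\cdot)\in\mathrm{PSH}^-(D)$, so $\tilde\varphi$ is automatically a negative plurisubharmonic function on $D$ with $\tilde\varphi\ge\Phi^D_{o,\max}$ pointwise. If I can additionally verify that $|f_0|^2e^{-2\varphi_0-2\tilde\varphi}$ is not integrable near $o$, then property (3) will force $\tilde\varphi=\Phi^D_{o,\max}$ on $D$, which is exactly the desired inequality $\Phi^D_{o,\max}\ge N G_D(o,\cdot)$ on $D$.

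The key input needed for the non-integrability is a logarithmic lower bound
\[\Phi^D_{o,\max}(z)\ge N_1\log|z-o|+O(1) \quad\text{near } o\]
for some $N_1>0$. Granting this, since $G_D(o,z)=\log|z-o|+O(1)$ near $o$ and $\log|z-o|<0$ there, one has $N\, G_D(o,z)\le\Phi^D_{o,\max}(z)+O(1)$ near $o$ whenever $N\ge N_1$. Consequently the maximum defining $\tilde\varphi$ coincides with $\Phi^D_{o,\max}$ up to a bounded additive term in a neighborhood of $o$, so $|f_0|^2e^{-2\varphi_0-2\tilde\varphi}$ and $|f_0|^2e^{-2\varphi_0-2\Phi^D_{o,\max}}$ differ by a locally bounded factor near $o$, and the non-integrability of the former reduces to property (2) of $\Phi^D_{o,\max}$.

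It remains to justify the logarithmic lower bound on $\Phi^D_{o,\max}$. My plan is to show that the restriction of $\Phi^D_{o,\max}$ to a small neighborhood of $o$ is itself a local Zhou weight related to $|f_0|^2e^{-2\varphi_0}$ in the sense of Definition \ref{def-max.relat} (the single-pole analogue of Proposition \ref{prop-global.is.local}). Properties (1) and (2) transfer verbatim from the global definition. For the local version of (3), any candidate $\varphi'\ge\Phi^D_{o,\max}+O(1)$ defined near $o$ can be extended to all of $D$ by $\max\{\varphi'-C,\;\Phi^D_{o,\max}\}$ for a sufficiently large constant $C>0$; this extension is negative plurisubharmonic on $D$, dominates $\Phi^D_{o,\max}$ globally, and near $o$ coincides with $\varphi'-C$, so global (3) forces it to equal $\Phi^D_{o,\max}$, which in turn yields $\varphi'=\Phi^D_{o,\max}+O(1)$ near $o$. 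Once this local Zhou property is established, the last sentence of Remark \ref{rem-existence.single.pole} provides the required bound $\Phi^D_{o,\max}\ge N_1\log|z-o|+O(1)$ near $o$. The main obstacle in the argument is this transfer from the global property (3) to its local analogue, where one must handle the constant $C$ carefully to keep the extension within $\mathrm{PSH}^-(D)$ while preserving the local $O(1)$-comparison near $o$; the rest of the proof is a short application of property (3) once the lower bound is in hand.
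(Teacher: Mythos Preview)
Your overall strategy---set $\tilde\varphi=\max\{\Phi^D_{o,\max},NG_D(o,\cdot)\}$ and invoke property (3)---matches the paper's (see the proof of inequality~\eqref{eq-Phi.ge.NG_D} in Remark~\ref{rem-existence} for the multipoled case). The divergence, and the gap, is in how you establish the non-integrability of $|f_0|^2e^{-2\varphi_0-2\tilde\varphi}$ near $o$.

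Your plan is to first obtain the bound $\Phi^D_{o,\max}\ge N_1\log|z|+O(1)$ by proving that $\Phi^D_{o,\max}$ is a \emph{local} Zhou weight and then citing Remark~\ref{rem-existence.single.pole}. But the gluing step in that argument already presupposes what you want. To extend a local competitor $\varphi'$ (defined on a neighborhood $V$ of $o$) to a global one via $\max\{\varphi'-C,\Phi^D_{o,\max}\}$, you need $\varphi'-C\le\Phi^D_{o,\max}$ on an annulus $V\setminus U$; since $\varphi'$ is bounded above there, this forces $\Phi^D_{o,\max}$ to be \emph{bounded below} on $V\setminus U$. Nothing in conditions (1)--(2) of Definition~\ref{def-global.Zhou.weight.in.intro} excludes the possibility that the polar set $\{\Phi^D_{o,\max}=-\infty\}$ contains a positive-dimensional analytic variety through $o$, in which case $\Phi^D_{o,\max}$ is unbounded below on every annulus around $o$ and no choice of $C$ makes the glued function plurisubharmonic. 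Ruling this out is exactly the logarithmic lower bound you are trying to reach, so the argument is circular. (A minor separate point: the extension does not ``coincide with $\varphi'-C$'' near $o$; from $\varphi'\ge\Phi^D_{o,\max}-C_0$ you only get $\max\{\varphi'-C,\Phi^D_{o,\max}\}\le\varphi'+C_0$, which is still enough for non-integrability, so this is a wording issue rather than a gap.)

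The paper sidesteps the circularity by proving the non-integrability directly from conditions (1)--(2), without any prior lower bound on $\Phi^D_{o,\max}$: strong openness (Theorem~\ref{thm-SOC}) upgrades condition (1) to integrability of $|f_0|^2e^{-2\varphi_0}e^{2N_0G_D}e^{-2(1+\varepsilon)\Phi^D_{o,\max}}$, and then Lemma~\ref{lem-integrable.to.integrable} shows that
\[
|f_0|^2e^{-2\varphi_0}e^{-2\Phi^D_{o,\max}}-|f_0|^2e^{-2\varphi_0}e^{-2\max\{\Phi^D_{o,\max},(N_0/\varepsilon)G_D\}}
\]
is integrable near $o$. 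Since the first term is not integrable by condition (2), neither is the second, and property (3) applied to $\max\{\Phi^D_{o,\max},(N_0/\varepsilon)G_D\}$ finishes the proof.
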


We list other propositions of global Zhou weights as follows.

The proposition below states that one can produce a global Zhou weight by a local Zhou weight.
\begin{Proposition}[Lemma 8.5 in \cite{BGMY23}]\label{prop-local.produce.global.single.pole}
        Let $D$ be a hyperconvex domain in $\mathbb{C}^n$ containing the origin $o$, and $\phi_{o,\max}$ a local Zhou weight related to $|f_0|^2e^{-2\varphi_0}$ near $o$. Denote that
        \begin{flalign*}
            \begin{split}
                \Phi^D_{o,\max}(\cdot):=\sup\big\{u(\cdot) : u\in\mathrm{PSH}^-(D), \ (f_0,o)\notin\mathcal{I}(\varphi_0+u)_o,& \\
                u\ge \phi_{o,\max}+O(1) \ \text{near} \ o&\big\}
            \end{split}
        \end{flalign*}
    is a global Zhou weight related to $|f_0|^2e^{-2\varphi_0}$ on $D$ satisfying that $\Phi^D_{o,\max}=\phi_{o,\max}+O(1)$ near $o$.
\end{Proposition}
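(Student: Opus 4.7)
My plan is to verify the three defining conditions of a global Zhou weight for $\Phi^D_{o,\max}$ together with the local equivalence $\Phi^D_{o,\max}=\phi_{o,\max}+O(1)$ near $o$, in four stages: (i) exhibit one concrete competitor so that the defining class is non-empty; (ii) show that its upper envelope lies in the class and is plurisubharmonic; (iii) verify the two integrability conditions and the maximality clause in Definition \ref{def-global.Zhou.weight.in.intro}; (iv) deduce the local equivalence from the uniqueness clause in Definition \ref{def-max.relat}.

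For stage (i) I would glue $\phi_{o,\max}-C$ with a large multiple of the pluricomplex Green function $G_D(o,\cdot)$. By Remark \ref{rem-existence.single.pole} there exists $N_*>0$ with $\phi_{o,\max}\ge N_*\log|z|+O(1)$ near $o$. Fix $N>N_*$; since $NG_D(o,z)=N\log|z|+O(1)$ near $o$, one has $\phi_{o,\max}-C-NG_D(o,\cdot)\ge(N_*-N)\log|z|+O(1)\to+\infty$ as $z\to o$, hence $\phi_{o,\max}-C>NG_D(o,\cdot)$ on a small ball $\{|z|<r'\}$, while by choosing $C$ large enough the opposite inequality $\phi_{o,\max}-C<NG_D(o,\cdot)$ holds on a neighborhood of a larger sphere $\{|z|=r\}$. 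Then
\[ w(z):=\max\{\phi_{o,\max}(z)-C,\,NG_D(o,z)\}\ \ (|z|<r), \qquad w(z):=NG_D(o,z)\ \ (z\in D,\ |z|\ge r)\]
agrees with $NG_D(o,\cdot)$ in a neighborhood of $\{|z|=r\}$, so it patches to an element of $\mathrm{PSH}^-(D)$ equal to $\phi_{o,\max}-C$ on $\{|z|<r'\}$; in particular $w\ge \phi_{o,\max}+O(1)$ near $o$ and $(f_0,o)\notin\mathcal{I}(\varphi_0+w)_o$ by Definition \ref{def-max.relat}(2), so $w$ belongs to the defining class.

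For stage (ii), the pointwise supremum of the defining class is bounded above by $0$, so its upper semicontinuous regularization $(\Phi^D_{o,\max})^{*}$ is plurisubharmonic on $D$ and agrees with $\Phi^D_{o,\max}$ off a pluripolar set. By Choquet's lemma I pick a countable increasing sequence $u_j$ from the class with $(\sup_j u_j)^{*}=(\Phi^D_{o,\max})^{*}$, and invoke the strong openness property \cite{GZ15a} to rule out $(f_0,o)\in\mathcal{I}(\varphi_0+(\Phi^D_{o,\max})^{*})_o$: if this held, then $(f_0,o)\in\mathcal{I}(p(\varphi_0+(\Phi^D_{o,\max})^{*}))_o$ for some $p>1$, and a H\"older-type comparison against the $u_j$'s would force the corresponding integrability at each step and contradict the exclusion of the $u_j$'s from the multiplier ideal. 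Combined with $(\Phi^D_{o,\max})^{*}\ge \Phi^D_{o,\max}\ge w\ge\phi_{o,\max}-C$ near $o$, this shows $(\Phi^D_{o,\max})^{*}$ is itself a competitor, forcing $(\Phi^D_{o,\max})^{*}\le\Phi^D_{o,\max}$ and hence $(\Phi^D_{o,\max})^{*}=\Phi^D_{o,\max}\in\mathrm{PSH}^-(D)$.

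For stage (iii), condition $(1)$ of Definition \ref{def-global.Zhou.weight.in.intro} follows from $\Phi^D_{o,\max}\ge\phi_{o,\max}-C$ near $o$ and Definition \ref{def-max.relat}$(1)$ via $|f_0|^2e^{-2\varphi_0}|z|^{2N_0}e^{-2\Phi^D_{o,\max}}\le e^{2C}|f_0|^2e^{-2\varphi_0}|z|^{2N_0}e^{-2\phi_{o,\max}}$; condition $(2)$ is the non-integrability established in (ii); condition $(3)$ is formal, since any $\tilde\varphi\in\mathrm{PSH}^-(D)$ with $\tilde\varphi\ge\Phi^D_{o,\max}$ on $D$ and $(f_0,o)\notin\mathcal{I}(\varphi_0+\tilde\varphi)_o$ satisfies $\tilde\varphi\ge\phi_{o,\max}+O(1)$ near $o$, hence is itself a competitor and gives $\tilde\varphi\le\Phi^D_{o,\max}$, so $\tilde\varphi=\Phi^D_{o,\max}$ on $D$. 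For stage (iv), the restriction of $\Phi^D_{o,\max}$ to a small neighborhood of $o$ satisfies $\Phi^D_{o,\max}\ge\phi_{o,\max}+O(1)$ near $o$ (from (i)) and $(f_0,o)\notin\mathcal{I}(\varphi_0+\Phi^D_{o,\max})_o$ (from (iii)); Definition \ref{def-max.relat}$(3)$ applied to $\varphi':=\Phi^D_{o,\max}$ then yields $\Phi^D_{o,\max}=\phi_{o,\max}+O(1)$ near $o$. The main obstacle is stage (ii): the condition ``$(f_0,o)$ is not in the multiplier ideal'' is not preserved a priori under upper regularization, and strong openness of the multiplier ideal sheaves must be used in an essential way to close this gap.
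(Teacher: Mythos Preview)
The paper does not prove this proposition itself---it is quoted as Lemma~8.5 of \cite{BGMY23}---but your outline matches the machinery the present paper deploys for the multipoled analogues (the proof of Remark~\ref{rem-existence} and of Proposition~\ref{prop-local.produce.global}): build a competitor, pass to an upper envelope via Choquet's lemma, and use a Guan--Zhou strong--openness type result to keep the non-integrability condition in the limit. Stages (i), (iii), (iv) are fine as written.

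Two points in stage (ii) need tightening. First, Choquet's lemma (Lemma~\ref{lem-Choquet}) does \emph{not} hand you an increasing sequence; it gives a countable subfamily $(v_j)$ with $(\sup_j v_j)^*=(\Phi^D_{o,\max})^*$. You must replace $v_j$ by $\tilde v_j:=\max\{v_1,\dots,v_j\}$ and check that $\tilde v_j$ is still in the class. This is where the hypothesis that $\phi_{o,\max}$ is a \emph{local} Zhou weight enters: since each $v_k\ge\phi_{o,\max}+O(1)$ and $(f_0,o)\notin\mathcal{I}(\varphi_0+v_k)_o$, Definition~\ref{def-max.relat}(3) forces $v_k=\phi_{o,\max}+O(1)$ near $o$, hence $\tilde v_j=\phi_{o,\max}+O(1)$ near $o$ and the non-integrability persists. (The paper, in the proof of Remark~\ref{rem-existence}, sidesteps this by first invoking Zorn's lemma to extract a maximal \emph{totally ordered} subfamily; your route is slightly more direct.)

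Second, your ``H\"older-type comparison'' does not work as stated: the inequality $u_j\le(\Phi^D_{o,\max})^*$ goes the wrong way to transfer integrability of $|f_0|^2e^{-2p(\varphi_0+(\Phi^D_{o,\max})^*)}$ down to $|f_0|^2e^{-2(\varphi_0+u_j)}$. The correct argument, once you have an \emph{increasing} sequence $\tilde v_j$ with $(f_0,o)\notin\mathcal{I}(\varphi_0+\tilde v_j)_o$ for every $j$, is to apply Lemma~\ref{lem-effect_GZ} together with Remark~\ref{rem-effect_GZ} (or, equivalently, Theorem~\ref{thm-SOC} in the form $\mathcal{I}(\varphi_0+\lim_j\tilde v_j)=\bigcup_j\mathcal{I}(\varphi_0+\tilde v_j)$): this immediately yields $(f_0,o)\notin\mathcal{I}(\varphi_0+(\Phi^D_{o,\max})^*)_o$, since $\lim_j\tilde v_j=(\Phi^D_{o,\max})^*$ almost everywhere by Lemma~\ref{lem-upper.reg.equal.a.e}. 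This is exactly how the paper closes the corresponding gap in the proof of Remark~\ref{rem-existence}.
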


Conversely, globals Zhou weights are also local Zhou weights.
\begin{Proposition}[Lemma 8.7 in \cite{BGMY23}]\label{prop-global.is.local.single.pole}
    Let $D$ be a hyperconvex domain in $\mathbb{C}^n$ containing the origin $o$, and $\Phi^D_{o,\max}$ a global Zhou weight related to $|f_0|^2e^{-2\varphi_0}$ on $D$. Then $\Phi_{o,\max}$ is also a local Zhou weight related to $|f_0|^2e^{-2\varphi_0}$ near $o$.
\end{Proposition}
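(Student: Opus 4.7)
The plan is to verify, in turn, conditions (1), (2), (3) of Definition \ref{def-max.relat} for the function $\Phi^{D}_{o,\max}$. Conditions (1) and (2) are literally identical to conditions (1) and (2) of Definition \ref{def-global.Zhou.weight.in.intro}, so no work is required there. All the content of the proposition lies in deriving the local uniqueness condition (3) from the global one.

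For this, I would fix a plurisubharmonic $\varphi'$ near $o$ with $\varphi'\ge \Phi^{D}_{o,\max}+O(1)$ near $o$ and $|f_{0}|^{2}e^{-2\varphi_{0}-2\varphi'}$ not integrable near $o$. The strategy is to extend (a translate of) $\varphi'$ to a competitor $\tilde\varphi$ in the global Definition \ref{def-global.Zhou.weight.in.intro}(3): a negative plurisubharmonic function on all of $D$ with $\tilde\varphi\ge\Phi^{D}_{o,\max}$ on $D$ and $|f_{0}|^{2}e^{-2\varphi_{0}-2\tilde\varphi}$ not integrable near $o$. Once this is done, the global condition (3) gives $\tilde\varphi=\Phi^{D}_{o,\max}$ on $D$, and unwinding the construction near $o$ will force $\varphi'=\Phi^{D}_{o,\max}+O(1)$ near $o$.

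To build $\tilde\varphi$ I would use the standard gluing trick with the maximum. Pick open sets $o\in V_{1}\Subset V\Subset D$ with $\varphi'$ defined and bounded above on $V$. After subtracting a large constant from $\varphi'$ (which does not affect the two hypotheses), one may assume $\varphi'<0$ on $V$ and $\varphi'\ge\Phi^{D}_{o,\max}-C$ on $V$. On $\overline{V}\setminus V_{1}$ the global Zhou weight $\Phi^{D}_{o,\max}$ is bounded below: by the pluricomplex Green function comparison $\Phi^{D}_{o,\max}\ge NG_{D}(o,\cdot)$ recalled just above, together with the local boundedness of $G_{D}(o,\cdot)$ on $D\setminus\{o\}$. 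Hence one can choose a constant $\alpha\gg 0$ with $\varphi'-C-\alpha\le \Phi^{D}_{o,\max}$ on $\overline{V}\setminus V_{1}$, and then set
\[
\tilde\varphi:=\begin{cases}\max\{\varphi'-C-\alpha,\;\Phi^{D}_{o,\max}\} & \text{on }V,\\ \Phi^{D}_{o,\max} & \text{on }D\setminus V_{1}.\end{cases}
\]
The two descriptions agree on the overlap $V\setminus V_{1}$, so $\tilde\varphi$ is a well-defined negative plurisubharmonic function on $D$ with $\tilde\varphi\ge\Phi^{D}_{o,\max}$ everywhere.

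It remains to check the non-integrability near $o$. Near $o$ both $\varphi'-C-\alpha\le \varphi'$ and $\Phi^{D}_{o,\max}\le \varphi'+C$ hold, so $\tilde\varphi\le \varphi'+C$ near $o$ and therefore
\[
|f_{0}|^{2}e^{-2\varphi_{0}-2\tilde\varphi}\ge e^{-2C}|f_{0}|^{2}e^{-2\varphi_{0}-2\varphi'}\quad\text{near }o,
\]
which is not integrable near $o$ by hypothesis. Definition \ref{def-global.Zhou.weight.in.intro}(3) now yields $\tilde\varphi\equiv\Phi^{D}_{o,\max}$ on $D$. Restricting to $V_{1}$ this forces $\varphi'-C-\alpha\le\Phi^{D}_{o,\max}$, i.e.\ $\varphi'\le\Phi^{D}_{o,\max}+(C+\alpha)$ near $o$; combined with the starting inequality $\varphi'\ge\Phi^{D}_{o,\max}-C$ near $o$, this is precisely $\varphi'=\Phi^{D}_{o,\max}+O(1)$ near $o$, as required.

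The one step that requires genuine input beyond bookkeeping is the gluing: one needs the local boundedness of $\Phi^{D}_{o,\max}$ on a compact annular region around $o$ in order to choose $\alpha$, and one needs to verify that the two pieces agree on the overlap so that the max construction indeed produces a plurisubharmonic function on the whole of $D$. Everything else is dictated by the shape of the two definitions.
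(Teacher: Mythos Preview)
Your argument is correct. The paper itself does not prove this statement; it is quoted as Lemma~8.7 of \cite{BGMY23} and used as a black box (for instance, in the proof of Proposition~\ref{prop-global.is.local} the multipoled case is reduced to this single-pole result via Lemma~\ref{lem-sigma.le.PhiP.le.min}). Your gluing construction---extend the local competitor $\varphi'$ to all of $D$ by taking $\max\{\varphi'-\text{const},\Phi^{D}_{o,\max}\}$ on a small ball and $\Phi^{D}_{o,\max}$ outside, using the local boundedness of $\Phi^{D}_{o,\max}$ away from $o$ to make the pieces agree on the collar---is exactly the natural and expected proof, and all the estimates check out.

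One cosmetic point: when you write ``$\varphi'\ge\Phi^{D}_{o,\max}-C$ on $V$'' you are implicitly choosing $V$ small enough that the hypothesis $\varphi'\ge\Phi^{D}_{o,\max}+O(1)$ (which is only given \emph{near} $o$) holds on all of $V$; it would be slightly cleaner to say this explicitly when you introduce $V$.
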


The following proposition is a global version of Proposition \ref{prop-psi.le.sigma.phi.local}
\begin{Proposition}[Lemma 8.8 in \cite{BGMY23}]\label{prop-psi.le.sigma.phi.single.pole}
    Let $D$ be a hyperconvex domain in $\mathbb{C}^n$ containing the origin $o$, and $\Phi^D_{o,\max}$ a global Zhou weight on $D$. Then for any $\psi\in\mathrm{PSH}^-(D)$, it holds on $D$ that
	\[\psi\le\sigma_o(\psi,\Phi^{D}_{o,\max})\Phi^{D}_{o,\max}.\]
\end{Proposition}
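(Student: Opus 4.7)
The plan is to leverage clause (3) in the definition of the global Zhou weight via a $\max$-construction, reducing the global inequality to the local one supplied by Proposition \ref{prop-psi.le.sigma.phi.local}. Set $\sigma:=\sigma_{o}(\psi,\Phi^{D}_{o,\max})$; if $\sigma=0$ the claim reduces to $\psi\le 0$, which is immediate, so I assume $\sigma>0$. Since $\Phi^{D}_{o,\max}$ is itself a local Zhou weight near $o$ by Proposition \ref{prop-global.is.local.single.pole}, Proposition \ref{prop-psi.le.sigma.phi.local} supplies a constant $C\ge 0$ and a neighborhood $U_{0}$ of $o$ with $\psi\le\sigma\Phi^{D}_{o,\max}+C$ on $U_{0}$.

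For each $\epsilon>0$ I would then consider
\[\tilde\varphi_{\epsilon}:=\max\bigl\{(1+\epsilon)\psi/\sigma,\ \Phi^{D}_{o,\max}\bigr\},\]
which lies in $\mathrm{PSH}^{-}(D)$ (both arguments are PSH and $\le 0$) and satisfies $\tilde\varphi_{\epsilon}\ge\Phi^{D}_{o,\max}$ on $D$. Using the pole property $\Phi^{D}_{o,\max}(z)\to -\infty$ as $z\to o$, one shrinks to a neighborhood $U_{\epsilon}\subset U_{0}$ on which $\Phi^{D}_{o,\max}\le -(1+\epsilon)C/(\sigma\epsilon)$; feeding this into the local estimate yields $(1+\epsilon)\psi/\sigma\le\Phi^{D}_{o,\max}$ on $U_{\epsilon}$, so $\tilde\varphi_{\epsilon}$ coincides with $\Phi^{D}_{o,\max}$ there. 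This coincidence transfers the non-integrability of $|f_{0}|^{2}e^{-2\varphi_{0}-2\Phi^{D}_{o,\max}}$ to $\tilde\varphi_{\epsilon}$, so clause (3) forces $\tilde\varphi_{\epsilon}=\Phi^{D}_{o,\max}$ on all of $D$. Hence $\psi\le \sigma\Phi^{D}_{o,\max}/(1+\epsilon)$ on $D$, and letting $\epsilon\to 0^{+}$ (and using $\Phi^{D}_{o,\max}\le 0$, so that $\sigma\Phi^{D}_{o,\max}/(1+\epsilon)\downarrow\sigma\Phi^{D}_{o,\max}$) yields the asserted inequality.

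The main obstacle will be verifying the pole property $\Phi^{D}_{o,\max}(z)\to -\infty$ as $z\to o$ used above. Clause (2) of the definition only implies unboundedness from below on each neighborhood of $o$, which is strictly weaker than pointwise convergence to $-\infty$. I would extract the stronger convergence from the local-Zhou-weight theory in \cite{BGMY23}, which supplies continuity of $e^{\phi_{o,\max}}$ at the pole with $e^{\phi_{o,\max}(o)}=0$; Proposition \ref{prop-global.is.local.single.pole} then transfers this to $\Phi^{D}_{o,\max}$, at which point the $\max$-construction above goes through.
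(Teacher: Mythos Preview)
Your proof is correct, but it is more elaborate than necessary. The paper does not reprove this single-pole statement (it is quoted from \cite{BGMY23}), but it does prove the multipoled analogue, Proposition~\ref{prop-psi.le.sigmaZ.Phi}, and the argument there specializes verbatim to the single-pole case. The comparison is instructive.

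The paper dispenses with the $(1+\epsilon)$ perturbation and works directly with
\[
\Psi:=\max\Bigl\{\tfrac{1}{\sigma}\psi,\ \Phi^{D}_{o,\max}\Bigr\}.
\]
From Proposition~\ref{prop-psi.le.sigma.phi.local} one has $\psi/\sigma\le\Phi^{D}_{o,\max}+C$ near $o$, hence $\Psi\le\Phi^{D}_{o,\max}+C$ near $o$. This \emph{bounded} comparison already suffices: it gives $e^{-2\Psi}\ge e^{-2C}e^{-2\Phi^{D}_{o,\max}}$ near $o$, so the non-integrability of $|f_{0}|^{2}e^{-2\varphi_{0}-2\Phi^{D}_{o,\max}}$ transfers to $\Psi$. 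Clause~(3) then yields $\Psi=\Phi^{D}_{o,\max}$ on $D$, i.e.\ $\psi\le\sigma\,\Phi^{D}_{o,\max}$, with no limiting step.

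Your route, by contrast, aims for \emph{exact} coincidence $\tilde\varphi_{\epsilon}=\Phi^{D}_{o,\max}$ on a neighborhood of $o$, which is why you need the pole property $\Phi^{D}_{o,\max}(z)\to-\infty$ and the subsequent $\epsilon\to 0$ limit. Both steps are avoidable once you notice that clause~(3) only asks for non-integrability, and non-integrability is stable under bounded perturbations. The upshot: your ``main obstacle'' is self-imposed, and the continuity of $e^{\phi_{o,\max}}$ at the pole (which, incidentally, in the paper is stated only for \emph{bounded} hyperconvex $D$) is not needed at all.
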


Proposition \ref{prop-maximal.single.pole} and Proposition \ref{prop-exhaustion.conti.single.pole} show the maximality and continuity of global Zhou weights respectively.

\begin{Proposition}[Proposition 1.18 in \cite{BGMY23}]\label{prop-maximal.single.pole}
    Let $D$ be a hyperconvex domain in $\mathbb{C}^n$ containing the origin $o$, and $\Phi^D_{o,\max}$ a global Zhou weight on $D$. Then $\Phi^{D}_{o,\max}\in L_{\mathrm{loc}}^{\infty}(D\backslash\{o\})$, and 
		\[(dd^c\Phi^{D}_{o,\max})^n=0 \ \text{on} \ D\setminus\{o\}.\]
\end{Proposition}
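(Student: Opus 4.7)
The plan is to split the proof into two independent steps: first, local boundedness on $D\setminus\{o\}$, and second, the Monge--Amp\`ere vanishing $(dd^c\Phi^D_{o,\max})^n=0$ on $D\setminus\{o\}$.

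For the first step, I would invoke the Green-function comparison recalled just above (Lemma~8.3 in \cite{BGMY23}): there exists $N>0$ with $\Phi^D_{o,\max}\ge NG_D(o,\cdot)$ on $D$. Since $D$ is hyperconvex, $G_D(o,\cdot)$ is continuous on $D\setminus\{o\}$ and therefore bounded below on every compact $K\Subset D\setminus\{o\}$. Combined with $\Phi^D_{o,\max}\le 0$, this immediately gives $\Phi^D_{o,\max}\in L^\infty_{\mathrm{loc}}(D\setminus\{o\})$.

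For the second step, fix $z_0\in D\setminus\{o\}$ and a small ball $B\Subset D\setminus\{o\}$ centered at $z_0$. By the first step, $\Phi^D_{o,\max}$ is bounded on $\overline{B}$. Consider the Perron--Bremermann envelope
\[h(z):=\sup\big\{u(z):u\in\mathrm{PSH}(B)\cap L^\infty(B),\ \limsup_{z\to\zeta}u(z)\le\Phi^D_{o,\max}(\zeta)\text{ for all }\zeta\in\partial B\big\},\]
and let $h^*$ be its upper semicontinuous regularization. Then $h^*$ is a bounded plurisubharmonic function on $B$ with $(dd^c h^*)^n=0$ by Bedford--Taylor theory, and taking $u=\Phi^D_{o,\max}|_B$ in the supremum yields $h^*\ge\Phi^D_{o,\max}$ on $B$. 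Define
\[\tilde\Phi(z):=\begin{cases}h^*(z), & z\in B,\\ \Phi^D_{o,\max}(z), & z\in D\setminus B.\end{cases}\]
A standard gluing argument, using upper semicontinuity of $\Phi^D_{o,\max}$ and the boundary constraint in the definition of $h$, shows $\tilde\Phi\in\mathrm{PSH}^-(D)$. By construction $\tilde\Phi\ge\Phi^D_{o,\max}$ on $D$ and $\tilde\Phi\equiv\Phi^D_{o,\max}$ on a neighborhood of $o$, so condition~(2) of Definition~\ref{def-global.Zhou.weight.in.intro} is preserved by $\tilde\Phi$. Property~(3) of the global Zhou weight then forces $\tilde\Phi\equiv\Phi^D_{o,\max}$, i.e.\ $h^*=\Phi^D_{o,\max}$ on $B$, whence $(dd^c\Phi^D_{o,\max})^n=0$ on $B$. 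As $z_0$ was arbitrary, this completes the proof.

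The main technical obstacle is the gluing step: since $\Phi^D_{o,\max}$ is only upper semicontinuous (not a priori continuous) on $\partial B$, one must verify that $h^*$ actually satisfies $\limsup_{z\to\zeta}h^*(z)\le\Phi^D_{o,\max}(\zeta)$ for $\zeta\in\partial B$, so the extension outside $B$ remains plurisubharmonic across $\partial B$. This can be handled by the standard negligibility result for $\{h<h^*\}$ together with maximum-principle considerations, or by approximating $\Phi^D_{o,\max}|_{\partial B}$ from above by continuous functions, solving a sequence of genuine Bedford--Taylor Dirichlet problems, and passing to a decreasing limit.
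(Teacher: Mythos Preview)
Your Step~1 matches the paper exactly: the Green-function lower bound (Lemma~8.3 of \cite{BGMY23}) gives local boundedness away from the pole.

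For Step~2 the paper takes a shorter route. In the multipoled analogue (Proposition~\ref{prop-maximal}) it invokes directly the Bedford--Taylor/B\l ocki characterization recalled as Lemma~\ref{lem-maximal.ddcn=0}: a locally bounded plurisubharmonic function on $\Omega$ has vanishing Monge--Amp\`ere mass iff it dominates every plurisubharmonic competitor that agrees with it outside a compact set. With $\Omega=D\setminus\{o\}$, this criterion is verified in one stroke: given $u\in\mathrm{PSH}(\Omega)$ with $u\le\Phi^D_{o,\max}$ outside a compact $K\subset\Omega$, the function $w:=\max(u,\Phi^D_{o,\max})$ equals $\Phi^D_{o,\max}$ on a punctured neighbourhood of $o$, hence extends plurisubharmonically across $o$, lies in $\mathrm{PSH}^-(D)$ (by the maximum principle on a compact neighbourhood of $K$), and keeps condition~(2); property~(3) then forces $w=\Phi^D_{o,\max}$, so $u\le\Phi^D_{o,\max}$. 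No envelope, no Dirichlet problem, no boundary-matching.

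Your Perron--Bremermann route is also correct, but it trades that one-line $\max$-argument for the gluing obstacle you flag. The obstacle is genuine yet resolvable: for any continuous $g\ge\Phi^D_{o,\max}|_{\partial B}$, the harmonic extension $H_g$ dominates every competitor in your envelope by the subharmonic maximum principle, hence $h^*\le H_g$; since $H_g(\zeta)=g(\zeta)$ and the infimum of such $g(\zeta)$ equals $\Phi^D_{o,\max}(\zeta)$ by upper semicontinuity, you get $\limsup_{z\to\zeta}h^*(z)\le\Phi^D_{o,\max}(\zeta)$ and the standard gluing lemma applies. Your fix~(b) is in this spirit but needs care: gluing each individual $h_k$ (with boundary data $g_k\ge\Phi^D_{o,\max}$) fails upper semicontinuity across $\partial B$, so one must pass to the limit \emph{before} gluing. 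The paper's use of Lemma~\ref{lem-maximal.ddcn=0} sidesteps all of this.
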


\begin{Proposition}[Proposition 1.19 in \cite{BGMY23}]\label{prop-exhaustion.conti.single.pole}
    Let $D$ be a bounded hyperconvex domain in $\mathbb{C}^n$ containing the origin $o$, and $\Phi^D_{o,\max}$ a global Zhou weight on $D$. Then $e^{\Phi_{o,\max}^D}$ is continuous on $D$, and $\Phi_{o,\max}^D(z)\to 0$ as $z\to\partial D$.
\end{Proposition}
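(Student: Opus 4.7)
I would establish the two assertions in turn, treating the boundary exhaustion first since it feeds into the continuity argument. For the exhaustion, the key input is the comparison bound $\Phi^D_{o,\max}\ge N G_D(o,\cdot)$ on $D$ for some $N>0$, furnished by ``Lemma 8.3 in [BGMY23]'' cited just above. Since $D$ is bounded and hyperconvex, $G_D(o,\cdot)$ is continuous on $D\setminus\{o\}$ with $G_D(o,z)\to 0$ as $z\to\partial D$. Sandwiched between $N G_D(o,z)$ and $0$, the weight $\Phi^D_{o,\max}(z)$ must tend to $0$ at $\partial D$.

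For continuity of $e^{\Phi^D_{o,\max}}$, upper semi-continuity is automatic from plurisubharmonicity, so the task is lower semi-continuity. I would approximate $\Phi^D_{o,\max}$ from below by continuous plurisubharmonic functions, mimicking the single-pole case of Theorem \ref{thm-approximation}. For each $m\in\mathbb{N}_+$ set
\[\mathscr{E}_m(D):=\{f\in\mathcal{O}(D) : \sup_D|f|\le 1,\ (f,o)\in\mathcal{I}(m\Phi^D_{o,\max})_o\},\]
which is a compact subset of $\mathcal{O}(D)$ by Montel's theorem plus closedness of the stalk condition under uniform convergence on compacta, and define its extremal function
\[\phi_m(z):=\sup_{f\in\mathscr{E}_m(D)}\tfrac{1}{m}\log|f(z)|,\]
for which $e^{\phi_m}$ is continuous on $D$ by a standard extremal-function argument. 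Theorem \ref{thm-jump.no.Zhou.no.control} yields $\sigma_o(\log|f|,\Phi^D_{o,\max})\ge m-C$ for every $f\in\mathscr{E}_m(D)$ with a uniform constant $C$, so Proposition \ref{prop-psi.le.sigma.phi.single.pole} gives the global bound $\phi_m\le(1-C/m)\Phi^D_{o,\max}$ on $D$, hence $\limsup_m\phi_m\le\Phi^D_{o,\max}$ pointwise.

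For the reverse $\liminf_m\phi_m\ge\Phi^D_{o,\max}$ at a prescribed $z_0\in D$, I would produce $f\in\mathscr{E}_m(D)$ with $\tfrac{1}{m}\log|f(z_0)|$ close to $\Phi^D_{o,\max}(z_0)$ by an Ohsawa--Takegoshi $L^2$-extension from $\{z_0\}$ with weight $m\Phi^D_{o,\max}$, then use the boundary exhaustion just established to upgrade the $L^2$-bound to a sup-norm bound via H\"ormander/Skoda-type estimates, so that the rescaled section lies in $\mathscr{E}_m(D)$. Passing to the upper semi-continuous regularization of $\sup_{k\ge m}\phi_k$ produces an increasing sequence of continuous psh functions tending pointwise to $\Phi^D_{o,\max}$; increasing limits of continuous functions are lower semi-continuous, and combined with USC this yields continuity of $e^{\Phi^D_{o,\max}}$ on $D\setminus\{o\}$. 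At $o$ itself, condition (2) of Definition \ref{def-global.Zhou.weight.in.intro} forces $\Phi^D_{o,\max}$ to be unbounded below near $o$, so $\Phi^D_{o,\max}(o)=-\infty$ by USC and $e^{\Phi^D_{o,\max}}$ is continuous there automatically. The principal obstacle is the construction just sketched: passing from local $L^2$-extension from a point to a global holomorphic section of $\sup_D|f|\le 1$ with prescribed singularity at $o$ is exactly where the hyperconvexity of $D$ and the boundary decay of $\Phi^D_{o,\max}$ become indispensable.
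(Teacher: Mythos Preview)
Your exhaustion argument is correct and matches the paper's approach: the comparison $\Phi^D_{o,\max}\ge N G_D(o,\cdot)$ together with the known boundary behaviour of the pluricomplex Green function on a bounded hyperconvex domain gives $\Phi^D_{o,\max}(z)\to 0$ as $z\to\partial D$.

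For continuity, your strategy diverges from the paper's and contains a genuine gap. The paper (see the proof of Proposition~\ref{prop-continuous.exhaustive}, which specialises to the single-pole case) does \emph{not} work with the sup-norm class $\mathscr{E}_m(D)$. Instead it uses Demailly's $L^2$-weighted Bergman approximation (Lemma~\ref{lem-Dem.approx}): with
\[
\Phi_m(z)=\sup\Big\{\tfrac{1}{m}\log|f(z)| : \int_D|f|^2e^{-2m\Phi^D_{o,\max}}\le 1\Big\},
\]
the lower bound $\Phi_m\ge\Phi^D_{o,\max}-C_1/m$ is furnished \emph{for free} by Ohsawa--Takegoshi, so no ``upgrade from $L^2$ to sup-norm'' is ever needed. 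The upper bound is then obtained by a sub-domain comparison: for $D_\epsilon\Subset D$ one has $\Phi^{D_\epsilon}_{o,\max}+\delta_\epsilon\le\Phi^D_{o,\max}$ on $D_\epsilon$ with $\delta_\epsilon\to 0$; the sub-mean-value inequality converts the $L^2$ bound into a sup bound on $\overline{D_\epsilon}$, and then Theorem~\ref{thm-jump.no.Zhou.no.control} plus Proposition~\ref{prop-psi.le.sigma.phi.single.pole} applied on $D_\epsilon$ yield $\Phi_m\le\frac{m-C_2}{m}(\Phi^D_{o,\max}-\delta_\epsilon)+O(1/m)$ on $D_\epsilon$. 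The resulting two-sided sandwich gives locally uniform convergence of the continuous functions $\Phi_m$ to $\Phi^D_{o,\max}$ on $D\setminus\{o\}$.

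Your proposed step ``use the boundary exhaustion to upgrade the $L^2$-bound to a sup-norm bound via H\"ormander/Skoda-type estimates'' is the problem: on a general bounded hyperconvex domain an $L^2$ holomorphic function need not be bounded, and neither H\"ormander's nor Skoda's estimates produce sup-norm control. The boundary decay of $\Phi^D_{o,\max}$ does not help either, since the Ohsawa--Takegoshi section is controlled in $L^2(e^{-2m\Phi^D_{o,\max}})$, and $e^{-2m\Phi^D_{o,\max}}\to 1$ near $\partial D$. You correctly flag this as the principal obstacle, but the resolution you sketch does not work; what is actually needed is precisely the sub-domain trick the paper employs. (Incidentally, your final deduction---pointwise convergence of continuous $\phi_m$ to $\Phi^D_{o,\max}$ implies continuity of the limit---is also insufficient without the uniform estimates that the sandwich provides.)
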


\subsection{Other required lemmas.}\label{subsec-required.lemma}

First, we recall the strong openness property of multiplier ideal sheaves, which is conjectured by Demailly, and proved by Guan-Zhou.
\begin{Theorem}[\cite{GZ15a}, see also \cite{Lem14} and \cite{Hiep14}]\label{thm-SOC}
    Let $(\varphi_{j})$ be a sequence of plurisubharmonic functions increasing convergence to a plurisubharmonic function $\varphi$. Then $\mathcal{I}(\varphi)=\bigcup_{j}\mathcal{I}(\varphi_{j})$.
\end{Theorem}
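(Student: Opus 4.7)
The statement decomposes into two inclusions. The inclusion $\bigcup_j \mathcal{I}(\varphi_j) \subseteq \mathcal{I}(\varphi)$ is elementary: for any germ $(f,z_0) \in \mathcal{I}(\varphi_j)_{z_0}$, the monotonicity $\varphi_k \ge \varphi_j$ for $k \ge j$ gives $|f|^2 e^{-2\varphi_k} \le |f|^2 e^{-2\varphi_j}$ pointwise near $z_0$; the dominated convergence theorem with the integrable majorant $|f|^2 e^{-2\varphi_j}$ then yields $|f|^2 e^{-2\varphi} \in L^1_{\mathrm{loc}}$ near $z_0$, hence $(f,z_0) \in \mathcal{I}(\varphi)_{z_0}$.

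The reverse inclusion $\mathcal{I}(\varphi) \subseteq \bigcup_j \mathcal{I}(\varphi_j)$ is the deep strong openness property. The plan is to reduce to the classical local formulation: if $|f|^2 e^{-2\varphi}$ is integrable on a neighborhood of $z_0$, then $|f|^2 e^{-2(1+\varepsilon)\varphi}$ is integrable on a smaller neighborhood for some $\varepsilon>0$ (after localizing so that $\varphi\le 0$). The sequence version follows from this by comparing $\varphi_j$ with the canonical rescalings $(1+1/k)\varphi$: since $\varphi_j \nearrow \varphi$ and $(1+1/k)\varphi<\varphi$ strictly on $\{\varphi<0\}$, one has $\varphi_j \ge (1+1/k)\varphi$ outside a set that shrinks to a $\{\varphi=-\infty\}$-neighborhood as $j\to\infty$, and the remaining thin contribution is absorbed via the integrability $f\in\mathcal{I}((1+1/k)\varphi)$ together with a standard truncation argument.

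For the classical strong openness I would follow the Guan--Zhou approach based on the sharp Ohsawa--Takegoshi $L^2$ extension theorem. Working on a polydisc $\Delta \ni z_0$ with $\int_\Delta |f|^2 e^{-2\varphi} < \infty$, for each truncation level $t > 0$ apply the $L^2$ extension with optimal constant (the sharp value established by B\l{}ocki and Guan--Zhou) to produce a holomorphic function $F_t$ on $\Delta$ agreeing with $f$ at $z_0$ (or on a suitable hypersurface section through $z_0$) and satisfying
\[
\int_\Delta |F_t|^2 e^{-2\varphi} \;\le\; C_{\mathrm{sharp}} \int_{\{\varphi < -t\}} |f|^2 e^{-2\varphi},
\]
the right side vanishing as $t \to \infty$. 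Iterating the extension against a weight rescaled by $\varphi \mapsto (1+\delta)\varphi$ with a geometric choice of $\delta$ and of truncation levels $t_j$, one produces a uniform $L^2$-estimate for $F_{t_j}$ against $e^{-2(1+\varepsilon)\varphi}$, which passes to a holomorphic limit equal to $f$ and gives $f \in \mathcal{I}((1+\varepsilon)\varphi)$.

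The main obstacle is precisely the sharpness of the extension constant. A non-optimal constant only recovers the weaker classical openness (positivity of the complex singularity exponent $c_\varphi(z_0)$), since each iteration step accrues a multiplicative loss that fails to telescope into a convergent geometric series. Only the sharp constant balances the truncation tail against the rescaling of the weight and produces the strict gain $\varepsilon>0$ needed for strong openness. The technical weight of the proof is therefore concentrated in establishing this sharpness, which is carried out via a carefully chosen family of plurisubharmonic weights on $\Delta \times \mathbb{C}$ of Donnelly--Fefferman--Berndtsson type. Alternative routes, such as Lempert's direct approach via Berndtsson positivity of direct images or Pham Hi\^ep's via Demailly regularization combined with a H\"older-type estimate, repackage the sharpness differently but rely on analogous refined positivity inputs.
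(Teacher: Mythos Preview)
The paper does not supply a proof of this theorem: it is quoted from the literature (Guan--Zhou \cite{GZ15a}, with alternative proofs by Lempert and Hiep) and used as a black box. There is therefore no ``paper's own proof'' to compare against; your proposal is in effect a sketch of the argument in the cited reference \cite{GZ15a}, and at that level it is broadly accurate in identifying the sharp Ohsawa--Takegoshi extension as the decisive input.

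One point in your outline deserves care. Your reduction from the $(1+\varepsilon)$-form of strong openness to the increasing-sequence form is not as straightforward as you suggest: knowing $f\in\mathcal{I}\big((1+\varepsilon)\varphi\big)$ does \emph{not} immediately give $f\in\mathcal{I}(\varphi_j)$ for large $j$, because the pointwise inequality $\varphi_j\ge(1+\varepsilon)\varphi$ need not hold uniformly on any neighborhood of the pole set, and the ``truncation argument'' you allude to requires an additional quantitative input (essentially the effectiveness result of \cite{GZ15b}, recorded here as Lemma~\ref{lem-effect_GZ}, together with the Noetherian stabilization of the chain $\mathcal{I}(\varphi_j)$). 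The original Guan--Zhou paper in fact establishes the sequence version directly rather than via this reduction. This is a genuine, if repairable, gap in your sketch; the identification of the sharp extension constant as the crux is correct.
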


In the following, we recall a generalization of the strong openness property.

Let $\{\phi_{m}\}_{m\in\mathbb{N}^{+}}$ be a sequence of negative plurisubharmonic functions on $\Delta^{n}$,
	which is convergent to a negative Lebesgue measurable function $\phi$ on $\Delta^{n}$ in Lebesgue measure.
	
	Let $f$ be a holomorphic function near $o$, and let $I$ be an ideal of $\mathcal{O}_{o}$.
	We denote 
	\[C_{f,I}(U):=\inf\left\{\int_{U} |\tilde{f}|^{2} :(\tilde{f}-f,o)\in I \ \& \ \tilde{f}\in\mathcal{O}(U)\right\},\]
	where $U\subseteq \Delta^{n}$ is a domain with $o\in U$.
	Especially, if $I=\mathcal{I}(\phi)_{o}$, we denote $C_{f,\phi}(U):=C_{f,I}(U)$.
	
	In \cite{GZ15b}, Guan-Zhou presented the following lower semicontinuity property for
	plurisubharmonic functions with a multiplier.
	
	\begin{Lemma}[\cite{GZ15b}]\label{lem-effect_GZ}
		Let $f_0$ be a holomorphic function near $o$.
		Assume that for any small enough neighborhood $U$ of $o$,
		the pairs $(f_0,\phi_{m})$ $(m\in\mathbb{N}^+)$ satisfies
		\begin{equation}
			\label{eq-C.f.phi.U}
			\inf_{m}C_{f_0, 2\varphi_0+\phi_{m}}(U)>0.
		\end{equation}
		Then $|f_{0}|^{2}e^{-2\varphi_{0}}e^{-\phi}$ is not integrable near $o$.
	\end{Lemma}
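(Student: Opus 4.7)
I would argue by contradiction. Suppose that $|f_{0}|^{2}e^{-2\varphi_{0}-\phi}$ is integrable on a small polydisc $U\ni o$. The plan is then to exhibit, for each sufficiently large $m$, a holomorphic $\tilde f_m\in\mathcal{O}(U)$ such that $(\tilde f_m - f_0, o)\in\mathcal{I}(2\varphi_0+\phi_m)_o$ and $\|\tilde f_m\|_{L^2(U)}^2\to 0$; this directly contradicts the positivity hypothesis \eqref{eq-C.f.phi.U}, since $\tilde f_m$ would be an admissible competitor whose $L^2$-mass is arbitrarily small.

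The construction of $\tilde f_m$ I would carry out by an Ohsawa--Takegoshi-type $L^2$ extension, in the sharp form that underlies Guan--Zhou's proof of the strong openness property. Introduce the truncation $\phi_m^t:=\max\{\phi_m,-t\}$ for a large parameter $t$ and an auxiliary plurisubharmonic exhaustion $\psi$ of $U$. Extending the germ of $f_0$ from the shrinking sublevel set $\{\psi<-t\}$ to $U$ with the weight $2\varphi_0+\phi_m^t$ yields a holomorphic $\tilde f_{m,t}\in\mathcal{O}(U)$ satisfying an estimate of the form
\[\int_U|\tilde f_{m,t}|^{2}e^{-2\varphi_{0}-\phi_m^t}\,\leq\,C\int_{\{\psi<-t\}}|f_0|^2 e^{-2\varphi_{0}-\phi_m^t},\]
with $C$ depending only on $U$. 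The decisive feature, provided by the optimal constant in the extension theorem, is that only a \emph{single} factor $e^{-\phi_m^t}$ appears on both sides (rather than $e^{-2\phi_m^t}$). A limit $t\to\infty$ then produces $\tilde f_m$ with the correct ideal membership, and because $\phi_m\leq 0$ one has $\|\tilde f_m\|_{L^2(U)}^{2}\leq\int_U|\tilde f_m|^{2}e^{-\phi_m}$, which is bounded by the right-hand integral above (taken with $\phi_m$ in place of $\phi_m^t$).

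It then remains to show that the right-hand integral tends to $0$. Convergence in measure $\phi_m\to\phi$ delivers, along a subsequence, a.e.\ convergence, so $|f_0|^2 e^{-2\varphi_0-\phi_m}\to |f_0|^2 e^{-2\varphi_0-\phi}$ a.e.; combined with the standing integrability of $|f_0|^2 e^{-2\varphi_0-\phi}$ on $U$ and a Vitali uniform-integrability argument, one obtains smallness of the integral over the shrinking sets $\{\psi<-t\}$ uniformly in $m$ (along the subsequence). A diagonal choice $t=t(m)\to\infty$ then gives $\|\tilde f_m\|_{L^2(U)}\to 0$, completing the contradiction.

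The hard part is the passage to the limit, because $e^{-\phi_m}$ is not dominated by any integrable function and convergence in measure behaves poorly under the exponential. The uniform-integrability step, combined with a careful localization near $o$, is where the optimal constant in the $L^2$ extension earns its keep: it is precisely this that upgrades the strong hypothesis on $\mathcal{I}(2\varphi_0+\phi_m)$ (which encodes $e^{-4\varphi_0-2\phi_m}$) to the weaker non-integrability of $|f_0|^{2}e^{-2\varphi_{0}-\phi}$ in the conclusion.
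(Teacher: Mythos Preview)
The paper does not prove this lemma; it is quoted from \cite{GZ15b} as a known input (the ``effectiveness'' form of strong openness due to Guan--Zhou), so there is no in-paper argument to compare against.

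Measured against the Guan--Zhou proof, your overall strategy is the right one: argue by contradiction, and use the optimal-constant $L^2$ extension theorem to manufacture holomorphic competitors $\tilde f_m$ with $(\tilde f_m-f_0,o)\in\mathcal{I}(2\varphi_0+\phi_m)_o$ and $\|\tilde f_m\|_{L^2(U)}\to 0$. Where your sketch becomes imprecise is the construction step. Ohsawa--Takegoshi does not ``extend the germ of $f_0$ from the sublevel set $\{\psi<-t\}$'' to $U$; it extends holomorphic data from a closed subvariety, and the displayed inequality you wrote is not a consequence of any standard form of the theorem. In \cite{GZ15b} the competitor is built by iterated extension from hyperplane slices (induction on dimension), and the sharp one-variable constant is exactly what turns integrability of $|f_0|^2e^{-2\varphi_0-\phi}$ into an upper bound on $C_{f_0,2\varphi_0+\phi_m}(U)$ in terms of a sublevel-set integral of $|f_0|^2e^{-2\varphi_0-\phi_m}$. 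Your final limit step is also fragile as written: convergence in measure of $\phi_m\to\phi$ gives no majorant for $e^{-\phi_m}$, so a generic Vitali argument does not close the gap; in the actual proof this is handled by the specific quantitative shape of the Guan--Zhou estimate rather than by soft uniform integrability.
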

	
	The Noetherian property of multiplier ideal sheaves (see \cite{CADG}) shows that
	\begin{Remark}\label{rem-effect_GZ}
		Assume that
		
		(1) $\phi_{m+1}\geq\phi_{m}$ holds for any $m$;
		
		(2) $|f_{0}|^{2}e^{-2\varphi_{0}}e^{-\phi_{m}}$ is not integrable near $o$ for any $m$.
		
		Then inequality \eqref{eq-C.f.phi.U} holds.
	\end{Remark}

    The lemma below is also needed.
\begin{Lemma}[see \cite{AMAG}, or \cite{BGMY23} Lemma 3.1]\label{lem-integrable.to.integrable}
    Let $u$ and $v$ be Lebesgue measurable functions with local upper-bound near $o$. Let $g$ be a nonnegative Lebesgue measurable function near $o$. Assume that $g^{2}e^{2(l_{1}v-(1+l_{2})u)}$ is integrable near $o$, where $l_{1},l_{2}>0$. Then
    $g^{2}e^{-2u}-g^{2}e^{-2\max\big\{u,\frac{l_{1}}{l_{2}}v\big\}}$ is also integrable near $o$.
\end{Lemma}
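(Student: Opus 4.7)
The plan is to decompose the domain into the two sets determined by which argument attains the maximum, and then to leverage the defining inequality of the relevant set to dominate $g^{2}e^{-2u}$ by the hypothesized integrable function.

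First I would observe that since $\max\{u,\tfrac{l_{1}}{l_{2}}v\}\ge u$, the function $g^{2}e^{-2u}-g^{2}e^{-2\max\{u,\frac{l_{1}}{l_{2}}v\}}$ is nonnegative, so integrability is equivalent to a finite upper bound by something integrable. Split a small neighborhood of $o$ into $A:=\{u\ge \tfrac{l_{1}}{l_{2}}v\}$ and $B:=\{u<\tfrac{l_{1}}{l_{2}}v\}$. On $A$ the maximum equals $u$ and the integrand vanishes identically, contributing nothing.

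On $B$ the maximum equals $\tfrac{l_{1}}{l_{2}}v$, and the integrand is bounded above by $g^{2}e^{-2u}$. The key algebraic step is that the defining inequality $u<\tfrac{l_{1}}{l_{2}}v$ on $B$ rearranges to $l_{2}u<l_{1}v$, hence $-2u<2(l_{1}v-(1+l_{2})u)$, so $e^{-2u}\le e^{2(l_{1}v-(1+l_{2})u)}$ pointwise on $B$. Combining these two cases yields
\[
0\le g^{2}e^{-2u}-g^{2}e^{-2\max\{u,\frac{l_{1}}{l_{2}}v\}}\le g^{2}e^{2(l_{1}v-(1+l_{2})u)}\cdot\mathbf{1}_{B}\le g^{2}e^{2(l_{1}v-(1+l_{2})u)}
\]
on a neighborhood of $o$, and the right-hand side is integrable by hypothesis, which concludes the argument.

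There is no serious obstacle here; the statement is essentially an elementary domination argument. The only subtlety to watch is that the upper-boundedness of $u$ and $v$ near $o$ (included in the assumptions) ensures that all the pointwise estimates above are applied to genuinely finite-valued functions on the relevant neighborhood, so no measurability or finiteness issues arise before passing to the integral.
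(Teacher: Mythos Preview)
Your argument is correct and is precisely the standard domination trick: on the set $B=\{u<\tfrac{l_1}{l_2}v\}$ one has $-u<l_1v-(1+l_2)u$, hence $g^2e^{-2u}\le g^2e^{2(l_1v-(1+l_2)u)}$, while on the complement the difference vanishes. The paper does not supply its own proof of this lemma (it is quoted from \cite{AMAG} and \cite{BGMY23}), so there is nothing to compare against, but your approach is the expected one.

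One cosmetic remark: your closing sentence slightly misidentifies the role of the local upper bounds on $u$ and $v$. Upper bounds do not rule out the value $-\infty$, so they do not by themselves make the exponentials finite; what actually makes the argument go through is that the pointwise inequality $0\le g^{2}e^{-2u}-g^{2}e^{-2\max\{u,\frac{l_1}{l_2}v\}}\le g^{2}e^{2(l_1v-(1+l_2)u)}$ holds as an inequality of $[0,+\infty]$-valued measurable functions, and the right-hand side is integrable by hypothesis (hence finite a.e.). This is not a gap in the proof, only in the commentary.
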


Lemma \ref{lem-Choquet} and Lemma \ref{lem-upper.reg.equal.a.e} give the results about upper regularizations of a family of upper-semicontinuous functions.

\begin{Lemma}[\emph{Choquet's lemma}, see \cite{CADG}]\label{lem-Choquet}
    Every family $(u_{\alpha})$ of uppersemicontinuous functions has a countable subfamily $(v_{j})=(u_{\alpha(j)})$,
    such that its upper envelope $v=\sup_{j}v_{j}$ satisfies $v\leq u\leq u^{*} = v^{*}$,
    where
    $u=\sup_{\alpha}u_{\alpha}$, and
    \[u^{*}(z):=\lim_{\varepsilon\to0}\sup_{\mathbb{B}^{n}(z,\varepsilon)}u, \ v^{*}(z):=\lim_{\varepsilon\to0}\sup_{\mathbb{B}^{n}(z,\varepsilon)}v\]
    are the regularizations of $u$ and $v$.
\end{Lemma}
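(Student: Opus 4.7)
The plan is to exploit the second countability of the ambient space. Fix a countable base $\{B_k\}_{k\in\mathbb{N}}$ of open sets for the topology (for instance, open balls with rational centers and radii). For each $k$, put
\[M_k := \sup_{B_k} u \in [-\infty,+\infty].\]
When $M_k > -\infty$, choose indices $\alpha(k,j)$ and points $z_{k,j}\in B_k$ so that $u_{\alpha(k,j)}(z_{k,j})\to M_k$ as $j\to\infty$; when $M_k=-\infty$ no choice is needed. Reindex the countable collection $\{u_{\alpha(k,j)}\}_{k,j}$ linearly as $(v_j)$ and set $v:=\sup_j v_j$.

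The inequalities $v\le u\le u^{*}$ are immediate: $v$ is the supremum over a subfamily of $(u_\alpha)$, and $u^{*}$ is by definition an upper semicontinuous majorant of $u$. The inclusion $v\le u$ gives $v^{*}\le u^{*}$ at once. For the reverse inequality, fix $z$ and note that since $\{B_k\}$ is a base, every open neighborhood of $z$ contains some $B_k\ni z$, so both regularizations can be computed as infima over basic open sets:
\[u^{*}(z) = \inf_{B_k\ni z}\sup_{B_k} u, \qquad v^{*}(z) = \inf_{B_k\ni z}\sup_{B_k} v.\]
By construction $\sup_{B_k} v \ge \sup_j u_{\alpha(k,j)}(z_{k,j}) = M_k = \sup_{B_k} u$, while the reverse inequality $\sup_{B_k} v\le\sup_{B_k} u$ is trivial. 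Hence $\sup_{B_k} v=\sup_{B_k} u$ for every $k$, and passing to the infimum over $B_k\ni z$ yields $v^{*}(z)=u^{*}(z)$.

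The only genuine subtlety is that approximating $M_k$ on a single basis element $B_k$ generally requires varying both the index $\alpha$ and the point $z$ simultaneously, which is why the selected subfamily must be doubly indexed by $(k,j)$ rather than by $k$ alone; a family indexed only by $k$ (e.g.\ by picking a single $\alpha_k$ with $\sup_{B_k} u_{\alpha_k}$ close to $M_k$) would fail to capture the joint sup. Beyond this, no obstacle arises: upper semicontinuity is trivially preserved under passage to a subfamily, and the reduction from arbitrary neighborhoods of $z$ to basic open sets containing $z$ is the standard argument built into the definition of the upper regularization.
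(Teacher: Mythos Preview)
Your argument is correct and is essentially the standard proof of Choquet's lemma (as found, e.g., in Demailly's \emph{Complex Analytic and Differential Geometry}). Note that the paper itself does not supply a proof of this lemma; it is merely quoted from \cite{CADG} as a known tool, so there is no in-paper proof to compare against. One minor remark on your closing paragraph: the alternative you dismiss---choosing for each pair $(k,j)$ an index $\alpha(k,j)$ with $\sup_{B_k} u_{\alpha(k,j)} > M_k - 1/j$---would in fact also work, since it still yields $\sup_{B_k} v \ge M_k$; what genuinely fails is using only a \emph{single} index per basic set $B_k$. This does not affect the validity of your main argument.
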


\begin{Lemma}[see Proposition 4.24 in \cite{CADG}]\label{lem-upper.reg.equal.a.e}
    If all $(u_{\alpha})$ are subharmonic, the upper regularization $u^*$ is subharmonic and equals almost everywhere to $u$.
\end{Lemma}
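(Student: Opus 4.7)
The plan is to combine Choquet's lemma with a monotone-increasing reduction. By Lemma~\ref{lem-Choquet}, there is a countable subfamily $(v_j) = (u_{\alpha(j)})$ whose pointwise supremum $v = \sup_j v_j$ satisfies $v \le u \le u^* = v^*$. Since a finite maximum of subharmonic functions is again subharmonic, the partial maxima $w_k := \max\{v_1,\ldots,v_k\}$ form an increasing sequence of subharmonic functions with $\sup_k w_k = v$, so that $w := \lim_k w_k$ equals $v$ and in particular $w^* = u^*$ while $w \le u \le u^*$ pointwise. It therefore suffices to prove the two conclusions for the single monotone limit $w$ of subharmonic functions; the statements for $u$ and $u^*$ then follow by identification.

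For $w$, I would first derive the sub-mean-value inequality directly from the monotone convergence theorem: for any ball $B(z, r) \Subset D$,
\[
w(z) = \lim_{k \to \infty} w_k(z) \le \lim_{k \to \infty} \frac{1}{|B(z, r)|}\int_{B(z, r)} w_k \, d\lambda = \frac{1}{|B(z, r)|}\int_{B(z, r)} w \, d\lambda,
\]
where local integrability of $w$ (justifying the use of monotone convergence and the finiteness of the right-hand side) follows from $w_1 \le w$ together with the standing local upper bound on $u$. Because $w$ is locally integrable, the averaging map $z \mapsto \frac{1}{|B(z,r)|}\int_{B(z,r)} w \, d\lambda$ is continuous in $z$, so taking $\limsup_{z \to z_0}$ on both sides upgrades the inequality to the upper regularization,
\[
w^*(z_0) \le \frac{1}{|B(z_0, r)|}\int_{B(z_0, r)} w \, d\lambda.
\]

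Finally, I would invoke the Lebesgue differentiation theorem: at every Lebesgue point of $w$ the right-hand side above tends to $w(z_0)$ as $r \to 0^+$, which forces $w^*(z_0) \le w(z_0)$ off a Lebesgue-null set; combined with the pointwise inequality $w^* \ge w$ this yields $w = w^*$ almost everywhere. Substituting $w = w^*$ a.e.\ back into the displayed inequality converts it into the genuine sub-mean-value property for $w^*$ itself, which together with the automatic upper-semicontinuity of $w^*$ establishes subharmonicity. Transferring via $w^* = u^*$ and $w = v \le u \le u^*$ gives $u = u^*$ almost everywhere and concludes the proof. The delicate step, and the place where the argument must be carried out carefully, is the passage from the inequality for $w$ to the inequality for $w^*$ at a given base point $z_0$ — it depends on the continuity of ball averages as a function of the center, and thus on having established local integrability of $w$ before the regularization step. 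Everything else is essentially bookkeeping among the three envelopes $u$, $v$, $w$ and their upper regularizations.
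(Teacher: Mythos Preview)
Your argument is correct and is essentially the classical proof of this fact (Choquet reduction, monotone partial maxima, sub-mean-value passed to the limit via monotone convergence, Lebesgue differentiation to force $w=w^*$ a.e., and then back-substitution to get subharmonicity of $w^*$). Note, however, that the paper does not supply its own proof of this lemma: it is simply quoted from Demailly's book \cite{CADG}, Proposition~4.24, as a standard tool. So there is no ``paper's proof'' to compare against beyond the reference, and your write-up is in fact the argument one finds there (and in most pluripotential-theory texts). One small point worth making explicit in your write-up: the hypothesis that the family $(u_\alpha)$ is locally uniformly bounded above is implicit in the cited reference and is what guarantees both that $u^*<+\infty$ and the local integrability of $w$ you invoke; you allude to it as ``the standing local upper bound on $u$'', but since the lemma as stated in the paper omits it, it would be cleaner to flag it as an assumption at the outset.
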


The following lemma shows the closedness of the ideals of $\mathcal{O}_o$.
\begin{Lemma}[see \cite{GR84}]\label{lem-module}
	Let $I$ be an ideal of $\mathcal{O}_{\mathbb{C}^n,o}$. Let $\{f_j\}\subset\mathcal{O}_{\mathbb{C}^n}(U)$ be a sequence of holomorphic functions in an open neighborhood $U$ of the origin $o$. Assume that $\{f_j\}$ converges uniformly in $U$ towards $f\in\mathcal{O}_{\mathbb{C}^n,o}$, and assume furthermore that all the germs $(f_j,o)$ belong to $I$. Then $(f,o)\in I$.
\end{Lemma}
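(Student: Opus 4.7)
The plan is to reduce to the finitely generated case and then combine a uniform division estimate with Montel's theorem. By R\"uckert's basis theorem, $\mathcal{O}_{\mathbb{C}^n,o}$ is Noetherian, so $I$ is finitely generated; fix generators $g_1,\ldots,g_r$ and pick holomorphic representatives, still denoted $g_1,\ldots,g_r$, defined on some open polydisk $V$ centered at $o$ with $\overline{V}\subset U$. The goal is then to produce a neighborhood $W$ of $o$ and coefficients $a_i\in\mathcal{O}(W)$ with $f|_W=\sum_i a_ig_i$.

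The analytic heart of the argument is the following uniform division estimate: there exist a polydisk $W\Subset V$ and a constant $C>0$ such that for every $h\in\mathcal{O}(V)$ with $(h,o)\in I$, one can find $a_1,\ldots,a_r\in\mathcal{O}(W)$ with
\[h|_W=\sum_{i=1}^r a_ig_i \quad\text{and}\quad \|a_i\|_W\le C\|h\|_V.\]
I would derive this estimate via the open mapping theorem in Fr\'echet spaces, applied to the continuous linear map
\[\Phi:\mathcal{O}(W)^r\longrightarrow\mathcal{O}(W),\qquad (a_1,\ldots,a_r)\longmapsto\sum_{i=1}^r a_i\,g_i\big|_W.\]
The image of $\Phi$ is a closed subspace of $\mathcal{O}(W)$: by Oka coherence it coincides, after shrinking $W$ to a Stein polydisk, with the sections $H^0(W,\mathcal{J})$ of the coherent ideal sheaf $\mathcal{J}:=(g_1,\ldots,g_r)\mathcal{O}_{\mathbb{C}^n}|_W$, and such section spaces are closed under normal convergence. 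The open mapping theorem then yields continuous right inverses, and a further harmless shrinking of $W$ produces the desired constant $C$.

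Granted the estimate, the proof concludes quickly. Since $f_j\to f$ uniformly on $U$, in particular on $\overline{V}$, the sequence $\{\|f_j\|_V\}$ is bounded. For each $j$ the estimate produces $a_{j,1},\ldots,a_{j,r}\in\mathcal{O}(W)$ with $\|a_{j,i}\|_W\le C\|f_j\|_V$, so the families $\{a_{j,i}\}_j$ are locally uniformly bounded on $W$. Montel's theorem extracts a subsequence with $a_{j_k,i}\to a_i$ normally on $W$ for each $i$, and passing to the limit in $f_{j_k}|_W=\sum_i a_{j_k,i}\,g_i$ yields $f|_W=\sum_i a_ig_i$, whence $(f,o)\in I$. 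The main obstacle is precisely the uniform division estimate: the coefficients $a_{j,i}$ attached to $(f_j,o)\in I$ are only defined modulo the finitely generated syzygy module of $g_1,\ldots,g_r$, so without a quantitative bound one cannot select representatives that converge along a subsequence. Establishing the estimate amounts to closedness of $\mathrm{Image}(\Phi)$, which ultimately rests on non-trivial coherence inputs; an alternative route avoiding sheaf theory is to induct on $n$ via the Weierstrass division theorem with explicit estimates, at the cost of lengthier bookkeeping.
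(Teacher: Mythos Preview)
The paper does not supply a proof of this lemma; it is quoted from Grauert--Remmert \cite{GR84} as a known fact, so there is no in-paper argument to compare against. Your proposal therefore has to be judged on its own merits.

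Your overall strategy (finite generation, uniform division estimate, then Montel and pass to the limit) is a standard and correct scheme. The final paragraph is fine once the estimate is granted. Two points about the estimate deserve comment.

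First, a near-circularity. You obtain the uniform bound by applying the open mapping theorem to $\Phi:\mathcal{O}(W)^r\to\mathcal{O}(W)$ after asserting that $\mathrm{Image}(\Phi)=H^0(W,\mathcal{J})$ is closed. That closedness statement, however, is essentially of the same strength as the lemma you are proving: in Grauert--Remmert both are established by the same machinery (Weierstrass division with bounds, induction on $n$). So while your argument is not literally circular, it defers the entire difficulty to a black box of equal depth. You acknowledge this in your last sentence, which is fair; just be aware that invoking ``section spaces of coherent sheaves are closed'' is not a shortcut here.

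Second, a small gap. Open mapping gives you the bound for $h\in\mathrm{Image}(\Phi)=H^0(W,\mathcal{J})$, but your estimate is stated for all $h\in\mathcal{O}(V)$ with $(h,o)\in I$. You are implicitly assuming that the germ condition $(h,o)\in\mathcal{J}_o$ forces $h|_W\in H^0(W,\mathcal{J})$ on a \emph{fixed} polydisk $W$. This is true once $W$ is chosen small enough (so that every irreducible component of $V(g_1,\ldots,g_r)\cap W$ passes through $o$, and one propagates by an identity-principle argument on the coherent quotient $\mathcal{O}_W/\mathcal{J}$), but it requires a line of justification that your write-up omits.

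If you want a short self-contained proof that avoids both issues, the Krull intersection theorem does the job. Since $\mathcal{O}_o$ is Noetherian local with maximal ideal $\mathfrak{m}$, one has $\bigcap_{k\ge1}(I+\mathfrak{m}^k)=I$. For each $k$ the quotient $\mathcal{O}_o/(I+\mathfrak{m}^k)$ is finite-dimensional, and the composite $\mathcal{O}(U)\to\mathcal{O}_o\to\mathcal{O}_o/(I+\mathfrak{m}^k)$ is continuous for the topology of uniform convergence (it depends only on finitely many Taylor coefficients at $o$). Since each $f_j$ maps to $0$ and $f_j\to f$ uniformly, $f$ maps to $0$ as well, i.e.\ $(f,o)\in I+\mathfrak{m}^k$ for every $k$, hence $(f,o)\in I$.
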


The following lemma is called \emph{Demailly's approximation theorem}, which gives the approximation of any plurisubharmonic function by using the Bergman kernels.
\begin{Lemma}[see \cite{AMAG}]\label{lem-Dem.approx}
	Let $D\subset\mathbb{C}^n$ be a bounded pseudoconvex domain, and $\varphi\in \mathrm{PSH}(D)$. For any positive integer $m$, let $\{\sigma_{m,k}\}_{k=1}^{\infty}$ be an orthonormal basis of
    \[A^2(D,2m\varphi):=\left\{f\in\mathcal{O}(D):\int_{D}|f|^2e^{-2m\varphi}d\lambda<+\infty\right\}.\]
    Denote
	\[\varphi_m:=\frac{1}{2m}\log\sum_{k=1}^{\infty}|\sigma_{m,k}|^2\]
	on $D$. Then there exist positive constants $C_1$ (depending only on $n$ and diameter of $D$) and $C_2$ such that 
	\begin{equation*}
		\varphi(z)-\frac{C_1}{m}\le\varphi_m(z)\le\sup_{|\zeta-z|<r}\varphi(\zeta)+\frac{1}{m}\log\frac{C_2}{r^n}
	\end{equation*}
	for any $z\in D$ satisfying $\{\zeta : |\zeta-z|<r\}\subset\subset D$. Especially, $\varphi_m$ converges to $\varphi$ pointwisely and in $L^1_{\text{loc}}$ on $D$.
\end{Lemma}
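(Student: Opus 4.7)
The plan is to reinterpret $\sum_k|\sigma_{m,k}(z)|^2$ as the (reproducing kernel) extremal function of the weighted Bergman space $A^2(D,2m\varphi)$, namely
\[
\sum_{k=1}^\infty |\sigma_{m,k}(z)|^2=\sup\bigl\{|f(z)|^2 : f\in A^2(D,2m\varphi),\ \textstyle\int_D|f|^2e^{-2m\varphi}\,d\lambda\le 1\bigr\},
\]
and to bound this supremum above and below using (i) the sub-mean value property of $|f|^2$ and (ii) the Ohsawa--Takegoshi $L^2$ extension theorem, respectively.

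For the upper bound, fix $z\in D$ and $r>0$ with $\overline{B(z,r)}\subset D$. For any admissible $f$, apply the sub-mean value inequality to the plurisubharmonic function $|f|^2$ on $B(z,r)$:
\[
|f(z)|^2\le \frac{1}{\mathrm{vol}(B(z,r))}\int_{B(z,r)}|f|^2\,d\lambda\le \frac{C_2}{r^{2n}}\int_{B(z,r)}|f|^2 e^{-2m\varphi}\,e^{2m\varphi}\,d\lambda.
\]
Factoring out $e^{2m\sup_{|\zeta-z|<r}\varphi(\zeta)}$ and using $\|f\|^2_{2m\varphi}\le 1$ gives $|f(z)|^2\le (C_2/r^{2n})\,e^{2m\sup_{B(z,r)}\varphi}$; taking the supremum over $f$ and then $\frac{1}{2m}\log$ yields the desired upper estimate.

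For the lower bound, fix $z_0\in D$ with $\varphi(z_0)>-\infty$ (otherwise the estimate is vacuous). By the Ohsawa--Takegoshi $L^2$ extension theorem applied to the point $\{z_0\}\subset D$ with the plurisubharmonic weight $2m\varphi$, one can extend the value $1$ at $z_0$ to $f\in\mathcal{O}(D)$ with $f(z_0)=1$ and
\[
\int_D |f|^2 e^{-2m\varphi}\,d\lambda\le C_1'\, e^{-2m\varphi(z_0)},
\]
where $C_1'$ depends only on $n$ and the diameter of $D$. Renormalizing $f$ and inserting it into the extremal characterization of the Bergman kernel gives
\[
\sum_k |\sigma_{m,k}(z_0)|^2\ge \frac{|f(z_0)|^2}{\|f\|^2_{2m\varphi}}\ge \frac{1}{C_1'}\,e^{2m\varphi(z_0)},
\]
hence $\varphi_m(z_0)\ge \varphi(z_0)-\frac{\log C_1'}{2m}=\varphi(z_0)-\frac{C_1}{m}$ for a suitable constant $C_1$ (the case $\varphi(z_0)=-\infty$ is trivial).

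Finally, the pointwise convergence $\varphi_m\to\varphi$ follows from combining the two bounds: the lower bound gives $\liminf_{m\to\infty}\varphi_m(z)\ge \varphi(z)$, and letting $m\to\infty$ then $r\to 0$ in the upper bound, together with upper semicontinuity of $\varphi$, gives $\limsup_{m\to\infty}\varphi_m(z)\le \varphi(z)$. The $L^1_{\mathrm{loc}}$ convergence is then deduced from the uniform-in-$m$ local upper bound (which provides an integrable majorant on relatively compact subsets) and dominated convergence. The main obstacle is the lower bound, which relies crucially on the Ohsawa--Takegoshi $L^2$ extension theorem with sharp estimates; the upper bound, the extremal reformulation, and the convergence statements are all comparatively soft.
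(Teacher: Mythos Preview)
The paper does not give its own proof of this lemma; it is simply quoted from \cite{AMAG}, and the Remark following it records the extremal (Bergman kernel) identity you start from. Your argument is exactly the standard Demailly proof found in that reference: extremal reformulation of $\sum_k|\sigma_{m,k}|^2$, upper bound by the sub-mean value inequality for $|f|^2$, lower bound by Ohsawa--Takegoshi extension from a point, and then the convergence statements from the two-sided estimate. So your proposal is correct and matches the cited source; there is nothing in the paper itself to compare against.
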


\begin{Remark}
	Let $(\tau_l)$ be an orthonormal basis of the space $A^2(\Omega, 2m\varphi)$, then
	\[\sum_l|\tau_l(z)|^2=\sup\left\{|f(z)|^2 : f\in A^2(\Omega, 2m\varphi) \ \& \ \int_{\Omega}|f|^2e^{-2m\varphi}\le 1\right\}\]
	for any $z\in \Omega$.
\end{Remark}

Lemma \ref{lem-maximal.ddcn=0} shows the condition $(dd^c\varphi)^n=0$ for a plurisubharmonic funtion $\varphi$ actually is equivalent to a maximal property to $\varphi$.
\begin{Lemma}[see \cite{Bl-note}, see also \cite{BT76,Bl93}]\label{lem-maximal.ddcn=0}
    Let $\varphi\in \mathrm{PSH}(\Omega)\cap L^{\infty}_{\mathrm{loc}}(\Omega)$ on an open subset $\Omega$ of $\mathbb{C}^n$. Then for any $u\in \mathrm{PSH}(\Omega)$ such that $\varphi\ge u$ outside a compact subset of $\Omega$ we have $\varphi\ge u$ on $\Omega$, if and only if $(dd^c\varphi)^n=0$ on $\Omega$.
\end{Lemma}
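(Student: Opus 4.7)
The plan is to prove the equivalence in two directions by means of the Bedford--Taylor machinery for the complex Monge--Amp\`ere operator on locally bounded plurisubharmonic functions. For the implication $(\Leftarrow)$, I would take $u\in\mathrm{PSH}(\Omega)$ and a compact set $K\subset\Omega$ with $\varphi\ge u$ on $\Omega\setminus K$, and first reduce to $u$ bounded by the truncation $u_j:=\max(u,-j)$; for $j$ large we still have $u_j\le\varphi$ outside $K$, and $u_j\downarrow u$. I then fix an open subdomain $\Omega'$ with $K\subset\Omega'\subset\subset\Omega$ on which $\varphi$ is bounded, so that $\liminf_{\zeta\to\partial\Omega'}(\varphi-u_j)(\zeta)\ge 0$.

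The heart of the sufficiency argument is a strict-plurisubharmonic perturbation combined with the Bedford--Taylor comparison principle. I would set $\tilde u_\varepsilon:=u_j+\varepsilon(|z|^2-M)$ with $M$ so large that $|z|^2-M<0$ on $\overline{\Omega'}$; then $\tilde u_\varepsilon$ is plurisubharmonic, $\tilde u_\varepsilon\le u_j$ preserves the boundary inequality, and $(dd^c\tilde u_\varepsilon)^n\ge c_n\varepsilon^n\,d\lambda$. The comparison principle yields
\[
c_n\varepsilon^n\cdot\mathrm{vol}(\{\varphi<\tilde u_\varepsilon\})\le\int_{\{\varphi<\tilde u_\varepsilon\}}(dd^c\tilde u_\varepsilon)^n\le\int_{\{\varphi<\tilde u_\varepsilon\}}(dd^c\varphi)^n=0,
\]
so $\tilde u_\varepsilon\le\varphi$ almost everywhere on $\Omega'$; since both sides are plurisubharmonic, the identity $v(z)=\lim_{r\to 0^+}\frac{1}{|B(z,r)|}\int_{B(z,r)}v\,d\lambda$ upgrades this to a pointwise inequality. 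Sending $\varepsilon\to 0$ and then $j\to\infty$ yields $u\le\varphi$ on $\Omega$.

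For the implication $(\Rightarrow)$, I would work ball by ball. Fix any open ball $B$ with $\overline{B}\subset\Omega$ and consider the Perron--Bremermann envelope
\[
u(z):=\sup\bigl\{v(z) : v\in\mathrm{PSH}(B),\ \limsup_{\zeta\to\partial B}v(\zeta)\le\varphi(\zeta)\bigr\}.
\]
By the Bedford--Taylor solution of the Dirichlet problem, $u\in\mathrm{PSH}(B)\cap L^\infty(B)$, $(dd^c u)^n=0$ on $B$, and $u$ attains the boundary values $\varphi|_{\partial B}$; since $\varphi$ itself is a competitor in the envelope, $u\ge\varphi$ on $B$. I then glue by setting $\tilde u:=u$ on $B$ and $\tilde u:=\varphi$ on $\Omega\setminus B$. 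The conditions $u\ge\varphi$ on $B$ together with matching boundary values imply $\tilde u\in\mathrm{PSH}(\Omega)$: upper semicontinuity at $\partial B$ comes from the boundary behavior of the envelope, and the submean value property at $\partial B$ follows from $\tilde u\ge\varphi$ and the submean property of $\varphi$. Since $\tilde u=\varphi$ outside the compact set $\overline{B}$, maximality of $\varphi$ forces $\tilde u\le\varphi$ on $\Omega$; combined with $\tilde u\ge\varphi$ this gives $u=\varphi$ on $B$, hence $(dd^c\varphi)^n=0$ on $B$. As $B$ is arbitrary, we conclude on $\Omega$.

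The main obstacle is the necessity direction, specifically the invocation of the Dirichlet problem for the Monge--Amp\`ere operator with the possibly irregular boundary data $\varphi|_{\partial B}$. If $\varphi$ is only locally bounded and not continuous on $\partial B$, the upper semicontinuous regularization of the Perron envelope may fail to attain the prescribed boundary values classically, so one should either choose smoothly bounded subdomains and first regularize $\varphi$ by a decreasing sequence of smooth plurisubharmonic functions, then pass to a monotone limit, or directly invoke the full well-posedness theory of Bedford--Taylor; keeping the gluing step valid throughout this limiting procedure, particularly upper semicontinuity of $\tilde u$ across $\partial B$, is the delicate technical point.
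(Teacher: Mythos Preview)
The paper does not supply its own proof of this lemma; it is quoted verbatim from B\l ocki's lecture notes (with further attribution to Bedford--Taylor and B\l ocki's earlier paper), so there is no in-paper argument to compare against. Your proposal is precisely the standard route taken in those references: the sufficiency direction via the comparison principle after a strictly plurisubharmonic perturbation, and the necessity direction via the Perron--Bremermann envelope on balls together with a gluing argument. Both directions are correctly outlined, and you have accurately isolated the one genuinely delicate point---handling possibly discontinuous boundary data in the Dirichlet problem---and indicated the usual remedy (decreasing smooth approximation of $\varphi$, then passage to the limit using monotone continuity of the Monge--Amp\`ere operator). Nothing further is needed here.
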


\section{Basic properties of multipoled global Zhou weights}\label{sec.basic.property}
In this section, we prove the basic properties of multipoled global Zhou weights, including Remark \ref{rem-existence}, Proposition \ref{prop-global.is.local}, \ref{prop-local.produce.global}, \ref{prop-psi.le.sigmaZ.Phi}, \ref{prop-maximal}, and \ref{prop-continuous.exhaustive}.

\begin{proof}[\textbf{Proof of Remark \ref{rem-existence}}.]
    First, we prove the existence of $\Phi_{\mathscr{P},\max}^D$. Define a set
    \[\mathcal{S}(\mathscr{P},\Phi):=\big\{\phi\in\mathrm{PSH}^-(D) : \phi\ge \Phi, \ \text{and} \ (\mathbf{f}_{0,i},\mathbf{z}_i)\notin \mathcal{I}(u_{0,i}+\phi)_{\mathbf{z}_i}, \ \forall i=1,\ldots,p\big\}.\]
    It follows from \emph{Zorn's Lemma} that $\mathcal{S}(\mathscr{P},\Phi)$ contains a maximal subset $(\phi_{\alpha})_{\alpha\in\Gamma}\subset\mathcal{S}(\mathscr{P},\Phi)$ with the index set $\Gamma$ satisfying: for every $\alpha,\alpha'\in\Gamma$, either $\phi_{\alpha}\le\phi_{\alpha'}$ or $\phi_{\alpha}\ge\phi_{\alpha'}$ holds on $D$.

    Let $\varphi(z):=\sup_{\alpha\in\Gamma}\phi_{\alpha}(z)$, and
    \[\varphi^*(z):=\lim_{\varepsilon\to 0^+}\sup_{w\in\mathbb{B}^n(z,\varepsilon)}\varphi(w),\]
    for every $z\in D$. Then $\varphi^*\in\mathrm{PSH}^-(D)$. Choquet's Lemma (Lemma \ref{lem-Choquet}) shows that there exists an increasing subsequence $(\phi_j)_{j\ge 1}$ of $(\phi_{\alpha})_{\alpha\in\Gamma}$ such that
    \[\big(\lim_{j\to\infty}\phi_j\big)^*=\varphi^*.\]
    Thus, according to Lemma \ref{lem-upper.reg.equal.a.e}, we have also that $(\phi_j)$ converges to $u^*$ almost everywhere in $D$, where $u^*$ is plurisubharmonic on $D$. Then Lemma \ref{lem-effect_GZ} and Remark \ref{rem-effect_GZ} imply that
    \[(\mathbf{f}_{0,i},\mathbf{z}_i)\not\in \mathcal{I}(u_{0,i}+\varphi^*)_{\mathbf{z}_i}, \ \forall i=1,\ldots,p.\]
    Now by the definition of $\varphi^*$, we have that $\Phi_{\mathscr{P},\max}^D$ is a global Zhou weight related to $\mathscr{P}$ on $D$.

    Next, we prove the inequality (\ref{eq-Phi.ge.NG_D}). Since
    \[|\mathbf{f}_{0,i}|^2e^{-2u_{0,i}}|z-\mathbf{z}_i|^{2N_i}e^{-2\Phi^D_{\mathscr{P},\max}}\]
    is integrable near $\mathbf{z}_i$, the strong openness property (Theorem \ref{thm-SOC}) shows that there exists sufficiently small $\varepsilon_i>0$ such that
    \[|\mathbf{f}_{0,i}|^2e^{-2u_{0,i}}|z-\mathbf{z}_i|^{2N_i}e^{-2(1+\varepsilon_i)\Phi^D_{\mathscr{P},\max}}\]
    is integrable near $\mathbf{z}_i$ for every $i\in\{1,\ldots,p\}$. Then for the multipoled pluricomplex Green function $G_{D,\mathbf{Z}}$ on the hyperconvex domain $D$, we get that
    \[|\mathbf{f}_{0,i}|^2e^{-2u_{0,i}}e^{2N_iG_{D,\mathbf{Z}}}e^{-2(1+\varepsilon_i)\Phi^D_{\mathscr{P},\max}}\]
    is integrable near $\mathbf{z}_i$ for every $i$. Now by Lemma \ref{lem-integrable.to.integrable}, it holds that
    \[|\mathbf{f}_{0.i}|^2e^{-2u_{0,i}}e^{-2\Phi_{\mathscr{P},\max}^D}-|\mathbf{f}_{0.i}|^2e^{-2u_{0,i}}e^{-2\max \left\{\Phi_{\mathscr{P},\max}^D,\frac{N_i}{\varepsilon_i}G_{D,\mathbf{Z}}\right\}}\]
    is integrable near $\mathbf{z}_i$ for every $i$. Since
    \[|\mathbf{f}_{0.i}|^2e^{-2u_{0,i}}e^{-2\Phi_{\mathscr{P},\max}^D}\]
    is not integrable near each $\mathbf{z}_i$, it follows that 
    \[|\mathbf{f}_{0.i}|^2e^{-2u_{0,i}}e^{-2\max \left\{\Phi_{\mathscr{P},\max}^D,\frac{N_i}{\varepsilon_i}G_{D,\mathbf{Z}}\right\}}\]
    is not integrable near each $\mathbf{z}_i$. Denote
    \[N:=\max_{1\le i\le p}\{N_i/\varepsilon_i\}.\]
    Note that $NG_{D,\mathbf{Z}}\in \mathrm{PSH}^-(D)$, then by the definition of $\Phi_{\mathscr{P},\max}^D$, it holds that
    \[\Phi_{\mathscr{P},\max}^D(z)\ge NG_{D,\mathbf{Z}}(z), \ \forall z\in D.\]
\end{proof}

For any $z\in D$ and any $i\in\{1,\ldots,p\}$, denote
\begin{flalign}\label{eq-Phi.zi.z}
\begin{split}
    \Phi_{D,\mathbf{z}_i}(z)=\sup\big\{\phi(z) : \phi\in\mathrm{PSH}^-(D), \ (\mathbf{f}_{0,i},\mathbf{z}_i)\notin\mathcal{I}(u_{0,i}+\phi)_{\mathbf{z}_i},&\\
    \text{and} \ \phi\ge \Phi_{\mathscr{P},\max}^D \ \text{on} \  D&\big\}.
\end{split}
\end{flalign}
We need the following lemma.

\begin{Lemma}\label{lem-sigma.le.PhiP.le.min}
     The function $\Phi_{D,\mathbf{z}_i}(z)$ defined by (\ref{eq-Phi.zi.z}) is a global Zhou weight related to $|\mathbf{f}_{0,i}|^2e^{-2u_{0,i}}$ at $\mathbf{z}_i$ on $D$ for any $i\in\{1,\ldots,p\}$. Moreover, we have that
     \begin{equation}\label{eq-sumPhi.le.Phimax.le.min}
        \sum_{1\le i\le p}\Phi_{D,\mathbf{z}_i}(z)\le \Phi_{\mathscr{P},\max}^D(z)\le \min_{1\le i\le p}\Phi_{D,\mathbf{z}_i}(z)
     \end{equation}
     holds for every $z\in D$, and in particular,
     \[\Phi_{\mathscr{P},\max}^D=\Phi_{D,\mathbf{z}_i}+O(1)\]
     near $\mathbf{z}_i$ for any $i$.
\end{Lemma}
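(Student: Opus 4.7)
The plan is to proceed in three steps: (i) verify that each $\Phi_{D,\mathbf{z}_i}$ is a single-pole global Zhou weight at $\mathbf{z}_i$; (ii) prove the double inequality (\ref{eq-sumPhi.le.Phimax.le.min}); (iii) deduce the ``in particular'' assertion from (ii).

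For step (i), first note that $\Phi_{\mathscr{P},\max}^D$ itself lies in the candidate set
\[\mathcal{S}_i := \big\{\phi \in \mathrm{PSH}^-(D) : (\mathbf{f}_{0,i},\mathbf{z}_i) \notin \mathcal{I}(u_{0,i}+\phi)_{\mathbf{z}_i},\ \phi \geq \Phi_{\mathscr{P},\max}^D \text{ on } D\big\},\]
so $\mathcal{S}_i$ is nonempty. I would run exactly the Zorn's Lemma + Choquet's Lemma argument as in the proof of Remark \ref{rem-existence}: extract a maximal chain from $\mathcal{S}_i$, use Choquet's Lemma (Lemma \ref{lem-Choquet}) to get an increasing countable subsequence with the same upper regularization (this is possible because the chain is totally ordered), and apply Lemma \ref{lem-effect_GZ} together with Remark \ref{rem-effect_GZ} to transfer the non-integrability at $\mathbf{z}_i$ to the limit. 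This shows the upper regularization of the supremum is itself in $\mathcal{S}_i$, and it coincides with $\Phi_{D,\mathbf{z}_i}$ in (\ref{eq-Phi.zi.z}). Condition (1) of Definition \ref{def-global.Zhou.weight.in.intro} is inherited from $\Phi_{\mathscr{P},\max}^D$ via $\Phi_{D,\mathbf{z}_i} \geq \Phi_{\mathscr{P},\max}^D$, while the maximality condition (3) is immediate from the supremum definition: any $\tilde\varphi \in \mathrm{PSH}^-(D)$ with $\tilde\varphi \geq \Phi_{D,\mathbf{z}_i}$ (hence $\geq \Phi_{\mathscr{P},\max}^D$) satisfying the non-integrability at $\mathbf{z}_i$ is again a candidate, forcing $\tilde\varphi \leq \Phi_{D,\mathbf{z}_i}$ and thus equality.

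For step (ii), the right inequality is trivial from the definition. For the left, I would set
\[\Psi := \max\Big(\Phi_{\mathscr{P},\max}^D,\ \sum_{i=1}^p \Phi_{D,\mathbf{z}_i}\Big) \in \mathrm{PSH}^-(D),\]
which satisfies $\Psi \geq \Phi_{\mathscr{P},\max}^D$. By the single-pole version of Proposition \ref{prop-maximal.single.pole} applied to each $\Phi_{D,\mathbf{z}_i}$ (already a global Zhou weight at $\mathbf{z}_i$ by step (i)), we have $\Phi_{D,\mathbf{z}_i} \in L^{\infty}_{\mathrm{loc}}(D \setminus \{\mathbf{z}_i\})$. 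Hence for $i \neq j$, $\Phi_{D,\mathbf{z}_i}$ is bounded and nonpositive in a neighborhood of $\mathbf{z}_j$, which gives $\sum_i \Phi_{D,\mathbf{z}_i} \leq \Phi_{D,\mathbf{z}_j}$ near $\mathbf{z}_j$. Combined with $\Phi_{\mathscr{P},\max}^D \leq \Phi_{D,\mathbf{z}_j}$, this yields $\Psi \leq \Phi_{D,\mathbf{z}_j}$ near $\mathbf{z}_j$, so $|\mathbf{f}_{0,j}|^2 e^{-2u_{0,j}-2\Psi} \geq |\mathbf{f}_{0,j}|^2 e^{-2u_{0,j}-2\Phi_{D,\mathbf{z}_j}}$ is not integrable near $\mathbf{z}_j$. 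The maximality condition (3) of $\Phi_{\mathscr{P},\max}^D$ in Definition \ref{def-Zhou.maximal.weight} then gives $\Psi = \Phi_{\mathscr{P},\max}^D$ on $D$, i.e.\ $\sum_i \Phi_{D,\mathbf{z}_i} \leq \Phi_{\mathscr{P},\max}^D$.

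For step (iii), isolating the $j$-th term in the left inequality yields $\Phi_{D,\mathbf{z}_j} \leq \Phi_{\mathscr{P},\max}^D - \sum_{i \neq j} \Phi_{D,\mathbf{z}_i}$ on $D$; using once more that each $\Phi_{D,\mathbf{z}_i}$ with $i \neq j$ is bounded in a neighborhood of $\mathbf{z}_j$, the correction term is $O(1)$, and combined with $\Phi_{D,\mathbf{z}_j} \geq \Phi_{\mathscr{P},\max}^D$ we obtain $\Phi_{\mathscr{P},\max}^D = \Phi_{D,\mathbf{z}_j} + O(1)$ near $\mathbf{z}_j$. The main obstacle is step (i): the verification that the upper regularization of the supremum retains the non-integrability at $\mathbf{z}_i$. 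The delicate point is that finite maxima of candidates need not preserve non-integrability, so one must exploit the maximal-chain structure produced by Zorn's Lemma to arrange an increasing Choquet subsequence, which is the setting in which Remark \ref{rem-effect_GZ} and Lemma \ref{lem-effect_GZ} can be invoked.
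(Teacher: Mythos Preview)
Your steps (ii) and (iii) are correct and essentially match the paper's argument. The gap is in step (i), and it is exactly the point you flag as ``the main obstacle'' without actually resolving.

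The Zorn--Choquet argument from Remark~\ref{rem-existence} produces a \emph{maximal} element of $\mathcal{S}_i$, say $\Psi_{D,\mathbf{z}_i}$, which is then a single-pole global Zhou weight at $\mathbf{z}_i$. But Zorn's lemma gives maximality, not maximum: nothing in that argument forces an arbitrary $\phi\in\mathcal{S}_i$ to satisfy $\phi\le\Psi_{D,\mathbf{z}_i}$, so you cannot conclude that $\Psi_{D,\mathbf{z}_i}$ coincides with the supremum $\Phi_{D,\mathbf{z}_i}$ defined in (\ref{eq-Phi.zi.z}). You correctly note that $\mathcal{S}_i$ is not closed under finite maxima, which is precisely why you cannot upgrade the Choquet subfamily to an increasing one while staying inside $\mathcal{S}_i$; but restricting to a single maximal chain only gives you the upper envelope of \emph{that chain}, not of all of $\mathcal{S}_i$.

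The paper closes this gap with an extra step that draws on the single-pole theory (Propositions~\ref{prop-global.is.local.single.pole}, \ref{prop-psi.le.sigma.phi.single.pole}, \ref{prop-maximal.single.pole}). First one proves the double inequality (\ref{eq-sumPhi.le.Phimax.le.min}) for the auxiliary $\Psi_{D,\mathbf{z}_i}$ (your step (ii) works verbatim here). This gives $\Phi_{\mathscr{P},\max}^D\ge\Psi_{D,\mathbf{z}_i}+O(1)$ near $\mathbf{z}_i$, since the other summands $\Psi_{D,\mathbf{z}_j}$, $j\ne i$, are bounded there. Now take any $\phi\in\mathcal{S}_i$: from $\phi\ge\Phi_{\mathscr{P},\max}^D$ one gets $\phi\ge\Psi_{D,\mathbf{z}_i}+O(1)$ near $\mathbf{z}_i$; since $\Psi_{D,\mathbf{z}_i}$ is also a \emph{local} Zhou weight (Proposition~\ref{prop-global.is.local.single.pole}) and $(\mathbf{f}_{0,i},\mathbf{z}_i)\notin\mathcal{I}(u_{0,i}+\phi)_{\mathbf{z}_i}$, condition (3) of Definition~\ref{def-max.relat} forces $\phi=\Psi_{D,\mathbf{z}_i}+O(1)$ near $\mathbf{z}_i$, hence $\sigma_{\mathbf{z}_i}(\phi,\Psi_{D,\mathbf{z}_i})=1$, and Proposition~\ref{prop-psi.le.sigma.phi.single.pole} gives $\phi\le\Psi_{D,\mathbf{z}_i}$ on all of $D$. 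This shows $\Phi_{D,\mathbf{z}_i}=\Psi_{D,\mathbf{z}_i}$, completing step (i). Note the order of the argument is inverted relative to yours: the paper proves the sandwich inequality for the Zorn-produced $\Psi_{D,\mathbf{z}_i}$ \emph{first}, and only then identifies it with the supremum $\Phi_{D,\mathbf{z}_i}$.
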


\begin{proof}
    Replacing $\mathbf{Z}$ by $\mathbf{z}_i$ for some fixed $i$, Remark \ref{rem-existence} shows that there exists a global Zhou weight $\Psi_{D,\mathbf{z}_i}\in \mathcal{L}_i\big(\Phi_{\mathscr{P},\max}^D\big)$ related to $|\mathbf{f}_{0,i}|^2e^{-2u_{0,i}}$ at $\mathbf{z}_i$ on $D$ for every $i$, where
    \begin{flalign*}
        \begin{split}
            \mathcal{L}_i\big(\Phi_{\mathscr{P},\max}^D\big):=\big\{\phi\in\mathrm{PSH}^-(D): \ &(\mathbf{f}_{0,i},\mathbf{z}_i)\notin\mathcal{I}(u_{0,i}+\phi)_{\mathbf{z}_i},\\
            &\text{and} \ \phi\ge \Phi_{\mathscr{P},\max}^D \ \text{on} \ D \big\}.
        \end{split}
        \end{flalign*}
        By the definition of $\Phi_{D,\mathbf{z}_i}$, we have $\Phi_{D,\mathbf{z}_i}\ge\Psi_{D,\mathbf{z}_i}$. 
        
        We next prove
        \begin{equation}\label{eq-sumPsi.le.Phimax.le.min}
            \sum_{1\le i\le p}\Psi_{D,\mathbf{z}_i}\le \Phi_{\mathscr{P},\max}^D\le \min_{1\le i\le p}\Psi_{D,\mathbf{z}_i},
         \end{equation}
        and in particular,
        \[\Psi_{D,\mathbf{z}_i}=\Phi_{\mathscr{P},\max}^D+O(1)\ \text{near} \ \mathbf{z}_i.\]
        In fact, note that the function
        \[\varPsi:=\max\bigg\{\sum_{1\le i\le p}\Psi_{D,\mathbf{z}_i}, \Phi_{\mathscr{P},\max}^D\bigg\}\in\mathrm{PSH}^-(D)\]
        satisfies
        \[\Phi_{\mathscr{P},\max}^D\le\varPsi\le \Psi_{D,\mathbf{z}_i} \ \text{on} \ D,\]
         which implies that $(\mathbf{f}_{0,i},\mathbf{z}_i)\notin\mathcal{I}(u_{0,i}+\varPsi)_{\mathbf{z}_i}$ is not integrable near $\mathbf{z}_i$ for any $i\in\{1,\ldots,p\}$. Then it follows from the definition of $\Phi_{\mathscr{P},\max}^D$ that $\varPsi=\Phi_{\mathscr{P},\max}^D$ on $D$. Thus, we deduce that
         \begin{equation*}
            \sum_{1\le i\le p}\Psi_{D,\mathbf{z}_i}\le \Phi_{\mathscr{P},\max}^D.
         \end{equation*}
         The equality $\Phi_{\mathscr{P},\max}^D\le \min\limits_{1\le i\le p}\Psi_{D,\mathbf{z}_i}$ holds by the definitions of $\Psi_{D,\mathbf{z}_i}$.
         
         Now for every $\phi\in \mathcal{L}_i\big(\Phi_{\mathscr{P},\max}^D\big)$, we have that $\phi\ge \Psi_{D,\mathbf{z}_i}+O(1)$ near $\mathbf{z}_i$, since $\Psi_{D,\mathbf{z}_i}\in L^{\infty}_{\mathrm{loc}}(D\setminus\{\mathbf{z}_i\})$ (Proposition \ref{prop-maximal.single.pole}). As $\Psi_{D,\mathbf{z}_i}$ is a global Zhou weight (also a local Zhou weight by Proposition \ref{prop-global.is.local.single.pole}) on $D$ related to $|\mathbf{f}_{0,i}|^2e^{-2u_{0,i}}$, we get $\phi=\Psi_{D,\mathbf{z}_i}+O(1)$ near $\mathbf{z}_i$, yielding that $\sigma_{\mathbf{z}_i}(\phi,\Psi_{D,\mathbf{z}_i})=1$, and $\phi\le\Psi_{D,\mathbf{z}_i}$ on $D$ (Proposition \ref{prop-psi.le.sigma.phi.single.pole}). Consequently, we have $\Phi_{D,\mathbf{z}_i}=\Psi_{D,\mathbf{z}_i}$ on $D$ for every $i\in\{1,\ldots,p\}$. In addition, the inequality (\ref{eq-sumPhi.le.Phimax.le.min}) follows from (\ref{eq-sumPsi.le.Phimax.le.min}).
\end{proof}

Now we continue our proofs.

\begin{proof}[\textbf{Proof of Proposition \ref{prop-global.is.local}}]
    For the global Zhou weight $\Phi_{\mathscr{P},\max}^D$, let $\Phi_{D,\mathbf{z}_i}$ be defined by (\ref{eq-Phi.zi.z}) for any $i=1,\ldots,p$. Then $\Phi_{D,\mathbf{z}_i}$ is a global Zhou weight related to $|\mathbf{f}_{0,i}|^2e^{-2u_{0,i}}$ at $\mathbf{z}_i$ on $D$, and
    \begin{equation*}
        \Phi_{\mathscr{P},\max}^D=\Phi_{D,\mathbf{z}_i}+O(1) \ \text{near} \ \mathbf{z}_i
    \end{equation*}
    for any $i$. According to Proposition \ref{prop-global.is.local.single.pole}, $\Phi_{D,\mathbf{z}_i}$ is also a local Zhou weight related to $|\mathbf{f}_{0,i}|^2e^{-2u_{0,i}}$ at $\mathbf{z}_i$, thus so is $\Phi_{\mathscr{P},\max}^D$ for any $i$.
\end{proof}

\begin{proof}[\textbf{Proof of Proposition \ref{prop-local.produce.global}}]
    For any fixed $i\in\{1,\ldots,p\}$, since $\phi_{\mathbf{z}_i,\max}$ is a local Zhou weight related to $|\mathbf{f}_{0,i}|^2e^{-2u_{0,i}}$ at $\mathbf{z}_i$, the function defined by
    \begin{flalign*}
        \begin{split}
            \Psi_{\mathbf{z}_i}^D(z):=\sup\big\{\phi(z) :\ \phi\in\mathrm{PSH}^-(D), \ (\mathbf{f}_{0,i},\mathbf{z}_i)\notin\mathcal{I}(u_{0,i}+\phi)_{\mathbf{z}_i}&,\\
            \text{and} \ \phi\ge \phi_{\mathbf{z}_i,\max}+O(1) \ \text{near} \ \mathbf{z}_i &\big\}
        \end{split}
        \end{flalign*}
    is a global Zhou weight related to $|\mathbf{f}_{0,i}|^2e^{-2u_{0,i}}$ at $\mathbf{z}_i$ on $D$ by Proposition \ref{prop-local.produce.global.single.pole}. Moreover, we have $\Psi_{\mathbf{z}_i}^D=\phi_{\mathbf{z}_i,\max}+O(1)$ near $\mathbf{z}_i$, and $\Psi_{\mathbf{z}_i}^D\in L^{\infty}_{\mathrm{loc}}(D\setminus\{\mathbf{z}_i\})$ for any $i$. Let
    \[\Phi:=\sum_{1\le i\le p}\Psi_{\mathbf{z}_i}^D \in\mathrm{PSH}^-(D).\]
    Then $\Phi=\phi_{\mathbf{z}_i,\max}+O(1)$ near $\mathbf{z}_i$ for any $i$. Thus, according to that $\phi_{\mathbf{z}_i,\max}$ are local maximal Zhou weights and Remark \ref{rem-existence}, there exists a global Zhou weight, denoted by $\Psi_{\mathscr{P},\max}^D\in\mathrm{PSH}^-(D)$ on $D$ related to
    \[\mathscr{P}=\bigg(\{\mathbf{z}_i\}_{i=1}^p, \{\mathbf{f}_{0,i}\}_{i=1}^p, \{u_{0,i}\}_{i=1}^p\bigg),\]
    such that $\Psi_{\mathscr{P},\max}^D\ge \Phi$. In particular, $\Psi_{\mathscr{P},\max}^D\ge \phi_{\mathbf{z}_i,\max}+O(1)$ near $\mathbf{z}_i$ for any $i$.

    Denote
    \begin{flalign*}
        \begin{split}
            \mathcal{L}:=\big\{\phi\in\mathrm{PSH}^-(D) : & \ (\mathbf{f}_{0,i},\mathbf{z}_i)\notin\mathcal{I}(u_{0,i}+\phi)_{\mathbf{z}_i}, \\
            &\text{and} \ \phi\ge \phi_{\mathbf{z}_i,\max}+O(1) \ \text{near} \ \mathbf{z}_i, \ \forall i=1,\ldots,p \big\}.
        \end{split}
        \end{flalign*}
    Then $\Psi_{\mathscr{P},\max}^D\in\mathcal{L}$, thus $\Psi_{\mathscr{P},\max}^D\le\Phi_{\mathscr{P},\max}^D$ on $D$. On the other hand, for any $\phi\in\mathcal{L}$, let
    \[\tilde{\phi}:=\max\big\{\Psi^D_{\mathscr{P},\max}, \phi\big\}\in\mathrm{PSH}^-(D).\]
    Since $\Psi^D_{\mathscr{P},\max}\le \Psi_{\mathbf{z}_i}^D$ and $\phi\le \Psi_{\mathbf{z}_i}^D$ on $D$ by the definition of $\Psi_{\mathbf{z}_i}^D$, we have $\tilde{\phi}\le \Psi_{\mathbf{z}_i}^D$ on $D$ for any $i$, yielding that $(\mathbf{f}_{0,i},\mathbf{z}_i)\notin\mathcal{I}(u_{0,i}+\tilde{\phi})_{\mathbf{z}_i}$ for any $i$. Now according to $\tilde{\phi}\ge\Psi^D_{\mathscr{P},\max}$ and that $\Psi_{\mathscr{P},\max}^D$ is a global Zhou weight on $D$, we get $\tilde{\phi}=\phi$, thus $\phi\le\Psi_{\mathscr{P},\max}^D$ and then $\Psi_{\mathscr{P},\max}^D\ge\Phi_{\mathscr{P},\max}^D$ on $D$. Consequently, $\Phi_{\mathscr{P},\max}^D=\Psi_{\mathscr{P},\max}^D$ is a global Zhou weight on $D$ related to $\mathscr{P}$.

    For every $i\in\{1,\ldots,p\}$, note that $\Phi_{\mathscr{P},\max}^D\ge \phi_{\mathbf{z}_i,\max}+O(1)$ near $\mathbf{z}_i$, and $(\mathbf{f}_{0,i},\mathbf{z}_i)\notin\mathcal{I}(u_{0,i}+\Phi_{\mathscr{P},\max}^D)_{\mathbf{z}_i}$, then we have $\Phi_{\mathscr{P},\max}^D= \phi_{\mathbf{z}_i,\max}+O(1)$ near $\mathbf{z}_i$ since $\phi_{\mathbf{z}_i,\max}$ is a local Zhou weight near $\mathbf{z}_i$ related to $|\mathbf{f}_{0,i}|^2e^{-2u_{0,i}}$.
\end{proof}

\begin{proof}[\textbf{Proof of Proposition \ref{prop-psi.le.sigmaZ.Phi}}]
If $\sigma_{\mathbf{Z}}\big(\psi,\Phi_{\mathscr{P},\max}^D\big)=0$, the proof is done. Otherwise, we denote
\[\Psi:=\max\left\{\frac{1}{\sigma_{\mathbf{Z}}\big(\psi,\Phi_{\mathscr{P},\max}^D\big)}\psi, \Phi_{\mathscr{P},\max}^D\right\}\in\mathrm{PSH}^-(D).\]
Clearly $\Psi\ge \Phi_{\mathscr{P},\max}^D$ on $D$. Besides, since $\Phi_{\mathscr{P},\max}^D$ is a local Zhou weight (Proposition \ref{prop-global.is.local}), we have that $\Psi\le\Phi_{\mathscr{P},\max}^D+O(1)$ near every $\mathbf{z}_i$. Thus, $(\mathbf{f}_{0,i},\mathbf{z}_i)\notin\mathcal{I}(u_{0,i}+\Psi)_{\mathbf{z}_i}$ for any $i$. Then by the definition of global Zhou weight, $\Psi=\Phi_{\mathscr{P},\max}^D$ holds on $D$, which completes the proof.
\end{proof}

\begin{proof}[\textbf{Proof of Proposition \ref{prop-maximal}}]
    Let $\Phi_{D,\mathbf{z}_i}$ be defined as which in Lemma \ref{lem-sigma.le.PhiP.le.min} for $i=1,\ldots,p$. We have $\Phi_{D,\mathbf{z}_i}\in L_{\mathrm{loc}}^{\infty}\big(D\setminus\{\mathbf{z}_i\}\big)$ for each $i$ by Proposition \ref{prop-maximal.single.pole}. Then it follows from equation (\ref{eq-sumPhi.le.Phimax.le.min}) in Lemma \ref{lem-sigma.le.PhiP.le.min} that $\Phi_{\mathscr{P},\max}^D\in L^{\infty}_{\mathrm{loc}}\big(D\setminus\{\mathbf{z}_1,\ldots,\mathbf{z}_n\}\big)$.
    
    Using Lemma \ref{lem-maximal.ddcn=0}, we get $\big(dd^c\Phi^{D}_{\mathscr{P},\max}\big)^n=0$ on $D\setminus\{\mathbf{z}_1,\ldots,\mathbf{z}_n\}$.
\end{proof}

\begin{proof}[\textbf{Proof of Proposition \ref{prop-continuous.exhaustive}}]
    First, the exhaustion of $\Phi_{\mathscr{P},\max}^D$ is a direct consequece of Lemma \ref{lem-sigma.le.PhiP.le.min} and Proposition \ref{prop-exhaustion.conti.single.pole}. As a consequece, we can define the restriction of $\Phi_{\mathscr{P},\max}^D$ on $\partial D$ to be $0$.

Next, we prove the continuity of $e^{\Phi_{\mathscr{P},\max}}$ on $D$. We apply Demailly's approximation theorem (Lemma \ref{lem-Dem.approx}) to $\Phi_{\mathscr{P},\max}^D$. Let
\[A^2(D,2m\Phi_{\mathscr{P},\max}^D):=\left\{f\in\mathcal{O}(D) : \int_D|f|^2e^{-2m\Phi_{\mathscr{P},\max}^D}<+\infty\right\}.\]
Then the function defined by
\[\Phi_m(z):=\sup_{f\in A^2(D,2m\Phi_{\mathscr{P},\max}^D)}\frac{1}{2m}\log|f(z)|, \ z\in D,\]
satisfies
\begin{equation}\label{eq-Phim.ge.PhiP-C/m}
    \Phi_m\ge \Phi_{\mathscr{P},\max}^D-\frac{C_1}{m}
\end{equation}
on $D$ for any $m\in\mathbb{N}_+$, where $C_1$ is a constant independent of $m$.

Since $D$ is a bounded hyperconvex domain, there exists $\varrho\in C(D)\cap\mathrm{PSH}(D)$ which is exhaustive on $D$. Let $\epsilon>0$ be sufficiently small, and denote
\[D_{\epsilon}:=\big\{z\in D : \varrho(z)<-\epsilon\big\}.\]
Let $\Phi_{\mathscr{P},\max}^{D_{\epsilon}}$ be the (unique) global Zhou weight on $D_{\epsilon}$ related to $\mathscr{P}$ with
$\Phi_{\mathscr{P},\max}^{D_{\epsilon}}=\Phi_{\mathscr{P},\max}^{D}+O(1)$ near $\mathbf{z}_i$, $\forall i=1,\ldots,p$. Let
\[\delta_{\varepsilon}:=\inf_{z\in D\setminus\overline{D_{2\epsilon}}}\Phi_{\mathscr{P},\max}^D(z)\in (-\infty,0).\]
Then $\Phi_{\mathscr{P},\max}^{D}-\delta_{\varepsilon}\ge \Phi_{\mathscr{P},\max}^{D_{\epsilon}}$ on $\overline{D_{\epsilon}}\setminus\overline{D_{2\epsilon}}$. Denote
\begin{equation*}
    \Phi_{\epsilon}(z):=\left\{
    \begin{array}{ll}
        \max\left\{\Phi_{\mathscr{P},\max}^{D_{\epsilon}}(z)+\delta_{\epsilon}, \Phi_{\mathscr{P},\max}^D(z)\right\} & z\in D_{\epsilon}, \\
        \Phi_{\mathscr{P},\max}^{D}(z) & z\in D\setminus D_{\epsilon}.
    \end{array}
    \right.
\end{equation*}
Then $\Phi_{\epsilon}\in \mathrm{PSH}^-(D)$ and $\Phi_{\epsilon}=\Phi_{\mathscr{P},\max}^{D}+O(1)$ near $\mathbf{z}_i$, $\forall i=1,\ldots,p$. The definition of $\Phi_{\mathscr{P},\max}^{D}$ implies
\[\Phi_{\mathscr{P},\max}^{D_{\epsilon}}+\delta_{\epsilon}\le\Phi_{\mathscr{P},\max}^{D} \ \text{on} \ D_{\epsilon}.\]
Addtionally, we have $\delta_{\epsilon}\to 0$ as $\epsilon\to 0^+$ according to the exhaustion of $\Phi_{\mathscr{P},\max}^{D}(z)$.

For any $z\in D_{\epsilon}$ and any $m\in\mathbb{N}_+$, there exists $F_{z,m}\in \mathcal{O}(D)$ such that
\[\int_{D}|F_{z,m}|^2e^{-2m\Phi_{\mathscr{P},\max}}=1, \ \text{and} \ \frac{1}{m}\log|F_{z,m}(z)|=\Phi_m(z).\]
In particular,    
\[(F_{z,m},\mathbf{z}_i)\in\mathcal{I}\big(m\Phi_{\mathscr{P},\max}^D\big)_{\mathbf{z}_i}, \ \forall i=1,\ldots,p,\]
yielding that $c_{\mathbf{z}_i}^{F_{z,m}}(\Phi_{\mathscr{P},\max}^D)\ge m$. According to Theorem \ref{thm-jump.no.Zhou.no.control}, we have
\[\sigma_{\mathbf{z}_i}\big(\log|F_{z,m}|,\Phi_{\mathscr{P},\max}^D\big)\ge m-C_2, \ \forall i=1,\ldots,p,\]
where $C_2$ is a constant independent of $m$, $z$ and $F_{z,m}$. We also have
\[|F_{z,m}(\zeta)|^2\le \frac{C^2_3}{R_{\epsilon}^{2n}}\int_D|F_{z,m}|^2\mathrm{d}\lambda\le\frac{C^2_3}{R_{\epsilon}^{2n}},\ \forall \zeta\in D_{\epsilon},\]
where $C_3$ is a positive constant only dependent on $n$, and $R_{\epsilon}:=\mathrm{dist}\big(\overline{D_{\epsilon}}, \partial D\big)$. $R_{\epsilon}>0$ is a consequece of the exhaustion of $\varrho$. Thus, it follows Proposition \ref{prop-psi.le.sigmaZ.Phi} that
\[\log|F_{z,m}|-\log\frac{C_3}{R_{\epsilon}^n}\le (m-C_2)\Phi_{\mathscr{P},\max}^{D_{\epsilon}}\]
on $D_{\epsilon}$. Especially, for $m>C_2$,
\begin{flalign*}
    \begin{split}
        \Phi_m(z)&\le \frac{1}{m}\log|F_{z,m}(z)|\\
        &\le \frac{m-C_2}{m}\Phi_{\mathscr{P},\max}^{D_{\epsilon}}(z)+\frac{1}{m}\log\frac{C_3}{R_{\epsilon}^n}\\
        &\le\frac{m-C_2}{m}\big(\Phi_{\mathscr{P},\max}^{D}(z)-\delta_{\epsilon}\big)+\frac{1}{m}\log\frac{C_3}{R_{\epsilon}^n}
    \end{split}
\end{flalign*}
for any $z\in D_{\epsilon}$. Combining with equation (\ref{eq-Phim.ge.PhiP-C/m}), we get
\begin{equation}\label{eq-Phim.PhiP}
    \frac{m}{m-C_2}\left(\Phi_m-\frac{1}{m}\log\frac{C_3}{R_{\epsilon}^n}\right)+\delta_{\epsilon}\le\Phi_{\mathscr{P},\max}^D-\frac{C_1}{m}\le\Phi_m
\end{equation}
 on $D_{\epsilon}$. Note that $e^{\Phi_m}$ is continuous on $D$, and $\lim_{\epsilon\to 0^+}\delta_{\epsilon}=0$. We can deduce that $e^{\Phi_{\mathscr{P},\max}^D}$ is continuous on $D$.

\end{proof}

\section{Approximations of global Zhou weights with multipoles: proof of Theorem \ref{thm-approximation}}\label{sec-approximation}

The proof of Theorem \ref{thm-approximation} is given in this section, which is a reformation of the proof of Theorem 1.20 in \cite{BGMY23} (see also \cite{Ni95, Ni00,Ni21}).

In this section, we always assume that $D$ is a strictly hyperconvex domain with the continuous plurisubharmonic function $\varrho : \Omega \to (-\infty, 1)$ defined in Definition \ref{def-strhpconvex}. For any $j\in\mathbb{Z}\setminus\{0\}$, denote
\[D_j:=\{z\in \Omega : \varrho(z)<1/j\}.\]
Then $\{D_j\}_{j=1}^{+\infty}$ is a decreasing sequence of bounded hyperconvex domains, and $\{D_{-j}\}_{j=j_0}^{+\infty}$ is an increasing sequence of bounded hyperconvex domains. Here $j_0$ is a sufficiently large positive integer.

For $j>0$ or $j\le -j_0$, define
\begin{flalign*}
    \begin{split}
        \Phi^{D_j}_{\mathscr{P},\max}(z):=\sup\big\{\phi(z): & \ \phi\in \mathrm{PSH}^-(D_j), \ (\mathbf{f}_{0,i},\mathbf{z}_i)\notin \mathcal{I}(u_{0,i}+\phi)_{\mathbf{z}_i}, \\
        &\text{and} \ \phi\ge \Phi^D_{\mathscr{P},\max}+O(1) \ \text{near} \ \mathbf{z}_i, \ \forall i=1,\ldots,p \big\}
    \end{split}
\end{flalign*}
for any $z\in D_j$. Then $\Phi^{D_j} _{\mathscr{P},\max}$ is a global Zhou weight related to $\mathscr{P}$ on $D_j$, and $\Phi_{\mathscr{P},\max}^{D_j}=\Phi_{\mathscr{P},\max}^D+O(1)$ near $\mathbf{z}_i$ for any $i$ and $j$ (see Proposition \ref{prop-local.produce.global}).

\begin{Lemma}\label{lem-approx.D-j.inside}
    The sequence $\left(\Phi_{\mathscr{P},\max}^{D_{-j}}\right)_{j\ge j_0}$ converges to $\Phi_{\mathscr{P},\max}^D$ pointwisely.
\end{Lemma}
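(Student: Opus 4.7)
The plan is to establish that $\bigl(\Phi_{\mathscr{P},\max}^{D_{-j}}\bigr)_{j\ge j_0}$ is a monotone decreasing sequence (on every fixed compact subset of $D$) bounded below by $\Phi_{\mathscr{P},\max}^D$, identify its pointwise limit as a plurisubharmonic function on $D$ satisfying the defining properties of $\Phi_{\mathscr{P},\max}^D$, and then apply the maximality property (3) in Definition~\ref{def-Zhou.maximal.weight} to conclude equality.

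First I would verify the two elementary comparisons. For $j\ge j_0$, the restriction $\Phi_{\mathscr{P},\max}^D|_{D_{-j}}$ lies in $\mathrm{PSH}^-(D_{-j})$, trivially satisfies $\phi\ge\Phi_{\mathscr{P},\max}^D+O(1)$ near every $\mathbf{z}_i$, and satisfies the non-integrability condition $(\mathbf{f}_{0,i},\mathbf{z}_i)\notin\mathcal{I}(u_{0,i}+\Phi_{\mathscr{P},\max}^D)_{\mathbf{z}_i}$ by Definition~\ref{def-Zhou.maximal.weight}(2); hence it is a competitor in the supremum defining $\Phi_{\mathscr{P},\max}^{D_{-j}}$, yielding
\[\Phi_{\mathscr{P},\max}^{D_{-j}}\ge \Phi_{\mathscr{P},\max}^D \quad\text{on } D_{-j}.\]
Likewise, for $j_0\le j<j'$, since $D_{-j}\subset D_{-j'}$ and $\Phi_{\mathscr{P},\max}^{D_{-j'}}=\Phi_{\mathscr{P},\max}^D+O(1)$ near each $\mathbf{z}_i$, the restriction $\Phi_{\mathscr{P},\max}^{D_{-j'}}|_{D_{-j}}$ is again a competitor in the sup defining $\Phi_{\mathscr{P},\max}^{D_{-j}}$, so
\[\Phi_{\mathscr{P},\max}^{D_{-j}}\ge \Phi_{\mathscr{P},\max}^{D_{-j'}}\quad \text{on } D_{-j}.\]

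Next I would define
\[\Phi^*(z):=\lim_{j\to\infty}\Phi_{\mathscr{P},\max}^{D_{-j}}(z)\qquad(z\in D),\]
which exists as a pointwise decreasing limit by the monotonicity above. The limit satisfies $\Phi_{\mathscr{P},\max}^D\le\Phi^*\le 0$, so $\Phi^*\not\equiv-\infty$; being a decreasing limit of plurisubharmonic functions on an exhausting sequence of open sets, $\Phi^*$ is plurisubharmonic on $D$ (upper semicontinuity follows because sub-level sets are unions of open sets, and the sub-mean inequality passes to the limit by monotone convergence). Moreover, from $\Phi^*\le \Phi_{\mathscr{P},\max}^{D_{-j}}$ and Definition~\ref{def-Zhou.maximal.weight}(2) applied to each $\Phi_{\mathscr{P},\max}^{D_{-j}}$, the function $|\mathbf{f}_{0,i}|^2 e^{-2u_{0,i}-2\Phi^*}$ dominates $|\mathbf{f}_{0,i}|^2 e^{-2u_{0,i}-2\Phi_{\mathscr{P},\max}^{D_{-j}}}$ pointwise, hence is not integrable near $\mathbf{z}_i$ for any $i$.

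Finally I would apply the maximality property (3) in Definition~\ref{def-Zhou.maximal.weight}: $\Phi^*\in\mathrm{PSH}^-(D)$, $\Phi^*\ge\Phi_{\mathscr{P},\max}^D$ on $D$, and the requisite non-integrability holds at every pole. Therefore $\Phi^*=\Phi_{\mathscr{P},\max}^D$ on $D$, which gives the asserted pointwise convergence. The main delicate point is to secure the non-integrability at each pole under a decreasing limit; this is handled cleanly because the limit is \emph{below} each term, so the $|\mathbf{f}_{0,i}|^2 e^{-2u_{0,i}-2(\cdot)}$ quantity only increases in the limit. No uniform estimate in $j$ is required.
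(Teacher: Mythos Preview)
Your proof is correct and follows essentially the same approach as the paper: establish monotonicity and the lower bound $\Phi_{\mathscr{P},\max}^{D_{-j}}\ge\Phi_{\mathscr{P},\max}^D$, pass to the decreasing limit $\Phi^*\in\mathrm{PSH}^-(D)$, use $\Phi^*\le\Phi_{\mathscr{P},\max}^{D_{-j_0}}$ to get non-integrability at every pole, and invoke the maximality clause of Definition~\ref{def-Zhou.maximal.weight} to force $\Phi^*=\Phi_{\mathscr{P},\max}^D$. The only difference is that you spell out the competitor argument and the PSH-limit reasoning in more detail than the paper does.
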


\begin{proof}
    Observe that $\big(\Phi^{D_{-j}} _{\mathscr{P},\max}\big)_{j\ge j_0}$ is a decreasing sequence of negative plurisubharmonic functions, then the pointwise limit $\varPhi:=\lim_{j\to\infty} \Phi^{D_{-j}} _{\mathscr{P},\max}$ exists, and $\varPhi\in\mathrm{PSH}^-(D)$. In addition, we have $\varPhi\ge\Phi_{\mathscr{P},\max}^D$ since $\Phi_{\mathscr{P},\max}^{D_{-j}}\ge\Phi_{\mathscr{P},\max}^D$ on $D_{-j}$ for every $j\ge j_0$, and $(\mathbf{f}_{0,i}, \mathbf{z}_i)\notin \mathcal{I}(u_{0,i}+\varPhi)_{\mathbf{z}_i}$ for $i=1,\ldots,p$ since $\varPhi\le \Phi_{\mathscr{P},\max}^{D_{-j_0}}$. Then according to the definition of $\Phi_{\mathscr{P},\max}^D$, we get $\varPhi=\Phi_{\mathscr{P},\max}^D$ on $D$.
\end{proof}

\begin{Lemma}\label{lem-approx.Dj.outside}
    The sequence $\left(\Phi_{\mathscr{P},\max}^{D_{j}}\right)_{j\ge 1}$ converges to $\Phi_{\mathscr{P},\max}^D$ uniformly on $\overline{D}$, where $\Phi_{\mathscr{P},\max}^{D_j}\big|_{\partial D_j}$ and $\Phi_{\mathscr{P},\max}^D\big|_{\partial D}$ are defined to be $0$.
\end{Lemma}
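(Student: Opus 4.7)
The plan is to combine the competitor principle from Proposition \ref{prop-local.produce.global} with monotonicity in $j$, and then upgrade pointwise convergence to uniform convergence via Dini's theorem applied to the continuous quantities $e^{\Phi^{D_j}_{\mathscr{P},\max}}$.

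First I would establish two preparatory facts via the sup representation of Proposition \ref{prop-local.produce.global}. (a) For each $j\ge 1$, $\Phi^{D_j}_{\mathscr{P},\max}|_D\le\Phi^D_{\mathscr{P},\max}$ on $D$: by Proposition \ref{prop-global.is.local} both functions are local Zhou weights at each $\mathbf{z}_i$ for the same data, so the restriction $\Phi^{D_j}_{\mathscr{P},\max}|_D$ lies in $\mathrm{PSH}^-(D)$, satisfies the non-integrability condition at each pole, and coincides with $\Phi^D_{\mathscr{P},\max}$ up to $O(1)$ there, hence is a competitor for $\Phi^D_{\mathscr{P},\max}$. (b) The family $\{\Phi^{D_j}_{\mathscr{P},\max}|_D\}_{j\ge 1}$ is monotone increasing, since for $j_1\le j_2$ we have $D_{j_2}\subset D_{j_1}$ and $\Phi^{D_{j_1}}_{\mathscr{P},\max}|_{D_{j_2}}$ is a competitor for $\Phi^{D_{j_2}}_{\mathscr{P},\max}$.

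Set $\Phi^\infty:=\lim_{j\to\infty}\Phi^{D_j}_{\mathscr{P},\max}|_D$, the pointwise increasing limit on $D$, which satisfies $\Phi^\infty\le\Phi^D_{\mathscr{P},\max}$. I would prove the equality $\Phi^\infty=\Phi^D_{\mathscr{P},\max}$ in two parts. \emph{On $\partial D$}: using the uniform-in-$j$ lower bound $\Phi^{D_j}_{\mathscr{P},\max}\ge N\,G_{D_j,\mathbf{Z}}$ from (\ref{eq-Phi.ge.NG_D}), where $N>0$ depends only on the priori data $\mathscr{P}$ (via the strong-openness exponents and jumping numbers that are local at each pole), combined with the classical uniform convergence $G_{D_j,\mathbf{Z}}\to G_{D,\mathbf{Z}}$ on $\overline{D}$ as $D_j\searrow D$ (Demailly \cite{Dem87b}) and $G_{D,\mathbf{Z}}|_{\partial D}=0$, one obtains $\sup_{\partial D}|\Phi^{D_j}_{\mathscr{P},\max}|\to 0$, matching $\Phi^D_{\mathscr{P},\max}|_{\partial D}=0$. \emph{On the interior $D\setminus\mathbf{Z}$}: invoke the maximality $\big(dd^c\Phi^D_{\mathscr{P},\max}\big)^n=0$ (Proposition \ref{prop-maximal}), the analogous property for the usc regularization of $\Phi^\infty$ as an increasing limit of maximal locally bounded plurisubharmonic functions, and the comparison principle of Lemma \ref{lem-maximal.ddcn=0}, using the boundary matching on $\partial D$ from the first part and the $O(1)$ matching of local Zhou weights at each pole.

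Finally, Dini's theorem applied to the monotone increasing sequence of continuous functions $\{e^{\Phi^{D_j}_{\mathscr{P},\max}}\}$ on the compact $\overline{D}$, converging pointwise to the continuous function $e^{\Phi^D_{\mathscr{P},\max}}$, yields the uniform convergence on $\overline{D}$ stated in the lemma (boundary values being defined to be $0$, and the interpretation near each pole given by the uniformly bounded $O(1)$ matching of local Zhou weights). The hard part is the interior comparison step: Lemma \ref{lem-maximal.ddcn=0} does not immediately apply since the difference $\Phi^\infty-\Phi^D_{\mathscr{P},\max}$ is only known to be non-positive and bounded (not tending to $0$) near each $\mathbf{z}_i$, so the local Zhou-weight maximality at each pole (from Proposition \ref{prop-global.is.local}) must be combined with the boundary vanishing on $\partial D$ to close the argument.
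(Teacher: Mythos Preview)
Your outline has two genuine gaps.

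\textbf{The interior step.} You correctly observe that Lemma \ref{lem-maximal.ddcn=0} does not directly apply: near each pole you only have $(\Phi^\infty)^*=\Phi^D_{\mathscr{P},\max}+O(1)$, not a one-sided inequality, so the hypothesis ``$\varphi\ge u$ outside a compact set of $D\setminus\mathbf{Z}$'' is not available in either direction. Your concluding sentence (``the local Zhou-weight maximality at each pole \ldots\ must be combined with the boundary vanishing \ldots\ to close the argument'') is not a proof; I do not see how to make a comparison-principle argument work here without an additional idea. In fact, once you have established that $\mathsf C_j:=\inf_{\partial D}\Phi^{D_j}_{\mathscr{P},\max}\to 0$ (your step via Green functions), the clean way to finish is not via maximality at all but via a competitor: glue $\Phi^{D_j}_{\mathscr{P},\max}$ on $D_j\setminus D$ with $\max\{\Phi^{D_j}_{\mathscr{P},\max},\Phi^{D}_{\mathscr{P},\max}+\mathsf C_j\}$ on $\overline{D}$; this lies in $\mathrm{PSH}^-(D_j)$, has the same pole behavior, hence equals $\Phi^{D_j}_{\mathscr{P},\max}$ by its defining maximality, giving the quantitative sandwich $\Phi^{D}_{\mathscr{P},\max}+\mathsf C_j\le\Phi^{D_j}_{\mathscr{P},\max}\le\Phi^{D}_{\mathscr{P},\max}$ on $\overline{D}$. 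This is exactly the paper's argument, and it bypasses the comparison-principle difficulty entirely.

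\textbf{The Dini step.} Even granting pointwise convergence, Dini applied to $e^{\Phi^{D_j}_{\mathscr{P},\max}}$ yields only uniform convergence of $e^{\Phi^{D_j}_{\mathscr{P},\max}}$ to $e^{\Phi^{D}_{\mathscr{P},\max}}$ on $\overline{D}$. That is strictly weaker than the lemma's assertion that $\Phi^{D_j}_{\mathscr{P},\max}\to\Phi^{D}_{\mathscr{P},\max}$ uniformly: near each pole both functions tend to $-\infty$, and uniform closeness of their exponentials does not control $|\Phi^{D_j}_{\mathscr{P},\max}-\Phi^{D}_{\mathscr{P},\max}|$ there. The paper's quantitative bound $|\Phi^{D_j}_{\mathscr{P},\max}-\Phi^{D}_{\mathscr{P},\max}|\le|\mathsf C_j|$ on all of $\overline{D}$ is what is actually needed, and it does not follow from Dini.

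For the boundary step, your Green-function comparison is valid (the constant $N$ is indeed uniform in $j$ since all $\Phi^{D_j}_{\mathscr{P},\max}$ share the same local singularity type at each pole), but it outsources the work to the multipoled Green-function analogue of the very lemma you are proving. The paper instead handles this internally using the defining function $\varrho$: choosing $A>0$ with $\Phi^{D_1}_{\mathscr{P},\max}\ge A\varrho$ on a fixed sphere around each pole, the glued competitor $\max\{\Phi^{D_j}_{\mathscr{P},\max},A\varrho\}-A/j$ forces $\Phi^{D_j}_{\mathscr{P},\max}\ge -A/j$ on $\partial D$.
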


\begin{proof}
    Observe that $\left(\Phi_{\mathscr{P},\max}^{D_{j}}\right)_{j\ge 1}$ is an increasing sequence of negative functions on $\overline{D}$. Let
    \[\mathsf C_j:=\inf_{z\in\partial D}\Phi_{\mathscr{P},\max}^{D_j}(z)\in (-\infty,0), \ \forall j\ge 1.\]

    First, we verify $\lim_{j\to\infty}\mathsf C_j=0$. Let $r>0$ sufficiently small such that
    \[\bigcup_{i=1}^p\mathbb{B}^n(\mathbf{z}_i,r)=\bigsqcup_{i=1}^p\mathbb{B}^n(\mathbf{z}_i,r)\subset\subset D,\]
    where `$\bigsqcup$' denotes the disjoint union. Since $\Phi_{\mathscr{P},\max}^{D_1}\in L_{\mathrm{loc}}^{\infty}\big(D_1\setminus\{\mathbf{z}_1,\ldots,\mathbf{z}_p\}\big)$ and $\varrho$ is continuous on $D_1=\Omega$, we can choose some $A>0$ such that
    \[\Phi_{\mathscr{P},\max}^{D_1}(z)\ge A \varrho(z), \ \forall z\in \bigsqcup_{i=1}^p\partial\mathbb{B}^n(\mathbf{z}_i,r).\]
    For any $j\ge 1$, set
    \begin{equation*}
		\phi_j(z):=\left\{
		\begin{array}{ll}
			  \Phi_{\mathscr{P},\max}^{D_j}(z)-A/j& \ \text{for} \ z\in\bigsqcup_{i=1}^p\mathbb{B}^n(\mathbf{z}_i,r), \\
             \max\left\{\Phi_{\mathscr{P},\max}^{D_j}(z), A\varrho(z)\right\}-A/j & \ \text{for} \ z\in D_j\setminus\bigsqcup_{i=1}^p\mathbb{B}^n(\mathbf{z}_i,r).
		\end{array}
		\right.
	\end{equation*}
    Then $\phi_j\in\mathrm{PSH}^-(D_j)$, and $\phi=\Phi_{\mathscr{P},\max}^D+O(1)$ near each $\mathbf{z}_i$. By the definition of $\Phi_{\mathscr{P},\max}^{D_j}$, we have $\Phi_{\mathscr{P},\max}^{D_j}\ge\phi_j$ on $D_j$. It follows that
    \[\Phi_{\mathscr{P},\max}^{D_j}(z)\ge A\varrho(z)-\frac{A}{j}=-\frac{A}{j}, \ \forall z\in \partial D.\]
    Thus, $\mathsf C_j\ge -A/j$, and we get $\lim_{j\to\infty}\mathsf C_j=0$.
    
    Next, we prove that $\left(\Phi_{\mathscr{P},\max}^{D_{j}}\right)_{j\ge 1}$ converges to $\Phi_{\mathscr{P},\max}^D$ uniformly on $\overline{D}$. For any $j\ge 1$, define
	\begin{equation*}
		\Phi_j(z):=\left\{
		\begin{array}{ll}
			\Phi_{\mathscr{P},\max}^{D_j}(z) & z\in D_j\setminus D, \\
			\max\left\{\Phi_{\mathscr{P},\max}^{D_j}(z), \Phi_{\mathscr{P},\max}^D(z)+\mathsf C_j\right\} & z\in\overline{D}.
		\end{array}
		\right.
	\end{equation*}
	Then $\Phi_j\in \mathrm{PSH}^-(D_j)$ is continuous near $\partial D$, and $\Phi_j\ge \Phi_{\mathscr{P},\max}^{D_j}$ on $D_j$. On the other hand, we have
	\[\Phi_j=\Phi_{\mathscr{P},\max}^{D_j}+O(1) \ \text{near} \ \mathbf{z}_i, \ \forall i=1,\ldots,p,\]
	which implies $\Phi_j\le \Phi_{\mathscr{P},\max}^{D_j}$. Consequently, $\Phi_{\mathscr{P},\max}^D+\mathsf C_j\le \Phi_{\mathscr{P},\max}^{D_j}$ on $\overline{D}$. Thus,
	\[\Phi_{\mathscr{P},\max}^{D_j}\le \Phi_{\mathscr{P},\max}^D\le \Phi_{\mathscr{P},\max}^{D_j}-\mathsf C_j \ \text{on} \ \overline{D}.\]
	Since $\lim_{j\to\infty} \mathsf C_j=0$, the lemma is proved.
\end{proof}

We may denote $D=D_{\infty}$ in the following for convenience. For any $j\in\mathbb{Z}_{>0}\cup\mathbb{Z}_{\le -j_0}\cup\{\infty\}$, and $m\in\mathbb{N}_+$, denote
\begin{flalign*}
    \begin{split}
        \mathscr{E}_{m}(D_j)&:=\big\{f\in\mathcal{O}(D_j) : \sup_{z\in D}|f(z)|\le 1, (f,\mathbf{z}_i)\in\mathcal{I}\big(m\Phi^{D_j}_{\mathscr{P},\max}\big)_{\mathbf{z}_i}, \ \forall i=1,\ldots,p\big\},\\
        \mathscr{A}^2_{m}(D_j)&:=\big\{f\in\mathcal{O}(D_j) : \|f\|_{D_j}\le 1, (f,\mathbf{z}_i)\in\mathcal{I}\big(m\Phi^{D_j}_{\mathscr{P},\max}\big)_{\mathbf{z}_i}, \ \forall i=1,\ldots,p\big\}. 
    \end{split}
\end{flalign*}
For any $z\in D_j$, denote
\begin{flalign*}
    \begin{split}
        \phi_{m,D_j}(z)&:=\sup_{f\in\mathscr{E}_m(D_j)}\frac{1}{m}\log|f(z)|,\\
\varphi_{m,D_j}(z)&:=\sup_{f\in\mathscr{A}^2_m(D_j)}\frac{1}{m}\log|f(z)|,
    \end{split}
\end{flalign*}
and
\[h_{m,D_j}(z):=\sup\left\{\frac{1}{m}\log|f(z)| : f\in\mathcal{O}(D_j), \ \int_{D_j}|f|^2e^{-2m\Phi_{\mathscr{P},\max}^{D_j}}\le 1\right\},\]
where $h_{m,D_j}$ is actually the function appearing in Demailly's approximation theorem (Lemma \ref{lem-Dem.approx}).

\begin{Lemma}\label{lem-approx.inequalities}
For any $m\in\mathbb{N}_+$ and $j\in\mathbb{Z}_{>0}\cup\mathbb{Z}_{\le -j_0}\cup\{\infty\}$, we have

	(1) $\phi_{m,D_j}$ and $\varphi_{m,D_j}$ are continuous and plurisubharmonic functions on $D_j$, and $\phi_{m,D_j}$ takes values in $[-\infty, 0)$.

    (2) Suppose that $k,l\in\mathbb{Z}_{>0}\cup\mathbb{Z}_{\le -j_0}\cup\{\infty\}$ satisfy $D_k\subsetneq D_l$, then
	\begin{equation}
        \varphi_{m,D_l}(z)\le \phi_{m,D_k}(z)+\frac{\log(c\delta(k,l)^{-n})}{m}, \ \forall z\in D_k,
    \end{equation}
	where $c$ is a constant only depending on $n$, $\delta(k,l)=\mathrm{dist}(\overline{D_k},\partial D_l)$.

    (3) For any $z\in D_j$,
    \begin{equation}
        h_{m,D_j}(z)\le\varphi_{m,D_j}(z).
    \end{equation}

    (4) There exists a positive constant $C_1$ independent of $m$ and $j$ such that
    \begin{equation}
        h_{m,D_j}(z)\ge \Phi_{\mathscr{P},\max}^{D_j}(z)-\frac{C_1}{m}, \ \forall z\in D_j.
    \end{equation}

    (5) There exists a constant $C_2$ independent of $m$ and $j$ such that
	\begin{equation}\label{eq-phi.m.Dj.le.m-C/m.PhiP}
		\phi_{m,D_j}(z)\le \frac{m-C_2}{m}\Phi^{D_j}_{\mathscr{P},\max}(z), \ \forall z\in D_j.
	\end{equation}
\end{Lemma}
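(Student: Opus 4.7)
Parts (1)-(3) are routine. For (1), I observe that $\mathscr{E}_m(D_j)$ and $\mathscr{A}^2_m(D_j)$ are compact subsets of $\mathcal{O}(D_j)$ (in the topology of uniform convergence on compacts) by Montel's theorem together with Lemma \ref{lem-module}, which closes the germ condition $(f,\mathbf{z}_i)\in\mathcal{I}(m\Phi^{D_j}_{\mathscr{P},\max})_{\mathbf{z}_i}$ under locally uniform limits. A standard compactness argument then shows that $e^{m\phi_{m,D_j}}$ and $e^{m\varphi_{m,D_j}}$ are continuous, and since each $\frac{1}{m}\log|f|$ is plurisubharmonic, so are the suprema. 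The strict inequality $\phi_{m,D_j}<0$ follows from the normalization $\sup_{D_j}|f|\le 1$ and the non-triviality of the multiplier-ideal condition at the poles (in particular, the constant $f\equiv 1$ is ruled out by the singularity of $\Phi^{D_j}_{\mathscr{P},\max}$ at some $\mathbf{z}_i$).

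For (2), the submean-value inequality on balls of radius $\delta(k,l)$ yields $|f(z)|\le c\delta(k,l)^{-n}$ for any $f\in\mathscr{A}^2_m(D_l)$ and $z\in D_k$, with $c$ depending only on $n$. Setting $g:=c^{-1}\delta(k,l)^n\,f|_{D_k}$ gives $\sup_{D_k}|g|\le 1$, and since $\Phi^{D_k}_{\mathscr{P},\max}=\Phi^{D_l}_{\mathscr{P},\max}+O(1)$ near each $\mathbf{z}_i$ (both equal $\Phi^D_{\mathscr{P},\max}+O(1)$), the multiplier-ideal condition transfers, so $g\in\mathscr{E}_m(D_k)$. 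Applying $\frac{1}{m}\log|\cdot|$ at $z\in D_k$ and taking the sup over $f$ gives (2). For (3), any $f$ competing in $h_{m,D_j}$ satisfies $\int_{D_j}|f|^2\le \int_{D_j}|f|^2 e^{-2m\Phi^{D_j}_{\mathscr{P},\max}}\le 1$ (using $\Phi^{D_j}_{\mathscr{P},\max}\le 0$) and lies in each local multiplier ideal, so $f\in\mathscr{A}^2_m(D_j)$ and the inequality is immediate.

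Part (4) is a direct application of Demailly's approximation theorem (Lemma \ref{lem-Dem.approx}) to $\Phi^{D_j}_{\mathscr{P},\max}$ on the pseudoconvex domain $D_j$. The constant $C_1$ there depends only on $n$ and $\mathrm{diam}(D_j)$; since all $D_j\subset\Omega$ with $\Omega$ bounded, $C_1$ can be taken uniformly in $j$.

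The main content is (5), which is where I expect the principal obstacle. For any $f\in\mathscr{E}_m(D_j)$, the multiplier-ideal condition gives $c^f_{\mathbf{z}_i}(\Phi^{D_j}_{\mathscr{P},\max})\ge m$. Since $\Phi^{D_j}_{\mathscr{P},\max}$ is a local Zhou weight at $\mathbf{z}_i$ by Proposition \ref{prop-global.is.local}, Theorem \ref{thm-jump.no.Zhou.no.control} yields
\[\sigma_{\mathbf{z}_i}(\log|f|,\Phi^{D_j}_{\mathscr{P},\max})\ge c^f_{\mathbf{z}_i}(\Phi^{D_j}_{\mathscr{P},\max})+\sigma_{\mathbf{z}_i}(\log|\mathbf{f}_{0,i}|,\Phi^{D_j}_{\mathscr{P},\max})-\sigma_{\mathbf{z}_i}(u_{0,i},\Phi^{D_j}_{\mathscr{P},\max})-1\ge m-C_2,\]
where $C_2$ depends only on the fixed local data $\{(\mathbf{f}_{0,i},u_{0,i})\}_{i=1}^p$ and on the germs of $\Phi^{D_j}_{\mathscr{P},\max}$ near the poles. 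The key observation is that $\Phi^{D_j}_{\mathscr{P},\max}=\Phi^D_{\mathscr{P},\max}+O(1)$ near each $\mathbf{z}_i$, so these germs (up to $O(1)$) are independent of $j$, and so $C_2$ may be chosen independently of both $j$ and $m$. Finally, $\log|f|\in\mathrm{PSH}^-(D_j)$ since $|f|\le 1$, and Proposition \ref{prop-psi.le.sigmaZ.Phi} applied to the Zhou weight $\Phi^{D_j}_{\mathscr{P},\max}$ gives
\[\log|f|\le \sigma_{\mathbf{Z}}(\log|f|,\Phi^{D_j}_{\mathscr{P},\max})\,\Phi^{D_j}_{\mathscr{P},\max}\le (m-C_2)\Phi^{D_j}_{\mathscr{P},\max}\]
on $D_j$, where the second inequality reverses since $\Phi^{D_j}_{\mathscr{P},\max}\le 0$. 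Dividing by $m$ and taking the supremum over $f$ yields (\ref{eq-phi.m.Dj.le.m-C/m.PhiP}). The delicate point, as noted, is the uniformity of $C_2$ in $j$, which ultimately rests on the stability property $\Phi^{D_j}_{\mathscr{P},\max}=\Phi^D_{\mathscr{P},\max}+O(1)$ near each pole.
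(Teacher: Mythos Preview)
Your proposal is correct and follows essentially the same route as the paper's proof: compactness via Montel and Lemma~\ref{lem-module} for (1), the submean-value inequality for (2), the trivial inclusion for (3), Demailly's approximation with the uniform diameter bound for (4), and Theorem~\ref{thm-jump.no.Zhou.no.control} combined with Proposition~\ref{prop-psi.le.sigmaZ.Phi} for (5). You are in fact slightly more explicit than the paper in two places---noting in (2) that the multiplier-ideal condition transfers because $\Phi^{D_k}_{\mathscr{P},\max}=\Phi^{D_l}_{\mathscr{P},\max}+O(1)$ near each pole, and in (5) explaining why $C_2$ is uniform in $j$ via the same $O(1)$-stability of the germs---but these are elaborations of the same argument rather than a different approach.
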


In fact, the proof of Lemma \ref{lem-approx.inequalities} is as same as which in \cite{BGMY23}. We collate it here for the convenience of readers.
\begin{proof}
   (1) By the definitions of $\varphi_{m,D_j}$ and $\phi_{m,D_j}$, they are lower-semicontinuous on $D_j$, and $\phi_{m,D_j}$ is negative. Using Montel's theorem and the fact that $\mathscr{E}_{m}(D_j)$ and $\mathscr{A}^2_m(D_j)$ are compact subsets of $\mathcal{O}(D)$, we get that $\varphi_{m,D_j}$ and $\phi_{m,D_j}$ are upper-semicontinuous on $D_j$, thus they are continuous on $D_j$. In addition, it follows that they are plurisubharmonic on $D_j$.

   (2) Let $f\in \mathscr{A}^2_m(D_l)$, then there exists some $w\in \overline{D_k}$ such that $|f(w)|=\sup_{\overline{D}_k}|f|$. According to the mean value inequality applied to the plurisubharmonic function $|f|^2$, we have
   \begin{equation*}
       |f(w)|^2\le\frac{c^2}{\delta(k,l)^{2n}}\int_{\mathbb{B}^n(w,\delta(k,l))}|f(z)|^2\le\frac{c^2}{\delta(k,l)^{2n}}\int_{D_l}|f(z)|^2.
   \end{equation*}
   Thus,
   \[\varphi_{m,D_l}(z)\le \phi_{m,D_k}(z)+\frac{\log(c\delta(k,l)^{-n})}{m}, \ \forall z\in D_k.\]
   
   (3) Since $\Phi_{\mathscr{P},\max}^{D_j}$ is negative on $D_j$, for any $f\in A^2\big(D_j, 2m\Phi_{\mathscr{P},\max}^{D_j}\big)$, we have $f\in \mathcal{O}(D_j)\cap L^2(D_j)$, $(f,\mathbf{z}_i)\in \mathcal{I}\big(m\Phi^{D_j} _{\mathscr{P},\max}\big)_{\mathbf{z}_i}$ for $i=1,\ldots p$, and
   \[\int_{D_j}|f|^2\le\int_{D_j}|f|^2e^{-2m\Phi_{\mathscr{P},\max}^{D_j}}.\]
   Consequently, we obtain $h_{m,D_j}\le\varphi_{m,D_j}$ on $D_j$.

   (4) The inequality comes from Demailly's approximation theorem (Lemma \ref{lem-Dem.approx}). Since all $D_j$ are contained in a bounded domain $\Omega$, the constant $C_1$ can be selected to be independent of $m$ and $j$ (only dependent on the diameter of $\Omega$).

   (5) Let $f\in\mathscr{E}_m(D_j)$. Then $c^{f}_{\mathbf{z}_i}\big(\Phi_{\mathscr{P},\max}^{D_j}\big)\ge m$, $\forall i$, followed by
   \[\sigma_{\mathbf{z}_i}\left(\log|f|, \Phi_{\mathscr{P},\max}^{D_j}\right)\ge m-C_2, \ \forall i\in\{1,\ldots,p\}, \ m\in\mathbb{N}_+,\]
   according to Theorem \ref{thm-jump.no.Zhou.no.control}, where $C_2$ is a constant independent of $m$ and $j$. Thus, using Proposition \ref{prop-psi.le.sigmaZ.Phi}, we obtain
   \[\log|f|\le\frac{m-C_2}{m}\phi_{\mathscr{P},\max}^{D_j}\]
   on $D_j$ for each $f\in\mathscr{E}_m(D_j)$. Then it follows that inequality (\ref{eq-phi.m.Dj.le.m-C/m.PhiP}) holds.
\end{proof}

Now we give the proof of Theorem \ref{thm-approximation}.

\begin{proof}[\textbf{Proof of Theorem \ref{thm-approximation}}]
    First, we prove the estimates of the Zhou numbers of $\phi_m$ and $\varphi_m$ near each $\mathbf{z}_i$. According to Lemma \ref{lem-approx.inequalities}(2-5), we have
    \begin{equation}\label{eq-proof.approx.phim}
        \Phi_{\mathscr{P},\max}^{D_j}(z)-\frac{C_1}{m}-\frac{\log(c\delta(\infty,j)^{-n})}{m}\le\phi_m(z)\le\left(1-\frac{C_2}{m}\right)\Phi_{\mathscr{P},\max}^{D}(z),
    \end{equation}
    for any $z\in D$, $m\in\mathbb{N}_+$, and $j\ge 1$. Note that $\Phi_{\mathscr{P},\max}^{D_j}=\Phi_{\mathscr{P},\max}^D+O(1)$ near each $\mathbf{z}_i$. We get
    \[1-\frac{C_2}{m}\le\sigma_{\mathbf{z}_i}\big(\phi_m,\Phi_{\mathscr{P},\max}^D\big)\le 1, \ \forall i=1,\ldots,p.\]
    According to Lemma \ref{lem-approx.inequalities} (2-5), we also have
    \begin{equation}\label{eq-proof.approx.varphim}
        \Phi_{\mathscr{P},\max}^{D}(z)-\frac{C_1}{m}\le\varphi_m(z)\le\left(1-\frac{C_2}{m}\right)\Phi_{\mathscr{P},\max}^{D_{-k}}(z)+\frac{\log(c\delta(-k,\infty)^{-n})}{m},
    \end{equation}
    for any $z\in D$, $m\in\mathbb{N}_+$, and $k\ge j_0$. Thus, we also get
    \[1-\frac{C_2}{m}\le\sigma_{\mathbf{z}_i}\big(\varphi_m,\Phi_{\mathscr{P},\max}^D\big)\le 1, \ \forall i=1,\ldots,p.\]

    Next, we prove that $\phi_m$ and $\varphi_m$ converge to $\Phi_{\mathscr{P},\max}^D$ on $D$ pointwisely. Letting $m\to\infty, j\to\infty$ in equation (\ref{eq-proof.approx.phim}), by Lemma \ref{lem-approx.Dj.outside}, we get that $\phi_m$ converges to $\Phi_{\mathscr{P},\max}^D$ on $D$ pointwisely. Letting $m\to\infty, k\to\infty$ in equation (\ref{eq-proof.approx.varphim}), the same result holds for $\varphi_m$ by Lemma \ref{lem-approx.D-j.inside}.

\end{proof}

   \section{Convergence of global Zhou weights with distinct poles: proof of Theorem \ref{thm-Zvarepsilon.to.Z}}\label{sec-convergence}

    Following the method that Demailly used to prove the continuity of pluricomplex Green function on poles (Lemma 4.13 in \cite{Dem87b}), we give the following lemma, where Theorem \ref{thm-Zvarepsilon.to.Z} is a direct result of this lemma.  

    \begin{Lemma}
        Let $\mathbf{W}:=\{w_1,\ldots,w_p\}$ be a fixed set of $p$ distinct points in the bounded hyperconvex domain $D$. Then for any $\delta>0$ and any open subset $U$ of $D$ satisfying $U\supset \mathbf{W}$, there exist $U_1,\ldots,U_p\subset\subset U$ with $w_i\in U_i$, such that for any two sets of $p$ distinct points $\mathbf{Z}=\{\mathbf{z}_i\}_{i=1}^p$, $\mathbf{Z'}=\{\mathbf{z}'_i\}_{i=1}^p$ with $\mathbf{z}_i,\mathbf{z}'_i\in U_i$ for any $i$, and any $\zeta\in \overline{D}\setminus U$, we have
        \[(1+\delta)^{-1}\Phi_{\mathbf{Z},\max}^D(\zeta)\le\Phi_{\mathbf{Z'},\max}^D(\zeta)\le (1+\delta)\Phi_{\mathbf{Z},\max}^D(\zeta).\]
    \end{Lemma}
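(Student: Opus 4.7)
The plan is to follow Demailly's strategy in \cite{Dem87b} (Lemma 4.13) for the continuity of the multipoled pluricomplex Green function in its poles, adapted here to the Zhou-weight setting. The statement is symmetric under the interchange $(\mathbf{Z},\mathbf{Z}')\leftrightarrow(\mathbf{Z}',\mathbf{Z})$, so it suffices to establish one of the two bounds, say
\[
\Phi_{\mathbf{Z}',\max}^D(\zeta)\ge(1+\delta)\Phi_{\mathbf{Z},\max}^D(\zeta),\qquad \forall\,\zeta\in\overline{D}\setminus U,
\]
and deduce the other by swapping roles.

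To set up, I would first fix pairwise disjoint nested open neighborhoods $U_i\subset\subset W_i\subset\subset V_i\subset\subset U$ of each $w_i$, with the radius of the innermost $U_i$ (which confines the poles $\mathbf{z}_i,\mathbf{z}'_i$) to be shrunk at the end of the argument. Using the comparison $\Phi_{\mathbf{Z},\max}^D\ge N\,G_{D,\mathbf{Z}}$ from Remark~\ref{rem-existence}, combined with the classical uniform continuity of $G_{D,\mathbf{Z}}$ in $\mathbf{Z}$ and the local boundedness given by Proposition~\ref{prop-maximal}, one obtains uniform estimates for $\Phi_{\mathbf{Z},\max}^D$, $\Phi_{\mathbf{Z}',\max}^D$, and for each single-pole global Zhou weight $\Phi_{D,\mathbf{z}'_i}^{(\mathbf{z}'_i)}$ (attached to the data $(\mathbf{f}_i,u_i)$ at $\mathbf{z}'_i$) on the closed shell $\overline{V_i}\setminus W_i$, valid for all admissible configurations.

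The core of the proof is to construct a candidate $g\in\mathrm{PSH}^-(D)$ for the variational characterization of $\Phi_{\mathbf{Z}',\max}^D$ (Proposition~\ref{prop-local.produce.global}) satisfying $g\ge(1+\delta)\Phi_{\mathbf{Z},\max}^D$ on $\overline{D}\setminus U$. I glue the outer function $(1+\delta)\Phi_{\mathbf{Z},\max}^D$ on $D\setminus\bigcup_i V_i$ with a shifted inner function $\Phi_{D,\mathbf{z}'_i}^{(\mathbf{z}'_i)}-K_i$ on each $U_i$, via a pointwise maximum on the annulus $V_i\setminus U_i$. Continuity across $\partial U_i$ and $\partial V_i$ amounts to choosing $K_i>0$ so that
\[
\sup_{\partial V_i}\bigl(\Phi_{D,\mathbf{z}'_i}^{(\mathbf{z}'_i)}-(1+\delta)\Phi_{\mathbf{Z},\max}^D\bigr)\ \le\ K_i\ \le\ \inf_{\partial U_i}\bigl(\Phi_{D,\mathbf{z}'_i}^{(\mathbf{z}'_i)}-(1+\delta)\Phi_{\mathbf{Z},\max}^D\bigr),
\]
which is feasible once $U_i$ sits deep enough in $V_i$, since $\Phi_{D,\mathbf{z}'_i}^{(\mathbf{z}'_i)}$ is strongly singular on $\partial U_i$ while both $\Phi_{D,\mathbf{z}'_i}^{(\mathbf{z}'_i)}$ and $\Phi_{\mathbf{Z},\max}^D$ remain uniformly controlled on $\partial V_i$. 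Inside $U_i$, $g=\Phi_{D,\mathbf{z}'_i}^{(\mathbf{z}'_i)}-K_i=\phi_{\mathbf{z}'_i,\max}+O(1)$ by Proposition~\ref{prop-local.produce.global.single.pole}, so $(\mathbf{f}_i,\mathbf{z}'_i)\notin\mathcal{I}(u_i+g)_{\mathbf{z}'_i}$ and $g$ is admissible; Proposition~\ref{prop-local.produce.global} then yields $g\le\Phi_{\mathbf{Z}',\max}^D$ on $D$, and on $\overline{D}\setminus U$ (where $g=(1+\delta)\Phi_{\mathbf{Z},\max}^D$) the desired bound follows.

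The principal difficulty is making the matching quantitative so that the factor $(1+\delta)$---not a larger constant---comes out at the end. This hinges on a uniform comparison of the singularity profiles $\tau_{\mathbf{z}_i}\phi_{i,o,\max}$ and $\tau_{\mathbf{z}'_i}\phi_{i,o,\max}$ on the compact shell $\overline{V_i}\setminus W_i$: since $e^{\phi_{i,o,\max}}$ is continuous away from the origin (by Proposition~\ref{prop-continuous.exhaustive} applied to a single-pole global Zhou weight and $\phi_{i,o,\max}=\Phi_{D,o}^{(o)}+O(1)$), these two translates agree up to a bounded function whose oscillation tends to $0$ as $\mathbf{z}_i,\mathbf{z}'_i\to w_i$, which is exactly what is needed for the matching constants to absorb the factor $(1+\delta)$ when $U_i$ is chosen sufficiently small. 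Once the one-sided inequality is proved for all configurations with poles in these $U_i$'s, the symmetric argument (swapping $\mathbf{Z}$ and $\mathbf{Z}'$) gives the full two-sided bound.
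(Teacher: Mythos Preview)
Your high-level strategy---glue $(1+\delta)\Phi_{\mathbf{Z},\max}^D$ to a local model with poles at $\mathbf{Z}'$ and invoke the variational characterization of $\Phi_{\mathbf{Z}',\max}^D$---is the same as the paper's (and Demailly's). The gap is in the choice of inner matching surface.

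You match across $\partial U_i$ while the poles $\mathbf{z}_i,\mathbf{z}'_i$ are allowed to range over all of $U_i$. Then $\mathbf{z}'_i$ may sit arbitrarily close to some $\zeta_0\in\partial U_i$, forcing $\Phi_{D,\mathbf{z}'_i}^{(\mathbf{z}'_i)}(\zeta_0)$ to be as negative as one wishes, while $(1+\delta)\Phi_{\mathbf{Z},\max}^D(\zeta_0)$ stays bounded if $\mathbf{z}_i$ is on the opposite side of $U_i$. Hence
\[
\inf_{\partial U_i}\bigl(\Phi_{D,\mathbf{z}'_i}^{(\mathbf{z}'_i)}-(1+\delta)\Phi_{\mathbf{Z},\max}^D\bigr)=-\infty
\]
for some admissible configurations, and no $K_i$ satisfies your displayed inequality. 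Shrinking $U_i$ does not help: the same phenomenon recurs at every scale. Your own text already hints at the remedy---you obtain uniform estimates on $\overline{V_i}\setminus W_i$, not $\overline{V_i}\setminus U_i$---so the matching annulus must be $V_i\setminus W_i$, with the poles confined to the strictly smaller $U_i\subset\subset W_i$. That separation between the gluing surface and the pole region is exactly what makes the argument work.

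The paper implements this separation differently and more cleanly. Rather than the pole-dependent inner function $\Phi_{D,\mathbf{z}'_i}^{(\mathbf{z}'_i)}$, it fixes once and for all two reference Zhou weights $\Psi_i$ (on a small ball $\mathbb{B}^n(o,2r)$) and $\Phi_i$ (on a large ball $\mathbb{B}^n(o,R)\supset\supset D$), both attached to the origin data $\phi_{i,o,\max}$; their translates by $\mathbf{z}_i$ or $\mathbf{z}'_i$ give uniform upper and lower barriers for $\Phi_{\mathbf{Z},\max}^D$. The gluing surface is the \emph{fixed} level set $\{\Phi_i(\cdot-w_i)=-T\}$ centered at $w_i$, with $T$ chosen large so that the factor $\delta$ dominates the $O(1)$ gap between $\Psi_i$ and $\Phi_i$; only afterwards is $U_i=\mathbb{B}(w_i,\eta/4)$ chosen small enough that the translates by points of $U_i$ do not disturb the strict inequality on that level set. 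The inner function is then $\sum_j\Phi_j(\cdot-\mathbf{z}'_j)$. This avoids any appeal to continuity of the single-pole weight $\Phi_{D,\mathbf{z}'_i}^{(\mathbf{z}'_i)}$ in $\mathbf{z}'_i$ (which is the $p=1$ case of the statement under proof) and makes the $(1+\delta)$-matching fully explicit.
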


\begin{proof}
    Without loss of generality, we may choose $r>0$ sufficiently small such that
    \[\bigcup_{i=1}^p\mathbb{B}^n(w_i,2r)=\bigsqcup_{i=1}^p\mathbb{B}^n(w_i,2r)\subset\subset D,\]
    and assume that
    \[U=\bigsqcup_{i=1}^p\mathbb{B}^n(w_i,r).\]

    Let $\Psi_i\in\mathrm{PSH}^-\big(\mathbb{B}^n(o,2r)\big)$ be the unique global Zhou weight on $\mathbb{B}^n(o,2r)$ related to $|\mathbf{f}_{0,i}|^2e^{-2u_{0,i}}$ with $\Psi_i=\phi_{i,o,\max}+O(1)$ near $o$ for any $i=1,\ldots,p$. Then it is clear that for any $i\in\{1,\ldots,p\}$, and $\mathbf{z}_i\in\mathbb{B}^n(w_i,r)$,
    \[\Phi^D_{\mathbf{Z},\max}(\zeta)\le \Psi_i(\zeta-\mathbf{z}_i), \ \text{for  }  \ \zeta\in\mathbb{B}^n(w_i,r).\]
    
    On the other hand, let $R>0$ sufficiently large such that $D\subset\subset\mathbb{B}^n(o,R/2)$, and let $\Phi_i\in\mathrm{PSH}^-\big(\mathbb{B}^n(o,R)\big)$ be the unique global Zhou weight on $\mathbb{B}^n(o,R)$ related to $|\mathbf{f}_{0,i}|^2e^{-2u_{0,i}}$ with $\Phi_i=\phi_{i,o,\max}+O(1)$ near $o$ for any $i=1,\ldots,p$. Then it is also clear that
    \[\Phi^D_{\mathbf{Z},\max}(\zeta)\ge \sum_{i=1}^p\Phi_i(\zeta-\mathbf{z}_i), \ \text{for \ } \mathbf{z}_i\in\mathbb{B}^n(w_i,r), \ \zeta\in\overline{D}.\]

    Let $T>0$ sufficiently large such that
    \[\big\{z\in\mathbb{B}^n(o,R) : \Phi_i(z)\le -T\big\}\subset\subset\mathbb{B}^n(o,r), \ \forall i=1,\ldots,p,\]
    and
    \[(1+\delta)\sup_{\{z: \Phi_i(z)=-T\}}\Psi_i(z)<-T+\sum_{j\ne i}\inf_{r\le|z|<R}\Phi_j(z)\]
    for any $i\in\{1,\ldots,p\}$. Such $T$ exists since $\Phi_i\ge N_i\log|z|+O(1)$ near $o$ for some $N_i>0$, and $\Psi_i=\Phi_i+O(1)$ near $o$ for any $i$. Let
    \[U_i=\mathbb{B}^n(w_i,\eta/4),\ i=1,\ldots,p,\]
    where $\eta>0$ is sufficiently small such that
    \[(1+\delta)\sup_{z\in\{\Phi_i= -T\}+\mathbb{B}^n(o,\eta)}\Psi_i(z)<\inf_{z\in\{\Phi_i= -T\}+\mathbb{B}^n(o,\eta)}\Phi_i(z)+\sum_{j\ne i}\inf_{r\le|z|<R}\Phi_j(z)\]
    for any $i=1,\ldots,p$. Here we denote by
    \[\{\Phi_i= -T\}+\mathbb{B}^n(o,\eta):=\big\{z+a : \Phi_i(z)=-T, \ a\in\mathbb{B}^n(o,\eta)\big\}.\]
    Then for any $\mathbf{Z}=\{\mathbf{z}_i\}_{i=1}^p$, $\mathbf{Z'}=\{\mathbf{z}_i'\}_{i=1}^p$ with $\mathbf{z}_i,\mathbf{z}'_i\in U_i$ for any $i$, it holds that
    \[(1+\delta)\Phi^D_{\mathbf{Z},\max}(\zeta)<\sum_{i=1}^p\Phi_i(\zeta-\mathbf{z}'_i) \ \text{on} \ \bigcup_{i=1}^p\big\{\zeta\in D : \Phi_i(\zeta-w_i)=-T\big\}.\]
    Define
    \begin{equation*}
        v(\zeta):=\left\{
    \begin{array}{ll}
        \sum\limits_{i=1}^p\Phi_i(\zeta-\mathbf{z}'_i) & \text{if} \ \exists i,\ \Phi_i(\zeta-w_i)\le -T \\
        \max\left((1+\delta)\Phi^D_{\mathbf{Z},\max}(\zeta), \sum\limits_{i=1}^p\Phi_i(\zeta-\mathbf{z}'_i)\right) & \text{if} \ \forall i,\ \Phi_i(\zeta-w_i)\ge -T
    \end{array}
        \right.
    \end{equation*}
    for any $\zeta\in D$, then $v\in\mathrm{PSH}^-(D)$. Note that $v=\phi_{o,i,\max}+O(1)$ near $\mathbf{z}'_i$ for any $i$. By the definition of $\Phi_{\mathbf{Z}',\max}^D$, we get
    \[\Phi_{\mathbf{Z}',\max}^D(\zeta)\ge v(\zeta)\ge (1+\delta)\Phi_{\mathbf{Z},\max}^D(\zeta)\]
    for any $\mathbf{Z}=\{\mathbf{z}_i\}_{i=1}^p$, $\mathbf{Z}'=\{\mathbf{z}'_i\}_{i=1}^p$ with $\mathbf{z}_i,\mathbf{z}'_i\in U_i$ for any $i$, and any $\zeta\in \overline{D}\setminus U$. By changing the roles of $\mathbf{Z}$ and $\mathbf{Z}'$, we can also get
    \[\Phi_{\mathbf{Z},\max}^D(\zeta)\ge (1+\delta)\Phi_{\mathbf{Z}',\max}^D(\zeta)\]
    for $\mathbf{Z}$, $\mathbf{Z}'$ and $\zeta$ satisfying the same conditions. Consequently, the lemma holds.
\end{proof}

   \section{A semi-continuity result of Zhou numbers: proof of Theorem \ref{thm-semi.continuity}}\label{sec-semicontinuity}

   We need some lemmas. The following lemma is an analyticity theorem concerning relative types to maximal weights due to Rashkovskii. However, the settings of the following lemma is slightly different from the original result in \cite{Rash06}, so we will recall the proof in the Appendix for the convenience of readers.

   \begin{Lemma}[Theorem 7.1 in \cite{Rash06}, see also \cite{Rash09}]\label{Lem-Rash.ana.thm}
    Let $\Omega$ be a pseudoconvex domain in $\mathbb{C}^n$. Let $\varphi : \Omega\times\Omega\to [-\infty, +\infty)$ be a continuous plurisubharmonic function. Denote $\varphi_{\zeta}(x):=\varphi(x,\zeta)$. Assume that

    (i) $\{x : \varphi(x,\zeta)=-\infty\}=\{\zeta\}$;

    (ii) for every compact subset $K\subset\Omega$, there exists $R(K)\in (-\infty,+\infty)$ such that
    \[\big\{(x,\zeta)\in \Omega\times K : \varphi(x,\zeta)\le R(K)\big\}\subset\subset\Omega\times\Omega.\]

    (iii) $\big(dd^c\varphi\big)^n=0$ on $\{\varphi(x,\zeta)>-\infty\}$;

    (iv) $e^{\varphi(x,\zeta)}$ is  locall H\"{o}lder continuous in $\zeta$ on $\Omega\times\Omega$, i.e., for any $(x_0,\zeta_0)\in\Omega\times\Omega$, there exists a neighborhood $V\subset\Omega\times\Omega$ of $(x_0,\zeta_0)$ such that
    \[\left|e^{\varphi(x,\zeta_1)}-e^{\varphi(x,\zeta_2)}\right|\le M|\zeta_1-\zeta_2|^{\beta}, \ (x,\zeta_i)\in V, i=1,2,\]
    for some $M,\beta>0$.

    Then for any $u\in\mathrm{PSH}(\Omega)$, the set
    \[S_c(u,\varphi,\Omega)=\big\{\zeta\in\Omega : \sigma_{\zeta}(u,\varphi_{\zeta})\ge c\big\},\]
    is an analytic subset of $\Omega$ for each $c>0$. 
   \end{Lemma}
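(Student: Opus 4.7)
The plan is to adapt Demailly's proof of Siu's semi-continuity theorem to the relative-type setting, using Demailly's approximation together with the maximality and H\"{o}lder continuity of $\varphi$.

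First, I would apply Demailly's approximation theorem (Lemma \ref{lem-Dem.approx}) to $u$ on any relatively compact pseudoconvex subdomain $\Omega'\subset\subset\Omega$, writing $u_m:=\frac{1}{2m}\log\sum_k|g_{m,k}|^2$ with $\{g_{m,k}\}_k$ an orthonormal basis of $A^2(\Omega',2mu)$. The two-sided estimates from Lemma \ref{lem-Dem.approx}, combined with the monotonicity of relative types in their first argument, translate into a uniform bound $|\sigma_\zeta(u_m,\varphi_\zeta)-\sigma_\zeta(u,\varphi_\zeta)|\le C/m$ on every compact subset of $\Omega'$. This yields the identity
\[
S_c(u,\varphi,\Omega) \;=\; \bigcap_{\epsilon\in\mathbb{Q}_{>0}}\bigcap_{m\ge m_0(\epsilon)} S_{c-\epsilon}(u_m,\varphi,\Omega),
\]
and since a countable intersection of analytic sets is analytic, it suffices to handle the case $u=u_m$.

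Second, since $u_m=\frac{1}{m}\log(\sum_k|g_{m,k}|^2)^{1/2}$ is locally comparable, up to a bounded error, to $\frac{1}{m}\max_k\log|g_{m,k}|$, the tropical additivity of relative types (Corollary \ref{cor-tropical.multi.additi}(3)), extended to the countable family by the Noetherian property of multiplier ideals, gives $\sigma_\zeta(u_m,\varphi_\zeta)=\frac{1}{m}\inf_k\sigma_\zeta(\log|g_{m,k}|,\varphi_\zeta)$. Therefore it suffices to prove that, for every $g\in\mathcal{O}(\Omega)$ and every $\alpha>0$, the set $A_\alpha(g):=\{\zeta\in\Omega:\sigma_\zeta(\log|g|,\varphi_\zeta)\ge\alpha\}$ is analytic. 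The relative-type condition $\sigma_\zeta(\log|g|,\varphi_\zeta)\ge\alpha$ is equivalent to $\log|g|\le\alpha\varphi_\zeta+O(1)$ near $\zeta$; after subtracting a local sup of $\varphi_\zeta$, this forces $(g,\zeta)\in\mathcal{I}(\alpha'\varphi_\zeta)_\zeta$ for every $\alpha'<\alpha$. Using the maximality (iii) and a Demailly-type comparison, together with the strong openness property (Theorem \ref{thm-SOC}), one obtains the converse up to arbitrary loss, giving
\[
A_\alpha(g) \;=\; \bigcap_{\alpha'\in\mathbb{Q}\cap(0,\alpha)} B_{\alpha'}(g), \qquad B_{\alpha'}(g):=\{\zeta\in\Omega:(g,\zeta)\in\mathcal{I}(\alpha'\varphi_\zeta)_\zeta\}.
\]

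The main obstacle is then the analyticity of $B_{\alpha'}(g)$ for each fixed $\alpha'>0$. The strategy is to promote the family $\{\mathcal{I}(\alpha'\varphi_\zeta)_\zeta\}_{\zeta\in\Omega}$ to a coherent analytic sheaf on a neighborhood of the diagonal in $\Omega\times\Omega$. Concretely, I would consider the weight $\Phi(x,\zeta):=\alpha'\varphi(x,\zeta)\in\mathrm{PSH}(\Omega\times\Omega)$ and work with its Nadel-coherent multiplier ideal $\mathcal{I}(\Phi)$. The H\"{o}lder continuity (iv) is essential here: it ensures that $\mathcal{I}(\Phi)$ restricts correctly along the diagonal to the fiberwise ideals $\mathcal{I}(\alpha'\varphi_\zeta)_\zeta$, and via an Ohsawa--Takegoshi-type extension argument one shows that local generators of $\mathcal{I}(\Phi)$ near a diagonal point $(\zeta_0,\zeta_0)$ can be chosen so that the condition ``$g\in\mathcal{I}(\alpha'\varphi_\zeta)_\zeta$'' becomes the vanishing of finitely many holomorphic expressions in $\zeta$ on a neighborhood of $\zeta_0$. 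The properness assumption (ii) then permits one to exhaust $\Omega$ by relatively compact pieces and conclude analyticity globally. Carrying out this coherence-plus-diagonal-restriction step carefully is the technical heart of the argument, deferred to the Appendix.
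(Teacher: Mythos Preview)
Your approach diverges substantially from the paper's, and it contains real gaps.

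The paper follows the classical Kiselman--Demailly route: one sets $\Lambda(u,\varphi_\zeta,r)=\sup_{\{\varphi_\zeta<r\}}u$, uses assumption (iii) together with Demailly's fibrewise Monge--Amp\`ere result (Proposition~\ref{prop-MVyr.psh}) to show that $(\zeta,t)\mapsto\Lambda(u,\varphi_\zeta,\mathrm{Re}\,t)$ is plurisubharmonic, applies Kiselman's minimum principle to obtain plurisubharmonic functions $\mathcal U_a(\zeta)=\inf_r\{\Lambda(u,\varphi_\zeta,r)-ar\}$, and then uses the H\"older bound (iv) plus H\"ormander--Bombieri--Skoda to realize $S_c$ as a countable intersection of non-integrability loci of the $\mathcal U_a$. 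No approximation of $u$, no reduction to holomorphic functions, no multiplier-ideal restriction is needed.

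In your plan, two steps do not go through. First, the claimed two-sided estimate $|\sigma_\zeta(u_m,\varphi_\zeta)-\sigma_\zeta(u,\varphi_\zeta)|\le C/m$ is only half justified: from $u_m\ge u-C_1/m$ you get $\sigma_\zeta(u_m,\varphi_\zeta)\le\sigma_\zeta(u,\varphi_\zeta)$, but the reverse inequality would require translating $u_m(z)\le\sup_{B(z,r)}u+\tfrac{1}{m}\log(C_2/r^n)$ into a bound of the form $u_m\le c'\varphi_\zeta+O(1)$, and for that you need regularity of $\varphi_\zeta$ in the $x$-variable (e.g.\ $\varphi_\zeta\ge N\log|x-\zeta|+O(1)$), which is not among the hypotheses; assumption (iv) gives H\"older control only in $\zeta$.

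Second, and more seriously, the ``coherence plus diagonal restriction'' step for $B_{\alpha'}(g)$ fails as stated. Restriction of the Nadel sheaf $\mathcal I(\alpha'\varphi)$ on $\Omega\times\Omega$ to the diagonal does \emph{not} recover the fibrewise ideals $\mathcal I(\alpha'\varphi_\zeta)_\zeta$: already for $\varphi(x,\zeta)=\log|x-\zeta|$ on $\mathbb C\times\mathbb C$ with $\alpha'\in(1,2)$ one has $\mathcal I(\alpha'\varphi)=(x-\zeta)$, whose restriction to $\{x=\zeta\}$ is the zero ideal, whereas $\mathcal I(\alpha'\varphi_\zeta)_\zeta=\mathfrak m_\zeta$. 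Ohsawa--Takegoshi goes the wrong way (extension, not restriction), and the H\"older hypothesis (iv) does not repair this. Thus the analyticity of $B_{\alpha'}(g)$---which is effectively the whole content of the lemma---remains unproved, and the asserted identity $A_\alpha(g)=\bigcap_{\alpha'<\alpha}B_{\alpha'}(g)$ is likewise unjustified (membership in $\mathcal I(\alpha'\varphi_\zeta)$ does not by itself yield the pointwise bound $\log|g|\le\alpha\varphi_\zeta+O(1)$). The Kiselman-type argument in the Appendix avoids all of these issues.
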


   \begin{Lemma}[Proposition 3.3 in \cite{Rash13}]\label{lem-iso.ana.sing.maximal}
    Any isolated analytic singularity is maximal.
   \end{Lemma}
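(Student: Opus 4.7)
The plan is to show that any plurisubharmonic function $u$ with an isolated analytic singularity at a point $z_0$ is equivalent modulo $O(1)$ to a canonical \emph{equi-dimensional} weight of the form $c\log|H|^{2}$, with $H\colon U\to\mathbb{C}^n$ holomorphic and $H^{-1}(0)=\{z_0\}$, and that such an equi-dimensional weight is automatically maximal. In this way the singularity class of $u$ contains the maximal weight $c\log|H|^2$, which is the content of the lemma.

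\textbf{Step 1 (reduction of the defining ideal).} By hypothesis, $u=c\log|F|^{2}+O(1)$ in a neighborhood of $z_0$, with $F=(f_1,\ldots,f_N)$ holomorphic and $F^{-1}(0)=\{z_0\}$. The stalk $\mathcal{I}:=(f_1,\ldots,f_N)\mathcal{O}_{z_0}$ is then $\mathfrak{m}_{z_0}$-primary. Since $\mathbb{C}$ is an infinite residue field and $\dim\mathcal{O}_{z_0}=n$, standard commutative algebra produces a minimal reduction: generic $\mathbb{C}$-linear combinations $h_1,\ldots,h_n$ of $f_1,\ldots,f_N$ generate an ideal with the same integral closure as $\mathcal{I}$, and in particular $H:=(h_1,\ldots,h_n)$ still satisfies $H^{-1}(0)=\{z_0\}$. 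The analytic characterization of integral closure (a Brian\c{c}on--Skoda-type $L^{\infty}$ estimate) then yields constants $C_1,C_2>0$ with
\[
C_1|H(z)|\le|F(z)|\le C_2|H(z)|\quad\text{near }z_0,
\]
so $c\log|F|^2=c\log|H|^2+O(1)$, and hence $u=c\log|H|^2+O(1)$. Setting $v:=c\log|H|^2$, it suffices to show $v$ is a maximal weight.

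\textbf{Step 2 (maximality via the Hopf fibration).} Away from $z_0$, $H$ takes values in $\mathbb{C}^n\setminus\{0\}$, where $\log|w|^2$ is smooth, and the smooth-form pullback identity gives
\[
(dd^c v)^n=c^n\,H^*\!\bigl((dd^c\log|w|^2)^n\bigr)\quad\text{on }U\setminus\{z_0\}.
\]
Now $(dd^c\log|w|^2)^n\equiv 0$ on $\mathbb{C}^n\setminus\{0\}$: via the Hopf fibration $\pi\colon\mathbb{C}^n\setminus\{0\}\to\mathbb{CP}^{n-1}$, the form $dd^c\log|w|^2$ is a positive multiple of $\pi^*\omega_{\mathrm{FS}}$, so it has pointwise rank at most $n-1$ and its $n$-th exterior power vanishes identically. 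Pulling back by $H$ yields $(dd^c v)^n\equiv 0$ on $U\setminus\{z_0\}$, which is the maximality of $v$.

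\textbf{Main obstacle.} The crux of the argument is Step 1, combining two ingredients: existence of an $n$-generated reduction of the $\mathfrak{m}_{z_0}$-primary ideal $\mathcal{I}$ (formal, from the infinite residue field), and the two-sided pointwise comparability $|F|\sim|H|$ extracted from the analytic integral-closure characterization. Once this reduction is in place, Step 2 is essentially a one-line Fubini--Study computation, and the equivalence of singularity classes transfers maximality back to the original weight's class.
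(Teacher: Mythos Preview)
Your proof is correct and follows essentially the same route as the paper's: reduce $F$ to $n$ generic linear combinations $H=(h_1,\dots,h_n)$ with the same integral closure (the paper cites Northcott--Rees and \cite{CADG} VIII Lemma~10.3 for exactly this), obtain $\log|F|=\log|H|+O(1)$, and then observe that $(dd^c\log|H|)^n=0$ off the zero set. The only cosmetic difference is that where the paper invokes King's formula for the last step, you supply the direct Fubini--Study/Hopf-fibration computation, which is precisely the content of that formula in the equidimensional case.
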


   Lemma \ref{lem-iso.ana.sing.maximal} means that, for any plurisubharmonic function $u$ with analytic singularity near $o$ and $u^{-1}(-\infty)=\{o\}$, there exists a plurisubharmonic function $\tilde{u}$ on a neighborhood $V$ of $o$ such that $\tilde{u}=u+O(1)$ on $V$, and $(dd^c\tilde{u})^n=0$ on $V\setminus\{o\}$. 

   Here we need to repeat the proof of Lemma \ref{lem-iso.ana.sing.maximal} in \cite{Rash13} to help us to prove Theorem \ref{thm-semi.continuity}.

   \begin{proof}[\cite{Rash13} Proof of Lemma \ref{lem-iso.ana.sing.maximal}]
    Let $\varphi=c\log|\mathbf{F}|$, where $c>0$, and $\mathbf{F}$ is a holomorphic mapping from a neighborhood of $o$ to some $\mathbb{C}^N$. Then one can always find $n$ holomorphic functions $\xi_1,\ldots,\xi_n$, which are generic linear combinations of the components $F_j$ of $\mathbf{F}$ (see \cite{NR54}, or \cite{CADG} Chapter VIII Lemma 10.3), such that $\log|\mathbf{F}|=\log|\xi|+O(1)$, where $\xi=(\xi_1,\ldots,\xi_n)$. By King's formula, $(dd^c\log|\xi|)^n=0$ outside the zero set of $\mathbf{F}$. In addition, $\varphi=c\log|\xi|+O(1)$ near $o$.
   \end{proof}
   Note that $e^{c\log|\xi|}$ is H\"{o}lder continuous near $o$.

Now we give the proof of Theorem \ref{thm-semi.continuity}.

\begin{proof}[\textbf{Proof of Theorem \ref{thm-semi.continuity}}]
    Let $a>0$ such that $\phi_{o,\max}\in\mathrm{PSH}^-(\mathbb{B}^n(o,4a))$. Denote $\{h_m\}$ be the sequence of Demailly's approximation (Lemma \ref{lem-Dem.approx}) of $\phi_{o,\max}$ on $\mathbb{B}^n(o,4a)$:
    \[h_m(z):=\sup\left\{\frac{1}{m}\log|f(z)| : f\in\mathcal{O}\big(\mathbb{B}^n(o,4a)\big), \ \int_{\mathbb{B}^n(o,4a)}|f|^2e^{-2m\phi_{o,\max}}\le 1\right\}.\]
    Using Lemma \ref{lem-approx.inequalities} (or inequality (\ref{eq-Phim.PhiP}) in the proof of Proposition \ref{prop-continuous.exhaustive}), for the local Zhou weight $\phi_{o,\max}$, we have that there exists positive constants $C_1$, $C_2$ and $C_3$ independent of $m$ such that
    \begin{equation}\label{eq-phiomax.hm.phiomax}
        \phi_{o,\max}(z)-\frac{C_1}{m}\le h_m(z)\le \left(1-\frac{C_2}{m}\right)\phi_{o,\max}(z)+C_3, \ \forall z\in \mathbb{B}^n(o,2a),
    \end{equation}
    for any $m\in\mathbb{N}_+$.
    
    The function $h_m$ has isolated analytic singularity near $o$. Lemma \ref{lem-iso.ana.sing.maximal} shows that we can find a maximal plurisubharmonic function $\tilde{h}_m$ near $o$, such that $\tilde{h}_m=h_m+O(1)$ near $o$. Thus, the relative type to $h_m$
    \[\sigma_o(\psi,h_m):=\sup\big\{b : \psi\le bh_m+O(1) \ \text{near} \ o\big\}\]
    coincides with the relative type to $\tilde{h}_m$. By the proof of Lemma \ref{lem-iso.ana.sing.maximal}, the function $\tilde{h}_m$ can be chosen to satisfy that $e^{\tilde{h}_m}$ is H\"{o}lder continuous near $o$. Since we only consider the relative types to $\tilde{h}_m$, without loss of generality, we assume that $\tilde{h}_m$ is negative, H\"{o}lder continuous, and maximal on $\mathbb{B}^n(o,2a)$. Now we deduce that the set $E_c(\psi,h_m)=E_c(\psi,\tilde{h}_m)$ is an analytic subset for any $\psi\in\mathrm{PSH}(D)$, $c>0$, and $m\in\mathbb{N}_+$ by Lemma \ref{Lem-Rash.ana.thm}.

    Denote
    \[\varphi(x,\zeta):=\tilde{h}_m(x-\zeta), \ \forall x,\zeta\in\mathbb{B}^n(o,a).\]
    Then it can be checked that $\varphi$ satisfies the statements (i)-(iv) in Lemma \ref{Lem-Rash.ana.thm} for $\Omega=\mathbb{B}^n(o,a)$. Thus, for any $u\in\mathrm{PSH}\big(\mathbb{B}^n(o,a)\big)$, the set
    \[S_{c}\big(u,\varphi,\mathbb{B}^n(o,a)\big)=\big\{\zeta\in\mathbb{B}^n(o,a) : \sigma_{\zeta}(u,\varphi_{\zeta})\ge c\big\}\]
    is an analytic subset of $\mathbb{B}^n(o,a)$ for each $c>0$. We conclude that
    \[E_c(u,\tilde{h}_m)=\big\{z\in \mathbb{B}^n(o,a) : \sigma_z(u,\tau_z\tilde{h}_m)\ge c\big\}\]
    is an analytic subset of $\mathbb{B}^n(o,a)$ for any $u\in\mathrm{PSH}\big(\mathbb{B}^n(o,a)\big)$ and $c>0$. It is also true for $h_m$. Since our desired result is local, for any domain $D$ in $\mathbb{C}^n$, we get that the set
    \[E_c(\psi,h_m):=\big\{z\in D : \sigma_z(\psi,\tau_zh_m)\ge c\big\}\]
    is an analytic subset of $D$ for any $\psi\in\mathrm{PSH}(D)$, $c>0$, and $m\in\mathbb{N}_+$.

    Finally, we prove that $E_c(\psi,\phi_{o,\max})$ is an analytic subset. Let $u$ be a plurisubharmonic function near $o$. According to equation (\ref{eq-phiomax.hm.phiomax}), we have that
    \[\left(1-\frac{C_2}{m}\right)\sigma_o(u,h_m)\le\sigma_o(u,\phi_{o,\max})\le\sigma_{o}(u,h_m)\]
    for any $m\in\mathbb{N}_+$. Thus, on any domain $D$, we can write
    \[E_c(\psi,\phi_{o,\max})=\bigcap_{m\ge 1}E_c(\psi,h_m).\]
    Now the fact that $E_c(\psi,h_m)$ are all analytic subsets of $D$ implies that $E_c(\psi,\phi_{o,\max})$ is an analytic subset of $D$.
\end{proof}

\begin{Remark}
    Since the only consequece of $\phi_{o,\max}$ being a local Zhou weight we used in the above proof is the holding of equation (\ref{eq-phiomax.hm.phiomax}), which also holds when $\phi_{o,\max}$ is just a tame maximal weight. Thus, replacing $\phi_{o,\max}$ by a tame maximal weight (see \cite{BFJ08}) near $o$, Theorem \ref{thm-semi.continuity} still holds.
\end{Remark}

\vspace{.1in} {\em Acknowledgements.}
The second named author was supported by National Key R\&D Program of China 2021YFA1003100, NSFC-11825101, NSFC-11522101 and NSFC-11431013. The forth named author was supported by China Postdoctoral Science Foundation BX20230402.

\section{Appendix: Proof of Lemma \ref{Lem-Rash.ana.thm}}\label{sec-appendix}
We give the proof of Lemma \ref{Lem-Rash.ana.thm} by doing some slight changes of the proof of Theorem 7.1 in \cite{Rash06}.
\begin{proof}[\textbf{Proof of Lemma \ref{Lem-Rash.ana.thm}}]
Since the result is local, we may assume $\Omega=\mathbb{B}^n(o,R)$ without loss of generality. Denote $\Omega':=\mathbb{B}(o,R/2)$. After addition of a constant to $\varphi$, with (ii), we may also assume that there exists a compact subset $K\subset \Omega$ such that
\[\{(x,\zeta)\in\Omega\times\Omega' : \varphi(x,\zeta)\le 0\}\subset K\times\Omega'.\]
By Proposition \ref{prop-MVyr.psh} \cite[Theorem 6.11]{Dem85} (set $X=\{\varphi<0\}$, $Y=\pi(X)$, $R\equiv 0$, and $A\equiv -\infty$ in Proposition \ref{prop-MVyr.psh}, where $\pi : (x,\zeta)\mapsto \zeta$), the function
\[\mathcal{U}(\zeta,t):=\Lambda(u,\varphi_{\zeta},\mathrm{Re\ }t)\]
is plurisubharmonic on $\big\{(\zeta,t)\in\Omega'\times\mathbb{C} : \mathrm{Re\ }t<0\big\}$, where the semi-continuity of $\mathcal{U}$ comes from the semi-continuity of $u$.

Given $a>0$, the function $\mathcal{U}(\zeta,t)-a\mathrm{Re\ }t$ is plurisubharmonic and independent of $\mathrm{Im\ }t$. By Kiselman's minimum principle (Proposition \ref{prop-Kmp}), the function
\[\mathcal{U}_a(\zeta):=\inf\big\{\Lambda(u,\varphi_{\zeta},r)-ar : r<0\big\}\]
is plurisubharmonic on $\Omega$.

Let $\zeta\in\Omega'$.  Note that
\[\sigma_{\zeta}(u,\varphi_{\zeta})=\lim_{r\to-\infty}r^{-1}\Lambda(u,\varphi_{\zeta},r).\]

If $a>\sigma_{\zeta}(u,\varphi_{\zeta})$, then there exists $r_a<0$ such that $\Lambda(u,\varphi_{\zeta},r)>ar$ for all $r\le r_a$. For $r_a<r<0$, we have
\[\Lambda(u,\varphi_{\zeta},r)-ar\ge \Lambda(u,\varphi_{\zeta},r_a).\]
Therefore, $\mathcal{U}_a(\zeta)>-\infty$.

Now suppose $a<\sigma_{\zeta}(u,\varphi_{\zeta})$. By (ii), one can take a compact subset $L\subset\Omega'$ such that $\zeta\in L$, and
\[\big\{(x,y)\in \Omega\times L : \varphi(x,y)\le R(L)\big\}\subset K\times L,\]
for some $R(L)<0$, and $e^{\varphi}$ is H\"{o}lder continuous in $\zeta$ on $K\times L$ with some constants $M,\beta$ as like in (iv), which implies that there exists a neighborhood $V_{\zeta}\subset L$ of $\zeta$ and a constant $C_1>0$ such that
\begin{flalign*}
    \begin{split}
        \Lambda(u,\varphi_{\zeta'},r)\le&\Lambda\big(u,\varphi_{\zeta},\log(e^{r}+M|\zeta'-\zeta|^{\beta})\big)\\
        \le&\sigma_{\zeta}(u,\varphi_{\zeta})\log\big(e^{r}+M|\zeta'-\zeta|^{\beta}\big)+C_1,
    \end{split}
\end{flalign*}
for any $\zeta'\in V_{\zeta}$ and $r<R(L)-1$. Denote $r_{\zeta'}:=\beta\log|\zeta'-\zeta|$. If $\zeta'\in V_{\zeta}$ and $r_{\zeta'}<R(L)-1$, then
\begin{flalign}\label{eq-proof.Rash.ana}
    \begin{split}
        \mathcal{U}_a(\zeta')\le&\Lambda(u,\varphi_{\zeta'},r_{\zeta'})-ar_{\zeta'}\\
        \le&\big(\sigma_{\zeta}(u,\varphi_{\zeta})-a\big)\beta\log|\zeta'-\zeta|+C_2,
    \end{split}
\end{flalign}
for some constant $C_2$.

Given $a,b>0$, let
\[\mathcal{Z}_{a,b}:=\left\{\zeta\in\Omega' : e^{-\mathcal{U}_a/b} \ \text{is not integrable near} \ \zeta\right\}.\]
It follows from a consequece of H\"{o}rmander-Bombieri-Skoda theorem (Proposition \ref{prop-HBS}) that the sets $\mathcal{Z}_{a,b}$ are all analytic subsets of $\Omega'$. In addition, we have $\mathcal{Z}_{a',b'}\supset \mathcal{Z}_{a'',b''}$ if $a'\le a''$, $b'\le b''$.

If $\zeta\notin S_c(u,\varphi,\Omega')$ and $\sigma_{\zeta}(u,\varphi_{\zeta})<a<c$, then $\mathcal{U}_a(\zeta)>-\infty$, so $\zeta\notin\mathcal{Z}_{a,b}$ for all $b>0$ by Skoda's theorem (Proposition \ref{prop.Skoda.thm}).

If $\zeta\in S_c(u,\varphi,\Omega')$, $a<c$ and $b<(c-a)\beta(2n)^{-1}$, then (\ref{eq-proof.Rash.ana}) implies $\zeta\in\mathcal{Z}_{a,b}$.

Thus, we can write
\[S_c(u,\varphi,\Omega')=\bigcap_{a<c, \ b<\frac{(c-a)\beta}{2n}}\mathcal{Z}_{a,b}.\]
Therefore, $S_c(u,\varphi,\Omega')$ is an analytic subset of $\Omega$, and so is $S_c(u,\varphi,\Omega)$ since the result is local.
\end{proof}

We list below some propositions we used in the above proof.

\begin{Proposition}[see Theorem 6.11 in \cite{Dem85}]\label{prop-MVyr.psh}
    Let $\pi : X\to Y$ be a morphism between analytic spaces with pure dimensions $\dim X=m+n$, $\dim Y=m$, $\varphi : [-\infty, +\infty)$ a continuous plurisubharmonic function, and $R : Y\to (-\infty,+\infty]$ (resp. $A : Y\to [-\infty,+\infty)$) a lower-semicontinuous (resp. upper-semicontinuous) function satisfying:

    (a) $\pi$ is surjective, and for any $y\in Y$, the fiber $\pi^{-1}(y)$ has pure dimension $n$;

    (b) $\pi$ is a Stein morphism, i.e. $Y$ has an open covering $(\Omega_j)_{j\in J}$, such that $\pi^{-1}(\Omega_j)$ is Stein for any $j\in J$.

    (c) $\varphi(x)<R(\pi(x))$ and $A(y)<R(y)$ for any $x\in X$, $y\in Y$.

    (d) For any $y\in Y$ and $r<R(y)$, there exists a neighborhood $U$ of $y$ in $Y$ such that $\pi^{-1}(U)\cap B(r)\subset\subset X$.

    (e) $(dd^c\varphi)^n\equiv 0$ on the open set $\big\{x\in X : \varphi(x)>A(\pi(x))\big\}$.

    Denote $B(r)=\{\varphi<r\}$. For any plurisubharmonic function $V$ on $X$, $y\in Y$, and $r\in (-\infty, R(y))$, let
    \[M^{\infty}_{V}(y,r)=\sup_{\pi^{-1}(y)\cap B(r)}V.\]

    Then for fixed $r$, the funtion $y\mapsto M^{\infty}_V(y,r)$ is a weakly plurisubharmonic function on the open set $\big\{y\in Y : A(y)<r<R(y)\big\}$.
\end{Proposition}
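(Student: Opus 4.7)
The plan is to verify the two defining properties of weak plurisubharmonicity for
\[
M(y) := M^{\infty}_V(y,r) = \sup_{\pi^{-1}(y) \cap B(r)} V
\]
on $\Omega_r := \{y \in Y : A(y) < r < R(y)\}$: upper semicontinuity in $y$ and the sub-mean-value inequality along holomorphic discs $f : \overline{\Delta} \to \Omega_r$. A preliminary reduction via (b) lets me work locally on an open $U \subset \Omega_r$ with $\pi^{-1}(U)$ Stein. After approximating $V$ by a decreasing sequence of smooth plurisubharmonic functions (possible on the Stein space $\pi^{-1}(U)$), I may treat the smooth case first and recover the general one via upper envelopes.

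Upper semicontinuity at $y_0 \in \Omega_r$: by hypothesis (d) I arrange $\pi^{-1}(\overline{U}) \cap B(r) \subset\subset X$. For any sequence $y_n \to y_0$, choose near-maximizers $x_n \in \pi^{-1}(y_n) \cap B(r)$ with $V(x_n) \ge M(y_n) - 1/n$, and extract a convergent subsequence $x_n \to x_\infty$. Continuity of $\varphi$ yields $\varphi(x_\infty) \le r$, upper semicontinuity of $V$ yields $\limsup V(x_n) \le V(x_\infty)$; a perturbation $r \mapsto r + \epsilon$ handles the borderline case $\varphi(x_\infty) = r$, giving $\limsup_{y \to y_0} M(y) \le M(y_0)$.

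For the sub-mean-value along $f$ with $f(0) = y_0$, the core step is a disc-lifting argument. Given $\eta > 0$ and $x_0 \in \pi^{-1}(y_0) \cap B(r)$ with $V(x_0) \ge M(y_0) - \eta$, I aim to construct a holomorphic $F : \overline{\Delta} \to X$ with $\pi \circ F = f$, $F(0) = x_0$, and $F(\overline{\Delta}) \subset B(r)$. Granting such $F$, the composition $V \circ F$ is subharmonic on $\Delta$, so
\[
M(y_0) - \eta \le V(F(0)) \le \frac{1}{2\pi}\int_0^{2\pi} V(F(e^{i\theta}))\,d\theta \le \frac{1}{2\pi}\int_0^{2\pi} M(f(e^{i\theta}))\,d\theta,
\]
and letting $\eta \to 0$ delivers the desired inequality.

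The main obstacle is producing the lift $F$ with image contained in $B(r)$. Stein-ness of $\pi^{-1}(U)$ combined with Cartan--Oka principles supplies \emph{some} holomorphic lift $F : \Delta \to \pi^{-1}(U)$ with $\pi \circ F = f$ and $F(0) = x_0$; confining it to $B(r)$ is the delicate point, and this is exactly where hypothesis (e) is used. The maximality $(dd^c\varphi)^n \equiv 0$ on $\{\varphi > A \circ \pi\}$ forces the sublevel set $B(r)$ to be pseudoconvex in the fibered sense over $Y$, so that lifts staying inside $B(r)$ exist. In practice I would pick $r' \in (\varphi(x_0), r)$ with $x_0 \in B(r')$, exploit the fibered pseudoconvexity of $B(r')$ to produce a lift satisfying $\varphi \circ F < r$ on $\overline{\Delta}$ (e.g.\ via an $L^2$-minimization on the Stein space $\pi^{-1}(f(\Delta))$ with weight $\varphi$), and then let $r' \nearrow r$. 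Finally, passing to the supremum over near-maximizers $x_0$ together with the smooth approximation of $V$ concludes the proof.
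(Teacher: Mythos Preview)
The paper does not give its own proof of this proposition: it is quoted verbatim from \cite{Dem85} (Th\'eor\`eme~6.11) and only used as a black box in the Appendix. So there is nothing in the paper to compare your argument against, and the relevant benchmark is Demailly's original proof.

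Your upper-semicontinuity argument is fine, but the disc-lifting step contains a genuine gap. First, the assertion that ``Stein-ness of $\pi^{-1}(U)$ combined with Cartan--Oka principles supplies \emph{some} holomorphic lift $F:\Delta\to\pi^{-1}(U)$ with $\pi\circ F=f$ and $F(0)=x_0$'' is not justified: the Oka--Grauert principle concerns sections of fiber bundles (or maps into Oka manifolds), not arbitrary surjective Stein morphisms of analytic spaces with possibly singular, varying fibers of pure dimension~$n$. No general lifting theorem of this kind is available. Second, even granting a lift, the mechanism you invoke to force $F(\overline{\Delta})\subset B(r)$ is not made precise: ``pseudoconvex in the fibered sense'' is left undefined, and the proposed $L^2$-minimization with weight $\varphi$ does not produce holomorphic \emph{discs} (it produces holomorphic \emph{functions} on a given Stein space, which is a different object). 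Maximality $(dd^c\varphi)^n=0$ does imply that each fiberwise sublevel set is pseudoconvex, but that alone does not yield holomorphic discs through $x_0$ lying entirely in $B(r)$.

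Demailly's proof in \cite{Dem85} does not proceed by lifting discs. It works instead with integral means $M^p_V(y,r)$ against the Monge--Amp\`ere measures $(dd^c\varphi)^n$ on the fibers, establishes plurisubharmonicity of those via a direct computation exploiting condition~(e), and then recovers $M^\infty_V$ as a limit $p\to\infty$. If you want to repair your approach, you would need either to supply a genuine disc-lifting theorem adapted to this setting (which would be a substantial result in itself) or to switch to Demailly's integral-mean strategy.
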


A locally integrable function $u$ on a complex space $X$ with pure dimension $n$ is called a weakly plurisubharmonic function, if $dd^c u\ge 0$ in the sense of currents. For upper-semicontinuous functions on a complex manifold $X$, the weakly plurisubharmonic functions on $X$ coincides with the plurisubharmonic functions.

\begin{Proposition}[Kiselman's minimum principle, \cite{Ki78}]\label{prop-Kmp}
Let $\Omega\subset\mathbb{C}^m\times\mathbb{C}^n$ be a pseudoconvex open set such that each slice
\[\Omega_{\zeta}=\big\{z\in\mathbb{C}^n : (\zeta,z)\in\Omega\big\}, \ \zeta\in\mathbb{C}^m,\]
is a convex tube $\omega_{\zeta}+i\mathbb{R}^n$, $\omega_{\zeta}\subset\mathbb{R}^n$. For every plurisubharmonic function $v(\zeta,z)$ on $\Omega$ that does not depend on $\mathrm{Im\ }z$, the function
\[u(\zeta)=\inf_{z\in\Omega_{\zeta}}v(\zeta,z)\]
is plurisubharmonic of locally $\equiv-\infty$ on $\Omega'=\mathrm{pr}_{\mathbb{C}^n}(\Omega)$.
\end{Proposition}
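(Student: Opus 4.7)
The plan is to verify (i) upper semi-continuity of $u$ and (ii) the sub-mean value inequality along every holomorphic disc in $\Omega'$; together these give that $u$ is either plurisubharmonic or locally $\equiv -\infty$. Upper semi-continuity is the easy part: given $\zeta_0 \in \Omega'$ and $c > u(\zeta_0)$, pick $x_0 \in \omega_{\zeta_0}$ with $v(\zeta_0, x_0) < c$---using that $v$ is independent of $\mathrm{Im}\, z$, so the infimum may be evaluated at the real slice $z = x_0$. By upper semi-continuity of $v$ and openness of $\Omega$, $(\zeta, x_0) \in \Omega$ and $v(\zeta, x_0) < c$ throughout a neighborhood $U$ of $\zeta_0$, hence $u(\zeta) \leq v(\zeta, x_0) < c$ on $U$.

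For the sub-mean value inequality I would reduce to the case where $v$ is smooth, strictly plurisubharmonic, and $v(\zeta, \cdot)$ is coercive on $\omega_\zeta$ with a unique minimizer $x^*(\zeta)$. Convolving $v$ with a smooth bump in the real directions $(\mathrm{Re}\,\zeta, \mathrm{Im}\,\zeta, \mathrm{Re}\, z)$ preserves plurisubharmonicity (each shift is a holomorphic translation of $v$) and preserves tube-invariance, and is well-defined on sub-domains $\Omega^{(k)} \Subset \Omega$ exhausting $\Omega$. Adding a small tube-invariant strictly plurisubharmonic term $\epsilon(|\zeta|^2 + |\mathrm{Re}\, z|^2)$ (the second summand has complex Hessian $\tfrac14 I$ in $z$) yields smooth strictly plurisubharmonic approximants $v_k \downarrow v$. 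A further coercivizing device, such as an additional $\epsilon \log(1 + |\mathrm{Re}\, z|^2)$ term together with an exhaustion of $\omega_\zeta$ by compacts, can be used to guarantee the infimum is attained.

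In the reduced setting, $x^*(\zeta)$ depends smoothly on $\zeta$ by the implicit function theorem applied to the stationarity condition $v_x(\zeta, x^*(\zeta)) = 0$. The envelope theorem then gives $u_{\zeta_j} = v_{\zeta_j}(\zeta, x^*)$, and differentiating once more, using $\partial x^*_m/\partial \bar\zeta_k = -(v_{xx})^{-1}_{m\ell}\, v_{x_\ell \bar\zeta_k}$, yields
\[
u_{\zeta_j \bar\zeta_k} \;=\; v_{\zeta_j \bar\zeta_k} \;-\; v_{\zeta_j x_\ell}\,(v_{xx})^{-1}_{\ell m}\, v_{x_m \bar\zeta_k}.
\]
Since $v$ depends only on $x = \mathrm{Re}\, z$, the identities $v_x = 2v_{\bar z}$ and $v_{xx} = 4 v_{z\bar z}$ allow one to rewrite the right-hand side as the Schur complement of the $z\bar z$-block in the complex Hessian of $v$. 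This Schur complement is non-negative because the full complex Hessian of $v$ is non-negative (plurisubharmonicity) while the $z\bar z$-block is positive definite (strict convexity in $x$). Hence $u_{\zeta\bar\zeta} \geq 0$, i.e., $u$ is plurisubharmonic in the reduced setting.

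Finally, pass to the limit $k \to \infty$: for pointwise decreasing $v_k \downarrow v$ the corresponding infima $u_k \downarrow u$, and decreasing limits of plurisubharmonic functions are plurisubharmonic (using that $u$ is already upper semi-continuous by Step 1, so coincides with its u.s.c.\ regularization). The main obstacle will be the regularization step: one must carefully preserve tube-invariance under smoothing, ensure that the coercivizing modification does not perturb the limit, and verify that the approximants remain plurisubharmonic on exhausting pseudoconvex sub-domains so that their infima behave well. A secondary issue is the alternative $u \equiv -\infty$ on connected components of $\Omega'$, which occurs precisely when the approximation degenerates and has to be isolated in the limiting argument.
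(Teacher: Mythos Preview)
The paper does not give its own proof of this proposition; it is simply quoted from \cite{Ki78} as a tool for the appendix argument. So there is no ``paper's proof'' to compare against, only Kiselman's original.

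Your strategy is the right one and matches Kiselman's approach: reduce to smooth strictly plurisubharmonic $v$, observe that at the unique minimizer $x^*(\zeta)$ the complex Hessian of $u$ equals the Schur complement of the $z\bar z$-block in the complex Hessian of $v$, and conclude positivity. Your identification $v_x=2v_{\bar z}$, $v_{xx}=4v_{z\bar z}$ and the resulting formula
\[
u_{\zeta_j\bar\zeta_k}=v_{\zeta_j\bar\zeta_k}-v_{\zeta_j x_\ell}(v_{xx})^{-1}_{\ell m}v_{x_m\bar\zeta_k}
\]
as the Schur complement is correct and is the heart of the argument.

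There is one concrete slip in the approximation step: the suggested coercivizing term $\epsilon\log(1+|\mathrm{Re}\,z|^2)$ is \emph{not} plurisubharmonic---in one variable, $\partial_{z\bar z}\log(1+x^2)=\tfrac14\,\partial_x^2\log(1+x^2)=\tfrac{1-x^2}{2(1+x^2)^2}<0$ for $|x|>1$---so it cannot be added without destroying the hypothesis. Fortunately it is also largely unnecessary: tube-invariance forces $v(\zeta,\cdot)$ to be convex in $x=\mathrm{Re}\,z$ (since $v_{xx}=4v_{z\bar z}\ge0$), and a finite convex function on an open convex set is minorized by an affine function, so the quadratic perturbation $\epsilon|x|^2$ by itself already forces the infimum to be attained along any direction going to infinity in $\omega_\zeta$. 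The remaining case---that the infimum might be approached at a finite boundary point of $\omega_\zeta$---is what requires a genuine exhaustion argument rather than an extra penalty term. Kiselman handles this by working with the partial Legendre transform; Demailly's exposition in \cite{CADG} gives another clean route. Your proposal correctly flags this step as the main obstacle; the fix is to drop the logarithmic term and argue via exhaustion or the Legendre transform directly.
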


\begin{Proposition}[Consequece of H\"{o}rmander-Bombieri-Skoda theorem, see Corollary 7.7 in \cite{CADG}]\label{prop-HBS}
Let $u$ be a plurisubharmonic function on a complex manifold $X$. The set of points in a neighborhood of which $e^{-u}$ is not integrable is an analytic subset of $X$.
\end{Proposition}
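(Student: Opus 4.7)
The plan is to realize the non-integrability set as the support of a coherent analytic sheaf, and then invoke the fact that the support of any coherent sheaf is an analytic subset. First I would set $v := u/2$, which is still plurisubharmonic, so that $e^{-u} = e^{-2v}$. Introducing Nadel's multiplier ideal sheaf $\mathcal{I}(v) \subset \mathcal{O}_X$ with stalk
\[\mathcal{I}(v)_x = \big\{ f \in \mathcal{O}_{X,x} : |f|^2 e^{-2v} \text{ is integrable near } x \big\},\]
the non-integrability locus in the statement becomes exactly
\[Z = \{x \in X : 1 \notin \mathcal{I}(v)_x\} = \mathrm{supp}\bigl(\mathcal{O}_X / \mathcal{I}(v)\bigr).\]

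Next I would invoke Nadel's coherence theorem: for every plurisubharmonic function $v$ on a complex manifold $X$, the multiplier ideal sheaf $\mathcal{I}(v)$ is a coherent analytic subsheaf of $\mathcal{O}_X$. Granting this, the quotient $\mathcal{O}_X / \mathcal{I}(v)$ is coherent and hence its support $Z$ is an analytic subset of $X$, which is the desired conclusion.

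The main obstacle is therefore Nadel's coherence theorem itself, which is the genuine analytic input. To produce it, one works locally on a small pseudoconvex neighborhood $U \ni x_0$ and considers the Hilbert space $A^2(U,2v) = \{ f \in \mathcal{O}(U) : \int_U |f|^2 e^{-2v} < \infty\}$. One then takes an orthonormal basis $(\sigma_j)$ of this space, and uses H\"{o}rmander's $L^2$ existence theorem for $\bar\partial$ in its Bombieri--Skoda form to produce, for any $f \in \mathcal{I}(v)_x$ near $x_0$ and any $N \gg 0$, an approximation of $f$ modulo $\mathfrak{m}_{x_0}^N$ by a finite $\mathcal{O}$-linear combination of the $\sigma_j$. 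Combining this with the Noetherian property of $\mathcal{O}_{X,x_0}$ and the Krull intersection theorem shows that finitely many of the $\sigma_j$ generate $\mathcal{I}(v)$ near $x_0$; an analogous argument applied to the relation sheaf then yields coherence. Once this classical ingredient is granted, the three-line reduction above closes the proof.
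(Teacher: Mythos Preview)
The paper does not supply its own proof of this proposition: it is quoted verbatim as a known result from Demailly's book \cite{CADG} (Corollary~7.7) and used as a black box in the Appendix. Your argument via Nadel's coherence theorem is correct, and in fact your final paragraph sketching the proof of coherence (H\"ormander $L^2$ estimates to approximate germs modulo $\mathfrak{m}^N$, then Krull/Noetherian) \emph{is} the H\"ormander--Bombieri--Skoda argument that underlies the cited corollary. The only difference is packaging: the classical proof identifies the non-integrability locus directly as the common zero set of the weighted Bergman space $A^2(U,u)$ and then argues finitely many generators suffice, whereas you first upgrade this to the coherence of $\mathcal{I}(u/2)$ and then read off the analyticity of $\mathrm{supp}(\mathcal{O}_X/\mathcal{I}(u/2))$. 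Your route is slightly heavier (it proves more than needed) but is nowadays the standard formulation and costs nothing extra once Nadel's theorem is available.
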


\begin{Proposition}[Skoda's theorem, \cite{Sko72}]\label{prop.Skoda.thm}
Let $u$ be a plurisubharmonic function near $z\in\mathbb{C}^n$. If the Lelong number $\nu(u,z)<1$, then $e^{-2u}$ is integrable near $z$.
\end{Proposition}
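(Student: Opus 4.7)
My plan is to reduce the $L^1$-integrability of $e^{-2u}$ to a sublevel-set decay estimate, then obtain the decay estimate via a one-dimensional slicing argument. Without loss of generality translate $z$ to the origin, choose $r_0>0$ so that $u\le 0$ on $B_0:=B(0,r_0)$, and set $\gamma:=\nu(u,0)<1$. The layer-cake formula followed by the substitution $t=e^{2\tau}$ gives
\[
\int_{B_0}e^{-2u}\,d\lambda=|B_0|+2\int_0^{\infty}e^{2\tau}\,|\{u<-\tau\}\cap B_0|\,d\tau,
\]
so the whole question reduces to proving the decay bound
\[
|\{u<-\tau\}\cap B_0|\le C\,e^{-2\tau/\gamma'}\qquad (\tau\ge\tau_0)
\]
for some $\gamma'\in(\gamma,1)$; with such a bound the integrand is dominated by $C\,e^{-2\tau(1/\gamma'-1)}$, which is integrable since $1/\gamma'-1>0$.

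For the decay bound I would slice by complex lines through the origin. For $\xi\in\mathbb{CP}^{n-1}$ the restriction $u_\xi(\zeta):=u(\zeta\xi)$ is subharmonic on a disc in $\mathbb{C}$, and outside a pluripolar set of directions one has $\nu(u_\xi,0)=\gamma$. For such a generic $\xi$, one can subtract off the point mass of $dd^c u_\xi$ at the origin to write $u_\xi(\zeta)=\gamma\log|\zeta|+\tilde u_\xi(\zeta)$ with $\tilde u_\xi$ subharmonic and $\nu(\tilde u_\xi,0)=0$. Since the Riesz mass of $\tilde u_\xi$ has no atom at $0$, standard one-variable potential-theoretic estimates, based on the Riesz representation and the log-convexity of spherical averages, yield a direction-wise bound $|\{\zeta\in\Delta(0,r_0):u_\xi(\zeta)<-\tau\}|\le C_\xi\,e^{-2\tau/\gamma'}$. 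Integrating over $\xi$ via the fibration $\mathbb{C}^n\setminus\{0\}\to\mathbb{CP}^{n-1}$ then upgrades this to the desired $n$-dimensional estimate on $B_0$.

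The main obstacle is the uniformity of the one-dimensional estimate in the direction $\xi$: the constant $C_\xi$ depends on how the Riesz mass $dd^c u_\xi/(2\pi)$ is distributed on the line $\ell_\xi$, and a priori this distribution varies measurably but unboundedly with $\xi$. Establishing a measurable and integrable bound on $C_\xi$ on a full-measure subset of $\mathbb{CP}^{n-1}$ typically invokes Hartogs' lemma for plurisubharmonic functions together with the log-convexity of the spherical averages $r\mapsto\sigma_{2n-1}(r)^{-1}\int_{|z|=r}u\,d\sigma$. An alternative approach that avoids slicing reduces, via the strong openness property (Theorem \ref{thm-SOC}), to the case of psh functions with analytic singularities, where Hironaka resolution turns the statement into a direct monomial computation showing that the log-canonical threshold exceeds the given weight.
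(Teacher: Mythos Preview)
The paper does not prove this proposition; it is quoted as a classical result of Skoda with a citation to \cite{Sko72} and no argument supplied. So there is no proof in the paper to compare against, and your proposal must stand on its own.

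Your sketch has the right architecture but a genuine gap at exactly the point you flag. The one-dimensional decay estimate $|\{\zeta\in\Delta(0,r_0):u_\xi(\zeta)<-\tau\}|\le C_\xi e^{-2\tau/\gamma'}$ that you invoke as ``standard'' is essentially the one-variable Skoda theorem itself (in dimension one the Lelong number equals the atom of $\tfrac{1}{2\pi}\Delta u_\xi$ at $0$, and the integrability of $e^{-2u_\xi}$ still needs the splitting of the Riesz measure into a small-mass piece near $0$ and a bounded-potential remainder). More seriously, even granting the line-wise bound, the uniformity of $C_\xi$ over $\xi\in\mathbb{CP}^{n-1}$ is not a cosmetic detail but the entire content of the theorem in higher dimensions; the hints you give (Hartogs' lemma, log-convexity of spherical means) do not assemble into an argument that controls the integral $\int_{\mathbb{CP}^{n-1}}C_\xi\,d\sigma(\xi)$. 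Skoda's original proof does proceed by a projection argument, but it handles this uniformity explicitly via careful potential-theoretic estimates on the trace measure, not by a soft appeal to slicing.

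Your alternative route is also problematic. Strong openness (Theorem~\ref{thm-SOC}) does not by itself reduce the question to analytic singularities; what does that is Demailly's approximation (Lemma~\ref{lem-Dem.approx}), which indeed gives $u_m\ge u-C_1/m$ with $\nu(u_m,0)\le\nu(u,0)$, so integrability of $e^{-2u_m}$ would imply that of $e^{-2u}$. However, the last step is not a ``direct monomial computation'': after a log resolution the Lelong number at $0$ is expressed as $\min_E a_E/m_E$ over exceptional divisors (with $m_E$ the multiplicity along $E$ of the maximal ideal), whereas the integrability condition is $a_E<1+b_E$ for every $E$ (with $b_E$ the discrepancy). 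Passing from the first inequality to the second is precisely the nontrivial content, and in any case this route invokes machinery (Ohsawa--Takegoshi, Hironaka) far heavier than the 1972 result you are trying to establish.
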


\bibliographystyle{alpha}
\bibliography{xbib}

\end{document}